\newcommand{\aexp}{a}
\newcommand{\bexp}{b}
\numberwithin{equation}{section}
\newtheorem{theorem}{Theorem}[section]
\newtheorem{lemma}[theorem]{Lemma}
\newtheorem{proposition}[theorem]{Proposition}
\newtheorem{observation}[theorem]{Observation}  
\newtheorem{remark}[theorem]{Remark}
\newtheorem{definition}[theorem]{Definition}
\theoremstyle{remark}
\newcommand{\C}{\mathbbm{C}}
\newcommand{\E}{\mathbbm{E}}
\newcommand{\N}{\mathbbm{N}}
\newcommand{\R}{\mathbbm{R}}
\newcommand{\eps}{\varepsilon}
\newcommand{\sph}{\mathrm{sph}}
\newcommand{\disk}{\mathrm{disk}}
\newcommand{\cMtwo}{\mathcal{M}_{2}^\mathrm{disk}}
\newcommand{\LF}{\mathrm{LF}}
\newcommand{\QD}{\mathrm{QD}}
\newcommand{\QS}{\mathrm{QS}}
\newcommand{\MD}{\mathrm{MD}}
\newcommand{\MS}{\mathrm{MS}}
\newcommand{\conf}{\operatorname{conf}}
\newcommand{\haar}{\mathbf{m}}
\newcommand{\lp}{\mathrm{loop}}
\newcommand{\cont}{\mathrm{Cont}}
\let\Re\undefined
\DeclareMathOperator{\Re}{Re}
\DeclareMathOperator{\SLE}{SLE}
\DeclareMathOperator{\CLE}{CLE}
\def\cS{\mathcal{S}}
\def\cM{\mathcal{M}}
\def\cF{\mathcal{F}}
\def\cE{\mathcal{E}}
\def\cD{\mathcal{D}}
\def\cC{\mathcal{C}}
\def\alb#1\ale{\begin{align*}#1\end{align*}}
\def\allb#1\alle{\begin{align}#1\end{align}}
\newcommand{\aryb}{\begin{eqnarray*}}
\newcommand{\arye}{\end{eqnarray*}}
\def\alb#1\ale{\begin{align*}#1\end{align*}}
\newcommand{\eqb}{\begin{equation}}
\newcommand{\eqe}{\end{equation}}
\newcommand{\eqbn}{\begin{equation*}}
\newcommand{\eqen}{\end{equation*}}
\newcommand{\BB}{\mathbbm}
\newcommand{\op}{\operatorname}
\newcommand{\frk}{\mathfrak}
\newcommand{\rta}{\rightarrow}
\newcommand{\wt}{\widetilde}
\newcommand{\wh}{\widehat}
\let\originalleft\left
\let\originalright\right
\renewcommand{\left}{\mathopen{}\mathclose\bgroup\originalleft}
\renewcommand{\right}{\aftergroup\egroup\originalright}
\DeclareMathAlphabet{\mathpzc}{OT1}{pzc}{m}{it}
\begin{document}

\title[The SLE loop via conformal welding]{The SLE loop via conformal welding of quantum disks}
\author{Morris Ang, Nina Holden, and Xin Sun} 

\maketitle

\begin{abstract}
We prove that the SLE$_\kappa$ loop measure arises naturally from the conformal welding of two $\gamma$-Liouville quantum gravity (LQG) disks for $\gamma^2=\kappa\in(0,4)$. The proof relies on our companion work on conformal welding of LQG disks and uses as an essential tool the concept of \emph{uniform embedding} of LQG surfaces. Combining our result with work of Gwynne and Miller, we get that random quadrangulations decorated by a self-avoiding polygon converge in the scaling limit to the LQG sphere decorated by the SLE$_{8/3}$ loop. Our result is also a key input to recent work of the first and third coauthors on the integrability of the conformal loop ensemble. Finally, our result can be viewed as the random counterpart of an action functional identity due to Viklund and Wang.
\end{abstract}

\section{Introduction}

The Schramm-Loewner evolution (SLE)  and Liouville quantum gravity (LQG) are central objects in random conformal  geometry. It was shown by Sheffield~\cite{shef-zipper} and Duplantier-Miller-Sheffield~\cite{wedges} that SLE curves arise as the interfaces of LQG surfaces under conformal welding. This phenomenon 
 is a cornerstone of the mating-of-trees framework~\cite{wedges} for the SLE/LQG coupling and is a fundamental input to the link between LQG and the scaling limits of random planar maps. See e.g.~\cite{lawler-book,ghs-mating-survey,gwynne-ams-survey, berestycki-lqg-notes,shef-ICM}  for an introduction to SLE, LQG, and their interactions.

Conformal welding results in~\cite{shef-zipper,wedges} mainly focus on infinite-volume LQG surfaces. Recently in~\cite{ahs-disk-welding}, we showed that the conformal welding of finite-volume LQG surfaces called two-pointed quantum disks can give rise to some canonical  variants of SLE curves with two marked points. In this paper, we show in Theorem~\ref{thm-loop} that when  conformally welding two quantum disks without marked points, the interface is another  canonical variant of SLE called the \emph{SLE loop}. Moreover, the resulting LQG surface is the so-called \emph{quantum sphere} (without marked point), which describes the scaling limit of classical random planar map models with spherical topology. For example, in the pure gravity case it corresponds to the Brownian map~\cite{legall-uniqueness,miermont-brownian-map,lqg-tbm1}.

As reviewed in Section~\ref{subsec:intro-loop}, the SLE loop is an important one-parameter family of conformally invariant random Jordan curves whose existence was conjectured by  Kontsevitch and Suhov \cite{ks-loop} and  established by Zhan~\cite{zhan-loop-measures}. In particular, the $\SLE_{8/3}$ loop introduced by Werner~\cite{werner-loops} describes the conjectural scaling limit of self-avoiding polygons on planar lattices. Our conformal welding result Theorem~\ref{thm-loop} combined with earlier work of Gwynne and Miller~\cite{gwynne-miller-gluing,gwynne-miller-saw} yields that uniform quadrangulations decorated by a self-avoiding loop converge to the Brownian map decorated by the SLE$_{8/3}$ loop; see Theorem~\ref{thm:saw-loop}. For $\kappa\in (8/3,4]$, the SLE$_\kappa$ loop is closely related to  the conformal loop ensemble (CLE) considered in~\cite{shef-cle,shef-werner-cle}.

We will state Theorems~\ref{thm-loop} and~\ref{thm:saw-loop} in Section~\ref{subsec:intro-loop} and~\ref{subsec:rmp}, respectively, modulo some background  material supplied in Section \ref{sec-prelim}.
Then we prove Theorems \ref{thm-loop} and~\ref{thm:saw-loop} in  Sections~\ref{sec-loop} and~\ref{subsec:rpm}, respectively.
Our proof of Theorem~\ref{thm-loop} builds on the conformal welding result in~\cite{ahs-disk-welding} and the Liouville field description of the quantum surfaces  in~\cite{AHS-SLE-integrability}, which will be recalled in Section~\ref{sec-prelim} as well.

\subsection{The SLE loop via conformal welding}\label{subsec:intro-loop}
Kontsevitch and Suhov \cite{ks-loop}, inspired by Malliavin \cite{Malliavin}, formulated a natural conformal restriction covariance property for loop measures, and they conjectured that there is a unique (up to multiplicative factor) one-parameter family of loop measures satisfying this property. We call loop measures satisfying this property a \emph{Malliavin-Kontsevitch-Suhov (MKS) loop measure} and we index the measures by $\kappa\in(0,4]$.

Werner \cite{werner-loops} proved the existence and uniqueness of the MKS loop measure for $\kappa=8/3$ and constructed the loop measure via the boundaries of  Brownian loops.  Kemppainen and Werner \cite{werner-sphere-cle} constructed an MKS loop  measure  for $\kappa\in(8/3,4]$  
using the density measure of a nested simple CLE. 
For $\kappa=2$, Benoist and Dub\'edat~\cite{benoist-sle2} proved that a measure constructed in  \cite{kk-curves-laplacian} is an MKS loop measure.  Finally, Zhan~\cite{zhan-loop-measures} constructed an MKS loop measure for all $\kappa\in(0,4]$ via SLE$_\kappa$ equipped with its natural parametrization and also extended the construction to $\kappa\in(4,8)$.
We denote Zhan's MKS loop measure by $\SLE_{\kappa}^{\op{loop}}$. See Section \ref{sec-zhan-loop-measure} for the precise definition. 
For $\kappa\in (8/3,4]$, the loop measures of Zhan and Kemppainen-Werner  agree; see~\cite[Section 2.3]{AS-CLE}.
We emphasize that $\SLE_{\kappa}^{\op{loop}}$ is an infinite measure for each $\kappa\in(0,4]$.

{For each $\gamma\in(0,2)$ there is a natural infinite measure on LQG surfaces with spherical topology called the \emph{unmarked quantum sphere}. We denote this measure by $\QS$ and refer to Section \ref{sec-prelim} for the precise definition of both this measure and of the other objects we introduce in this and the next paragraph. Let $\wh \C = \C \cup \{\infty\}$ be the Riemann sphere. Suppose $h$ is a random field on $ \C$ such that  
the distribution of $(\wh \C,h)$ viewed as a quantum surface is $\QS$.
Let  $\eta$  be a sample\footnote{We will use the language of probability in the setting of non-probability measures --- see Section~\ref{sec-prelim} for precise definitions.} of $\SLE^{\mathrm{loop}}_\kappa$ independent of $h$ for $\kappa=\gamma^2\in(0,4]$. It is known that $\SLE^{\mathrm{loop}}_\kappa$ is invariant under M\"{o}bius transforms \cite[Theorem 4.2]{zhan-loop-measures}. Therefore, although the distribution of $h$ is not uniquely specified,
as a curve-decorated quantum surface, the distribution of $(\wh \C,h,\eta)$ is uniquely defined. 
We denote this loop-decorated quantum surface by $\QS\otimes \SLE_{\kappa}^{\op{loop}}$ and call it the \emph{MKS-loop-decorated quantum sphere} with parameter $\gamma$; see Section~\ref{sec-loop} for a precise definition.

For each $\gamma\in(0,2)$ there is also a natural infinite measure on LQG surfaces with disk topology called the \emph{unmarked quantum disk}. We denote this measure by $\QD$. Let $\QD(\ell)$ be the disintegration of $\QD$ over its boundary length, namely  $\QD=\int_0^\infty\QD(\ell) \, d \ell$. For $\ell > 0$, let $(\cD_1, \cD_2)$ be a sample from $\QD(\ell)\times\QD(\ell)$, so $\cD_1$ and $\cD_2$ have boundary length $\ell$.
 Let $\mathrm{Weld}(\cD_1,\cD_2)$ be the  curve-decorated quantum surface obtained by conformally welding $\cD_1$ and $\cD_2$ along their boundaries such that a uniformly sampled point on the boundary of $\cD_1$ is identified with a uniformly sampled point on the boundary of $\cD_2$.
We denote the  distribution of $\mathrm{Weld}(\cD_1,\cD_2)$ by $\mathrm{Weld}(\QD(\ell),\QD(\ell))$ and define $\mathrm{Weld}(\QD,\QD)=\int_0^\infty \ell\cdot  \mathrm{Weld}(\QD(\ell),\QD(\ell)) \, d\ell$; 
see Section~\ref{subsec:welding} for more details.
\begin{theorem}\label{thm-loop}
For $\gamma\in (0,2)$ and $\kappa = \gamma^2$, we have $\QS\otimes \SLE_{\kappa}^{\op{loop}} = C\mathrm{Weld}(\QD,\QD)$ for some  constant $C \in (0, \infty)$. 	
\end{theorem}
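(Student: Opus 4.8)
The plan is to upgrade Theorem~\ref{thm-loop} to a statement about \emph{two-pointed} surfaces, with both marked points lying on the interface, and then to recognize $\mathrm{Weld}(\QD,\QD)$ as the outcome of two successive arc-weldings, each governed by the conformal welding theorem of \cite{ahs-disk-welding}. First I would note that the loop of $\mathrm{Weld}(\QD(\ell),\QD(\ell))$ carries the quantum length measure inherited from the common boundary length $\ell$, and that the welding already distinguishes one point $z$ on it (the two identified uniform points). Sample a further point $w$ on the loop from its quantum length measure; on the left-hand side, weight $\QS\otimes\SLE_{\kappa}^{\op{loop}}$ by the squared quantum length of the loop and sample $z,w$ in the same way. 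Since the loop has a.s.\ positive and finite quantum length, it suffices to prove this two-pointed identity: forgetting $z,w$ and disintegrating over the loop length recovers Theorem~\ref{thm-loop}.

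Next I would decompose: the points $z,w$ cut the loop into two arcs and the sphere into two quantum disks $\cD_1,\cD_2$, each of which is a two-pointed quantum disk carrying both arcs on its boundary. On the welding side this is transparent --- conditionally on $\ell$, the pair $(\cD_1,\cD_2)$ consists of two independent samples from the law obtained from $\QD(\ell)$ by marking two uniform boundary points, and the two arcs are exactly the two welding seams. So the claim reduces to: welding two such two-pointed quantum disks first along one pair of corresponding arcs and then along the other yields the two-pointed quantum sphere decorated by $\SLE_{\kappa}^{\op{loop}}$ through $z,w$, up to a finite positive constant. The first of these weldings is precisely (a rescaling of) the conformal welding theorem of \cite{ahs-disk-welding}: the output is a two-pointed quantum disk decorated by an $\SLE_\kappa$-type curve joining its two marked points, with matching boundary lengths and with the field on the welded disk described by the Liouville field formulas of \cite{AHS-SLE-integrability}.

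The heart of the argument is the second welding, which glues the two remaining outer boundary arcs of this disk to each other. I would need to show that (i) the resulting quantum surface is $\QS$ with two marked points, up to a constant, and (ii) the union of the two welding seams is $\SLE_{\kappa}^{\op{loop}}$ through those two points. For (i), the key tool is the \emph{uniform embedding} of LQG surfaces, which lets one phrase the equality of quantum-surface laws as an equality of explicit random fields on a fixed domain; pushing the Liouville field computation of \cite{AHS-SLE-integrability} through the welding and comparing with the Liouville field description of the two-pointed quantum sphere both identifies the law and produces the constant $C$. For (ii), conditionally on the surface the law of the interface pair is conformally natural (independent of the chosen embedding), hence a M\"obius-invariant loop measure through two marked points; the conformal restriction and domain Markov compatibility built into the two successive weldings, together with the uniqueness of the MKS loop measure recalled in Section~\ref{sec-prelim} (or, equivalently, matching its arcs against the $\SLE_\kappa(\rho)$ laws coming from \cite{ahs-disk-welding}), forces it to be $\SLE_{\kappa}^{\op{loop}}$. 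Combining (i) and (ii) and then forgetting the marked points gives Theorem~\ref{thm-loop}.

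I expect step (i) of the second welding to be the main obstacle. After the first welding the sphere is assembled by a \emph{self}-welding of a single disk's boundary, it carries no intrinsic marked points to anchor a canonical embedding, and conformal welding theorems generally pin down only the surface law or only the interface law --- so one genuinely needs the uniform embedding device to compare the random fields, and the Liouville-field machinery of \cite{AHS-SLE-integrability} to evaluate the welded field's law and extract $C$. The remaining issues --- bookkeeping the various boundary-length parameters, and checking that the ``sample two points on the loop'' reduction is reversible --- are routine but need care.
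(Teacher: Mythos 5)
Your overall skeleton is close to the paper's: both arguments root the identity at marked points on the interface, feed the welding side into the companion result of \cite{ahs-disk-welding}, and compare field laws via uniform embedding and Liouville fields. But there is a genuine gap at what is really the heart of the theorem: the identification of the \emph{loop} law. First, a simplification you miss: you do not need to re-derive your ``second (self-)welding'' --- the cited result (Proposition~\ref{prop:AHS-orig}, a special case of \cite[Theorem 2.4]{ahs-disk-welding}) already treats both arcs at once, saying that cutting $\cM_2^\sph(4)$ along a two-sided whole-plane SLE yields two independent samples of $\cM_2^\disk(2)$ with matched arc lengths; so the welding side of your two-pointed identity is directly $\cM_2^\sph(4)$ decorated by $\SLE_\kappa^{p\rightleftharpoons q}$. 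The real problem is converting between this \emph{rooted} object and $\QS\otimes\SLE_\kappa^{\op{loop}}$ weighted by the squared loop length with two quantum-length-typical points marked on the loop. Your proposed tools for this step --- M\"obius invariance, ``conformal restriction and domain Markov compatibility,'' and uniqueness of the MKS loop measure --- do not suffice: MKS uniqueness is precisely the open Kontsevich--Suhov conjecture, known only at $\kappa=8/3$ \cite{werner-loops,ks-loop}, and even granting it you would still lack the quantitative rooting relation. What is actually needed is Zhan's two-point rooting identity \eqref{eq:zhan-loop-pts}, stating that $\SLE_\kappa^{\op{loop}}$ with two points sampled from the Minkowski content equals $|p-q|^{\gamma^2/4-2}\,\SLE_\kappa^{p\rightleftharpoons q}(d\eta)\,d^2p\,d^2q$, combined with the fact that quantum length on the loop is a GMC over the Minkowski content \cite{benoist-lqg-chaos}, so that sampling $p,q$ from quantum length corresponds via Girsanov to adding $\frac\gamma2$-insertions (Lemma~\ref{lem:cont}) whose covariance factor produces exactly the exponent $\frac{\gamma^2}4-2$ matching Zhan's $-2(2-d)$ (the KPZ relation). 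None of this appears in your plan, and without it the step ``forgetting $z,w$ recovers the theorem'' cannot be justified: forgetting the roots requires knowing the conditional law of the roots given the loop, which is exactly what \eqref{eq:zhan-loop-pts} supplies.

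A secondary but necessary repair: marking only the two loop points does not kill the M\"obius symmetry (two points on $\wh\C$ have a nontrivial stabilizer), so the uniform-embedding comparison and the final ``evaluate at fixed points'' disintegration do not close as stated. The paper additionally weights by quantum area and marks an area-typical third point, yielding $\cM_{2,\bullet}^\sph(4)$, for which an explicit Liouville-field description with three insertions is available (Proposition~\ref{prop-3pt-QS-LF}); this fully fixes the gauge and makes the comparison of $\haar_{\wh\C}\ltimes$-laws (as in Lemma~\ref{lem:M3-haar} and Proposition~\ref{prop:M3}) a computation. Your reduction via $L^2$-weighting and quantum typicality of the $\cM_2^\disk(2)$ marked points is sound, but the proof needs the third marked point and, above all, the Zhan rooting identity plus the GMC-over-Minkowski-content Girsanov step.
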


The proof of Theorem~\ref{thm-loop} builds on a  welding result from~\cite{ahs-disk-welding} which says that the conformal welding of two quantum disks, each marked with two points sampled independently from the LQG boundary measure, gives a quantum sphere with two special singularities decorated with a so-called two-sided whole plane $\SLE_\kappa$. 
Given this result and the construction of Zhan's MKS loop measure, Theorem \ref{thm-loop} seems plausible. To explain the factor of $\ell$ in the definition of $\mathrm{Weld}(\QD, \QD)$, we appeal to the following intuition from the discrete: if we have two polygons with $\ell$ edges, there are $\ell$ different ways to glue them into a sphere with a self-avoiding loop. The rigorous proof of Theorem~\ref{thm-loop} relies on the idea of uniform embedding introduced in \cite{AHS-SLE-integrability} and explained in Section~\ref{subsec-haar}. The uniform embedding of a quantum surface is a particular random embedding where the fields can be described via Liouville conformal field theory (LCFT) as considered in~\cite{dkrv-lqg-sphere,hrv-disk}. Moreover, it is especially convenient  to work with when  adding or removing marked points on the surface.

Based on the integrability of LQG from mating-of-trees~\cite{wedges} and LCFT (see e.g.~\cite{krv-dozz,ARS-FZZ,rz-boundary}),  it was shown in~\cite{AHS-SLE-integrability} that conformal welding  can be applied to establishing integrability results for the SLE interfaces involved. Theorem~\ref{thm-loop} serves as the starting point of this application to the $\SLE$ loop and CLE. In~\cite{AS-CLE}, this approach  was used to obtain the 3-point correlation function for the nesting statistics and the electrical thickness  of simple $\CLE$. In~\cite{ARS-annulus}, it was used to compute the annulus partition function of the $\SLE_{8/3}$ loop.
In a forthcoming work of the first and third coauthors, it will be used to compute the renormalized probability that three given points are  close to the same 
CLE loop on the sphere.

By a limiting argument, Theorem~\ref{thm-loop} can be naturally extended to $\kappa=4$ using available conformal welding results for $\SLE_4$~\cite{hp-welding,mmq-welding} but since the conformal removability of $\SLE_4$ is not settled, the result would be less definite. Therefore  we do not pursue this extension.

Viklund and Wang \cite{viklund-wang-welding} proved a beautiful identity between the Loewner energy of a Jordan curve $\eta$ on a sphere and the difference between the Dirichlet energy of a function $\varphi$ on the sphere and the sum of the Dirichlet energies of $\varphi$ restricted to each component of $\BB S^2\setminus\eta$ after applying a uniformizing map.  These quantities naturally arise from the large deviation of  SLE and LCFT; see~\cite{wang-LDP-sle} and~\cite{lrv-semiclassical}.
In particular, the identity can be viewed as a relation between the large deviation rate functions for an SLE loop measure, the LCFT on the sphere,  and the LCFT on the disk. See \cite[Section 1.3]{viklund-wang-welding} for a discussion of this.
Theorem~\ref{thm-loop} can be viewed as a quantum  version of this identify.

\subsection{The scaling limit of random planar maps decorated by self-avoiding loop}\label{subsec:rmp}

It is believed that the MKS loop with $\kappa=8/3$, namely Werner's loop measure~\cite{werner-loops}, 
is the scaling limit of the critical self-avoiding loop on a regular planar lattice. We will argue that this result holds in an annealed sense in the setting of planar maps. Namely, the critical Boltzmann measure on quadrangulations decorated with a self-avoiding loop converges to the MKS-loop-decorated quantum sphere with $\gamma=\sqrt{8/3}$ as curve-decorated metric measure spaces. The measure is called critical since the weight assigned to a loop-decorated quadrangulation has been tuned precisely so that the number of vertices of the quadrangulation and the length of the loop have a power-law behavior.

To state this result we will first introduce some notation; see Section \ref{subsec:rpm} for more precise definitions. Let $\MS^{n}\otimes \op{SAW}^n$ denote the measure on pairs $(M,\eta)$ where $M$ is a quadrangulation, $\eta$  is a self-avoiding loop on $M$, and a pair $(M,\eta)$ has weight 
$n^{2.5}12^{-\#\cF(M)}54^{-\#\eta},$
where $\#\cF(M)$ is the number of faces of $M$ and $\#\eta$ is the number of edges of $\eta$. Note that the parameter $n$ does not correspond to any quantity in the quadrangulation beyond the weights we use to define MS$^n$ and as a scaling factor for distances and areas (see Section \ref{subsec:rpm} for the latter). We let $\QS\otimes \SLE_{8/3}^{\op{loop}}$ be as in Theorem~\ref{thm-loop}
with $\gamma^2=\kappa=8/3$. 
As we will explain in Section \ref{subsec:rpm}, samples from
$\MS^{n}\otimes \op{SAW}^n$ and $\QS\otimes \SLE_{8/3}^{\op{loop}}$ can be viewed
as loop-decorated metric measure spaces. In this setting the natural topology for weak convergence  is the Gromov-Hausdorff-Prokhorov-uniform (GHPU) topology; see Section \ref{sec:ghpu} for a precise definition. For a loop-decorated metric measure space and $c\in(0,1)$ we let $A(c)$ denote the event that the length of the loop is in $[c,c^{-1}]$. We use the notation $M|_{A(c)}$ to stand for the restriction of the measure $M$ to the event $A(c)$, and
use the symbol $\Rightarrow$ to indicate weak convergence of finite measures.
\begin{theorem}\label{thm:saw-loop}
	There exists $c_0>0$ such that for all $c\in(0,1)$, $$\MS^{n}\otimes \op{SAW}^n|_{A(c)} \Rightarrow c_0\cdot\QS\otimes \SLE_{8/3}^{\op{loop}}|_{A(c)} \quad \textrm{as }n\rta\infty $$ 
	with respect to the Gromov-Hausdorff-Prokhorov-uniform topology.
\end{theorem}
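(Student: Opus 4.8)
\emph{Proof idea.} The plan is to cut a loop-decorated quadrangulation along its loop, recognize the two pieces as quadrangulated disks that converge to quantum disks, use continuity of the gluing operation in the GHPU topology, and then invoke Theorem~\ref{thm-loop} to identify the glued continuum object. The first step is the discrete analogue of the decomposition $\mathrm{Weld}(\QD,\QD)=\int_0^\infty\ell\cdot\mathrm{Weld}(\QD(\ell),\QD(\ell))\,d\ell$. Cutting a quadrangulation $M$ of the sphere along a self-avoiding loop $\eta$ with $\ell$ edges produces a pair of quadrangulated disks of perimeter $\ell$, and the decorated map $(M,\eta)$ is recovered from this pair together with one of the $\ell$ cyclic identifications of the two boundaries. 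Since $\#\cF$ is additive under this cutting, summing $12^{-\#\cF}$ over the two disks produces the square of the Boltzmann disk partition function $Z_\ell$, the sum of $12^{-\#\cF(D)}$ over quadrangulated disks $D$ of perimeter $\ell$. Incorporating the choices of a root and a distinguished side, this realizes $\MS^n\otimes\op{SAW}^n$ as a weighted sum over $\ell$ of the laws of two independent Boltzmann disks of perimeter $\ell$ glued along their boundaries with a uniformly chosen cyclic offset, the factor $\ell$ coming precisely from the number of such offsets; the boundary weight $54^{-\ell}$ is the one that cancels the exponential growth of $Z_\ell^2$, so that the mass carried by each value of $\ell$ is asymptotically a power of $\ell$.

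The second step imports the scaling limit of the pieces. For fixed $\ell>0$, a Boltzmann quadrangulated disk of perimeter of order $\sqrt n\,\ell$, with boundary length, area and distances rescaled by the appropriate powers of $n$ and with a uniform marked boundary point, converges in the GHPU topology to the $\sqrt{8/3}$-quantum disk $\QD(\ell)$ with a uniform marked boundary point; this combines the known convergence of Boltzmann quadrangulated disks to the Brownian disk with the identification of the Brownian disk and the quantum disk as metric measure spaces~\cite{lqg-tbm1,gwynne-miller-saw}. The convergence holds jointly for the two independent disks and, via local-limit-theorem estimates for $Z_\ell$, uniformly for $\ell$ in compact subsets of $(0,\infty)$; tracking these estimates together with the choice of the prefactor $n^{2.5}$ shows moreover that after the rescaling the mass that $\MS^n\otimes\op{SAW}^n$ assigns to loops of rescaled length near $\ell$ converges, on $\ell\in[c,c^{-1}]$, to $c_0$ times the corresponding mass of $\int_0^\infty\ell\cdot\mathrm{Weld}(\QD(\ell),\QD(\ell))\,d\ell$, for a constant $c_0>0$ independent of $c$.

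The third and main step is that the gluing operation is continuous in GHPU and matches the continuum conformal welding. By the metric-gluing results of Gwynne and Miller~\cite{gwynne-miller-gluing,gwynne-miller-saw}, the map sending a pair of metric measure disks of equal boundary length, each with a marked boundary point, to the loop-decorated metric measure space obtained by isometrically identifying the two boundaries starting from the marked points is continuous for the GHPU topology; moreover the glued space is well defined and coincides, as a loop-decorated metric measure space, with the curve-decorated quantum surface $\mathrm{Weld}(\QD(\ell),\QD(\ell))$ carrying its LQG metric, the gluing interface of the discrete disks converging as a curve to the welding interface --- this last point using the conformal removability of $\SLE_{8/3}$. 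Composing this continuity with the convergence of the pieces and integrating over $\ell\in[c,c^{-1}]$ --- justified because $\mathrm{Weld}(\QD,\QD)|_{A(c)}$ has finite total mass and the fixed-$\ell$ convergence is uniform on this range --- gives $\MS^n\otimes\op{SAW}^n|_{A(c)}\Rightarrow c_0\int_{[c,c^{-1}]}\ell\cdot\mathrm{Weld}(\QD(\ell),\QD(\ell))\,d\ell=c_0\,\mathrm{Weld}(\QD,\QD)|_{A(c)}$. By Theorem~\ref{thm-loop} the right-hand side equals $c_0C^{-1}\cdot\QS\otimes\SLE_{8/3}^{\op{loop}}|_{A(c)}$, and relabelling $c_0C^{-1}$ as $c_0$ gives the statement.

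The main obstacle is the third step: transferring the convergence of the discrete gluing to the continuum conformal welding within the GHPU topology. Conformal welding is \emph{a priori} only a measurable operation, and making it continuous requires passing through the metric description of the welded surface and the conformal removability of $\SLE_{8/3}$; this is the substantive input we take from~\cite{gwynne-miller-gluing,gwynne-miller-saw}, with Theorem~\ref{thm-loop} entering afterwards only to name the welded object. A secondary technical point is the uniformity in the boundary-length parameter, together with the tightness and absence of escaping mass needed to upgrade fixed-$\ell$ convergence of the pieces to weak convergence of the finite measures $\MS^n\otimes\op{SAW}^n|_{A(c)}$; this rests on sufficiently precise control of the disk partition function $Z_\ell$ and its dependence on $\ell$.
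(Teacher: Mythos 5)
Your proposal is correct and follows essentially the same route as the paper: cut/glue bijection with the factor $\ell$ of cyclic offsets (the paper's Observation~\ref{obs:bijection}), GHPU convergence of the glued Boltzmann disks to $\mathrm{Weld}(\QD(\ell)^\#,\QD(\ell)^\#)$ via the Gwynne--Miller results (the paper's Theorem~\ref{thm:gwynne-miller}), partition-function asymptotics to turn the sum over perimeters into $\int \ell\cdot\mathrm{Weld}(\QD(\ell),\QD(\ell))\,d\ell$ on $[c,c^{-1}]$, and finally Theorem~\ref{thm-loop} to identify the limit. The only cosmetic difference is that you phrase the key external input as continuity of the gluing map plus convergence of the pieces, whereas the paper cites directly the convergence of the glued discrete disks to the conformally welded quantum disks, which packages the same removability/metric-gluing content.
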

Here, we restrict to $A(c)$ to make all measures in Theorem~\ref{thm:saw-loop} finite. 
Gwynne and Miller proved the counterpart of the theorem in the setting of chordal self-avoiding paths on half-planar quadrangulations \cite{gwynne-miller-saw,gwynne-miller-gluing}, and results from their papers are key inputs to our proof. The other inputs are Theorem \ref{thm-loop} and an exact discrete counterpart (also observed in \cite{gwynne-miller-simple-quad,caraceni-curien-saw}) of Theorem \ref{thm-loop} given in Observation~\ref{obs:bijection}. 

It is a classical result that the quantum sphere for $\gamma=\sqrt{8/3}$ (also known as the Brownian map) arises as the scaling limit of uniformly sampled planar maps \cite{legall-uniqueness,miermont-brownian-map}. By contrast, our Theorem \ref{thm:saw-loop} gives the analogous result for a family of \emph{non-uniform} planar maps in the sense that two planar maps of the same size do not have the same probability of being sampled. Indeed, if $(M,\eta)$ is sampled from $\MS^{n}\otimes \op{SAW}^n$ then the marginal law of $M$ has been reweighted according to the (weighted) number of self-avoiding loops that the map admits. On the other hand, Theorem~\ref{thm:saw-loop} means that  this reweighting does not change the scaling limit of the planar map.

For concreteness Theorem \ref{thm:saw-loop} is stated and proved for quadrangulations, but we remark that the result also holds for random triangulations building on \cite{aasw-type2}.\footnote{Ewain Gwynne has confirmed in private communication that the techniques in his self-avoiding walk papers with Jason Miller also work for triangulations.} By universality we expect that Theorem \ref{thm:saw-loop} also extends to other families of planar maps decorated by a self-avoiding loop.

\medskip

\noindent\textbf{Acknowledgements.} 
We are in debt to Yilin Wang for her important insight on SLE loop.
In our opinion, her contribution to Theorem~\ref{thm-loop}  is as much as ours.
We are also grateful to  Steffen Rohde, Scott Sheffield, and  Dapeng Zhan for helpful discussions. 
We thank two anonymous referees for helpful comments on an
earlier version of this paper.
M.A. was supported by the Simons Foundation as a Junior Fellow at the Simons Society of Fellows, and partially supported by NSF grant DMS-1712862. 
N.H.\ was supported by Dr.\ Max R\"ossler, the Walter Haefner Foundation, and the ETH Z\"urich
Foundation, along with grant 175505 of the Swiss National Science Foundation.
X.S.\ was supported by  the Simons Foundation as a Junior Fellow at the Simons Society of Fellows,  and by the  NSF grant DMS-2027986 and the Career award 2046514.

\section{Preliminaries}\label{sec-prelim}

In this paper we use the language of probability theory in the setting of non-probability measures. If $M$ is a measure on a measurable space $(\Omega, \cF)$ and $X$ is an $\cF$-measurable function, we call the pushforward measure $M_X = X_* M$ the \emph{law} of $X$, and say that $X$ is \emph{sampled} from $M_X$. For a finite measure $M$, we denote its total mass by $|M|$, and write $M^\# = |M|^{-1} M$ for the probability measure proportional to $M$. We now provide background for the various objects relevant to Theorem~\ref{thm-loop}.

\subsection{Liouville quantum gravity}\label{subsec-lqg}

We introduce the \emph{Gaussian free field} (GFF) on various domains. Let $\cS= \R \times (0,\pi)$ be the infinite strip and let $m$ be the uniform probability measure on $\{0\} \times (0,\pi)$. The Dirichlet inner product is given by $\langle f,g\rangle_\nabla = (2\pi)^{-1} \int_\cS \nabla f \cdot \nabla g$. Consider the space of smooth functions $f$ on $\cS$ with $\langle f, f\rangle_\nabla < \infty$ and $\int_\cS f \, dm = 0$. Let $H(\cS)$ be its Hilbert space closure with respect to $\langle \cdot, \cdot \rangle_\nabla$. Choose an orthonormal basis $(f_i)$ of $H(\cS)$ and let $(\xi_i)$ be independent standard Gaussian random variables. Then the summation 
\[h_\cS := \sum_{i=1}^\infty \xi_i f_i \]
converges in the space of distributions, and we call $h_\cS$ a GFF on $\cS$ normalized so that $\int_\cS h_\cS \, dm= 0$ \cite[Section 4.1.4]{wedges}. 

Throughout this paper, we fix a choice of LQG parameter $\gamma \in (0,2)$.
Suppose $\phi = h_\cS+ g$ where $g$ is a (possibly random) function on $\cS \cup \partial \cS$ which is continuous at all but finitely many points. For $z \in \cS \cup \partial \cS$ let $\phi_\eps(z)$ be the average of $\phi$ on $\partial B_\eps(z) \cap \cS$, and define $\mu_\phi^\eps(d^2z) := \eps^{\gamma^2/2} e^{\gamma \phi_\eps(z)} \, d^2z$ where $d^2z$ is the Lebesgue measure on $\cS$. The \emph{quantum area measure} $\mu_\phi$ is defined as the almost sure weak limit $\lim_{\eps \to 0} \mu_\phi^\eps$ \cite{shef-kpz, sw-coord}. Similarly, we can define the \emph{quantum boundary length measure} $\nu_\phi:= \lim_{\eps \to 0} \eps^{\gamma^2/4} e^{\frac\gamma2 \phi_\eps(x)} \, dx$ where $dx$ is the Lebesgue measure on $\partial \cS$. 

Suppose $f: D \to \wt D$ is a conformal map between domains $D, \wt D$. For a distribution $\phi$ on $D$, define
\eqb \label{eq:coord}
f \bullet_\gamma \phi = \phi \circ f^{-1} + Q \log |(f^{-1})'|, \qquad \qquad Q = \frac\gamma2 + \frac2\gamma.
\eqe
Consider the set of pairs $(D, \phi)$ where $D \subset \C$ is open and $\phi$ is a distribution on $D$. A \emph{quantum surface} is an equivalence class of pairs $(D,\phi)$ where $(D,\phi) \sim_\gamma (\wt D, \wt h)$ if there is a conformal map $f: D \to \wt D$ such that $\wt \phi = f \bullet_\gamma \phi$, and an \emph{embedding} of the quantum surface is a choice of $(D,h)$ from the equivalence class. 
This definition is natural because the quantum area and boundary length measures are consistent across elements of an equivalence class: if $(\cS, \phi)\sim_\gamma (\cS, \wt \phi)$ and $f: \cS \to \cS$ satisfies $\wt\phi = \phi \circ f^{-1} + Q \log |(f^{-1})'|$, then $\mu_{\wt \phi} = f_* \mu_\phi$ and $\nu_{\wt \phi} = f_* \nu_{\phi}$ \cite{shef-kpz}.

More generally, a \emph{loop-decorated quantum surface with $m$ marked points} is an equivalence class of tuples $(D, \phi, z_1, \dots, z_m, \eta)$ with $z_1, \dots, z_m \in D \cup \partial D$, $\eta: \mathbb S^1_\ell \to D$ is continuous (i.e., $\eta$ is a loop on $D$), and $\mathbb S^1_\ell$ is a circle of length $\ell>0$. We say $(D, \phi, z_1, \dots, z_m, \eta)\sim_\gamma(\wt D, \wt \phi, \wt z_1, \dots, \wt z_m, \wt\eta)$ if $ \wt \phi = f\bullet_\gamma \phi$, $\wt z_i = f(z_i)$ for all $i$, and $\wt \eta(t) = f(\eta(t+r))$ for some $r \in [0,\ell)$ and all $t \in [0,\ell)$, where we represent $\mathbb S_\ell^1$ as the interval $[0,\ell]$ with endpoints identified. We view $\eta$ as a parametrized and oriented loop with no distinguished starting point. We can similarly define a quantum surface with just $m$ marked points (and no loop).

Now, we recall the radial-lateral decomposition of $h_\cS$. Let $H_1(\cS)$ (resp.\ $H_2(\cS)$) be the subspace of $H(\cS)$ comprising functions which are constant (resp.\ have average zero) on $\{t\} \times (0,\pi)$ for each $t \in \R$. This yields an orthogonal decomposition $H(\cS) = H_1(\cS) \oplus H_2(\cS)$. 

\begin{definition}\label{def-thick-disk}
	Let $W \geq \frac{\gamma^2}2$ and $\beta = Q + \frac\gamma2 - \frac W{\gamma^2}$. Let 
\[Y_t =
\left\{
\begin{array}{ll}
	B_{2t} - (Q -\beta)t  & \mbox{if } t \geq 0 \\
	\wt B_{-2t} +(Q-\beta) t & \mbox{if } t < 0
\end{array}
\right. , \]
where $(B_s)_{s \geq 0}$ is a standard Brownian motion  conditioned on $B_{2s} - (Q-\beta)s<0$ for all $s>0$, and $(\wt B_s)_{s \geq 0}$ is an independent copy of $(B_s)_{s \geq 0}$. Let $h^1(z) = Y_{\Re z}$ and let $h^2_\cS$ be the projection of an independent GFF $h_\cS$ to $H_2(\cS)$. Set $\wh h = h^1 + h^2_\cS$. Sample an independent real number $\mathbf c$ from the measure $[\frac\gamma2e^{(\beta - Q)c} \, dc]$ on $\R$, and let $\phi = \wh h + \mathbf c$. Let $\cM_2^\disk(W)$ be the infinite measure describing the law of $(\cS, \phi, -\infty, +\infty)/{\sim_\gamma}$. We call a sample from 	$\cM_2^\disk(W)$ a \emph{quantum disk with two insertions of weight $W$}. 
\end{definition}
Weight $W$ quantum disks have two marked boundary points.
The case $W=2$ is special since these two points are quantum typical in the following sense.

\begin{proposition}[{\cite[Proposition A.8]{wedges}}]\label{prop-QD-typical}
	Sample $(\cS, \phi, -\infty, +\infty)/{\sim_\gamma}$ from $\cM_2^\disk(2)$, then sample independent points $x_1, x_2 \in \partial \cS$ from the probability measure proportional to $\nu_\phi$. Then the law of $(\cS, \phi, x_1, x_2)/{\sim_\gamma}$ is $\cM_2^\disk(2)$. 
\end{proposition}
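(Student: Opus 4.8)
The proposition says that the two marked points of a weight-$2$ quantum disk are, jointly, indistinguishable from two independent $\nu_\phi$-typical boundary points. The plan is to deduce it from the decomposition
\begin{equation}\label{eq-QD2-decomp}
	\cM_2^\disk(2) \;=\; C\cdot \mathbf D(d\psi)\,\nu_\psi(dx_1)\,\nu_\psi(dx_2),
\end{equation}
where $\mathbf D$ is a $\sigma$-finite measure on \emph{unmarked} quantum disks (up to a constant, the unmarked quantum disk $\QD$, equivalently the insertion-free Liouville field on $\bbH$), the points $x_1,x_2$ are sampled from the \emph{unnormalized} quantum boundary measure $\nu_\psi$, and the right-hand side is read as a measure on $2$-pointed quantum surfaces $(\psi,x_1,x_2)/{\sim_\gamma}$. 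Granting \eqref{eq-QD2-decomp}, the proposition follows from a short mass-balance computation: representing $(\cS,\phi,-\infty,+\infty)/{\sim_\gamma}$ via the right side of \eqref{eq-QD2-decomp} with its two marked points renamed $A,B$, the joint law of $(\psi,A,B,x_1,x_2)$, with $x_1,x_2$ sampled independently from $\nu_\psi^\#$, is $C\,\mathbf D(d\psi)\,\nu_\psi(dA)\,\nu_\psi(dB)\,\nu_\psi(\partial)^{-1}\nu_\psi(dx_1)\,\nu_\psi(\partial)^{-1}\nu_\psi(dx_2)$; since $x_1,x_2$ are conditionally independent of $A,B$ given $\psi$, forgetting $A$ and $B$ integrates $\nu_\psi(dA)\,\nu_\psi(dB)$ to the factor $\nu_\psi(\partial)^2$, which cancels the two normalizations and returns $C\,\mathbf D(d\psi)\,\nu_\psi(dx_1)\,\nu_\psi(dx_2)$, i.e.\ $\cM_2^\disk(2)$ in the variables $(\cS,\phi,x_1,x_2)/{\sim_\gamma}$.

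It thus remains to establish \eqref{eq-QD2-decomp}, which is the real content. The mechanism is that weighting the law of a field $\phi$ by $\nu_\phi(dx)$ at a boundary point $x$ corresponds to inserting a logarithmic singularity of weight $\gamma$ there, and that the weight-$2$ disk is exactly the insertion-free disk carrying two such insertions: the general weight-$W$ disk has boundary insertion weight $\beta = Q + \frac\gamma2 - \frac W\gamma$, which equals $\gamma$ precisely when $W = 2$. Concretely I would (i) rewrite $\cM_2^\disk(2)$, starting from the radial--lateral description in Definition~\ref{def-thick-disk}, as a constant times the strip Liouville field $\LF_{\cS}^{(\gamma,+\infty),(\gamma,-\infty)}$ with two boundary $\gamma$-insertions at $\pm\infty$, which amounts to recognizing the law of the process $Y$ together with the constant mode $\mathbf c$ of Definition~\ref{def-thick-disk} as the radial component of such a Liouville field (carried out in~\cite{AHS-SLE-integrability}); (ii) prove the Girsanov/rooting identity
\[
	\LF_{\cS}^{(\gamma,+\infty),(\gamma,-\infty)}(d\phi)\;\nu_\phi(dx) \;=\; C'\cdot \LF_{\cS}^{(\gamma,+\infty),(\gamma,-\infty),(\gamma,x)}(d\phi)\;dx, \qquad x\in\partial\cS,
\]
by writing $\nu_\phi(dx) = \lim_{\eps\to0}\eps^{\gamma^2/4}e^{\frac\gamma2\phi_\eps(x)}\,dx$ and invoking the Cameron--Martin theorem: reweighting the GFF part by $e^{\frac\gamma2 h_\eps(x)}$ shifts its law by $\frac\gamma2$ times the boundary covariance kernel at $x$, which is exactly a weight-$\gamma$ insertion at $x$, while the $\eps^{\gamma^2/4}$ factor, the constant-mode density, and the $-2Q\log|\cdot|$ normalization recombine into the finite constant $C'$; (iii) observe that $\LF_{\cS}^{(\gamma,u_1),(\gamma,u_2)}$ is conformally covariant, so for any two boundary points $u_1,u_2$ it describes the same law of quantum surface, which lets one peel off both insertions and identify the insertion-free base measure $\mathbf D$. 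Running (ii) once to relocate the insertion at $+\infty$ to a $\nu_\phi$-typical point and once more for the one at $-\infty$, then relabeling via (iii), yields \eqref{eq-QD2-decomp}.

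The step I expect to be the main obstacle is (ii): pushing the Girsanov computation through on the nose, so that the $\eps$-regularization of $\nu_\phi$, the explicit constant-mode density $\frac\gamma2 e^{(\beta-Q)c}\,dc$, and the $Q\log|\cdot|$/Weyl-type terms recombine into an \emph{explicit} finite constant, not merely an unknown function of the configuration, together with carefully tracking the infinite-measure normalizations (notably the $\nu_\psi(\partial)^{-1}$ factors above, which survive only because they cancel the mass produced when a marked point is forgotten). A more elementary route, the one taken in~\cite{wedges}, sidesteps Liouville fields altogether: one reads \eqref{eq-QD2-decomp}, or the proposition directly, off the Bessel-excursion structure underlying Definition~\ref{def-thick-disk}, using invariance of the relevant excursion measure under re-rooting at a point sampled from quantum length along the boundary. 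If one adopts a definition of $\QD$ that itself refers back to $\cM_2^\disk(2)$, this Bessel route is the one to take, since then deriving \eqref{eq-QD2-decomp} from $\QD$ would be circular.
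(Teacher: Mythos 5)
First, a point of comparison: the paper does not prove this statement at all --- it is imported verbatim from \cite{wedges} (Proposition A.8 there), whose proof is exactly the ``more elementary route'' you relegate to your last sentences, namely re-rooting invariance of the Bessel-excursion description of the quantum disk. Given that this paper's Definition~\ref{def-QD} builds $\QD$ out of $\cM_2^\disk(2)$, your closing remark that the Bessel route is the non-circular one is well taken, and your first-paragraph mass-balance reduction of the proposition to your decomposition of $\cM_2^\disk(2)$ into an unmarked base times $\nu_\psi\otimes\nu_\psi$ is correct. Steps (i) and (ii) of your LCFT route are also sound: the identification of $\cM_2^\disk(2)$ with the strip Liouville field with two boundary $\gamma$-insertions (your relation $\beta=Q+\tfrac\gamma2-\tfrac W\gamma$, giving $\beta=\gamma$ at $W=2$, is the right one) is established in \cite{AHS-SLE-integrability} independently of the statement being proven, and the boundary rooting identity is the boundary analogue of Lemma~\ref{lem-inserting}.

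The genuine gap is step (iii). ``Conformal covariance lets one peel off both insertions and identify the insertion-free base measure'' is not an observation. After (ii) you hold an identity of the form $\LF_{\cS}^{(\gamma,\pm\infty)}(d\phi)\,\nu_\phi(dx_1)\,\nu_\phi(dx_2)=C'\,\LF_{\cS}^{(\gamma,\pm\infty),(\gamma,x_1),(\gamma,x_2)}(d\phi)\,dx_1\,dx_2$, i.e.\ a measure on \emph{fields} with Lebesgue-distributed insertion points; to reach your decomposition you must convert this into a product of an unmarked-\emph{surface} measure $\mathbf D$ with the intrinsic boundary measures $\nu_\psi(dx_i)$. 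That conversion is a disintegration over the noncompact automorphism group of the disk (three real parameters against two marked boundary points, leaving the translation group of the strip), i.e.\ precisely a uniform-embedding/Haar argument of the type carried out in Section~\ref{subsec:loop-haar} via Lemma~\ref{lem-haar}, or the disk counterpart of Proposition~\ref{prop-unif-embed}. Your parenthetical ``$\mathbf D$ is equivalently the insertion-free Liouville field on $\bbH$'' quietly assumes this theorem; and if you import it from \cite{AHS-SLE-integrability} you must check that its proof does not itself use the quantum-typicality statements (Propositions A.8/A.13 of \cite{wedges}) in the ``add marked points'' step --- the same circularity you correctly flag for taking $\mathbf D=\QD$, but which step (iii) risks reintroducing. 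So as written the LCFT route is a plausible sketch with its hardest step asserted rather than proved, and the self-contained argument for the proposition remains the re-rooting proof of \cite{wedges}.
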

This motivates the following definition. 
\begin{definition}\label{def-QD}
	Sample $(\cS, \phi, -\infty, +\infty)/{\sim_\gamma}$ from the weighted measure $\nu_\phi(\partial \cS)^{-2} \cM_2^\disk(2)$. Then we call $(\cS, \phi)/{\sim_\gamma}$ a \emph{quantum disk}, and denote its law by $\QD$. 
\end{definition}
Here is a useful perspective on Proposition~\ref{prop-QD-typical} and Definition~\ref{def-QD}. Roughly speaking, given a sample $(\cS, \phi)/{\sim_\gamma}$ from $\QD$, if we ``sample two points from $\nu_\phi$'', then the resulting quantum surface with two marked points has law $\cM_2(2)$. Since $\nu_\phi$ is a non-probability measure with total mass $\nu_\phi(\partial \cS)$, the sampling operation should induce a weighting by $\nu_\phi(\partial \cS)^2$; this explains the factor $\nu_\phi(\partial \cS)^{-2}$ in Definition~\ref{def-QD}.

\begin{lemma}\label{lem:QD-len}
	The law of the quantum boundary length $\nu_\phi(\partial \cS)$ of a sample $(\cS, \phi)/{\sim_\gamma}$ from $\QD$ is $ 1_{\ell>0}C \ell^{-\frac4{\gamma^2}-2}\, d\ell$ for some $C>0$. 
\end{lemma}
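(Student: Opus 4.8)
The plan is to compute the law of $\nu_\phi(\partial\cS)$ by first understanding the boundary length under $\cM_2^\disk(2)$ and then applying the reweighting by $\nu_\phi(\partial\cS)^{-2}$ from Definition~\ref{def-QD}. First I would use the explicit description in Definition~\ref{def-thick-disk} with $W=2$, so that $\beta = Q + \frac\gamma2 - \frac2{\gamma^2}$, and write $\phi = \wh h + \mathbf c$ with $\mathbf c$ sampled from $\frac\gamma2 e^{(\beta-Q)c}\,dc$. The key scaling observation is that adding a constant $c$ to the field multiplies the quantum boundary length by $e^{\gamma c/2}$: concretely $\nu_{\wh h + c}(\partial\cS) = e^{\gamma c/2}\nu_{\wh h}(\partial\cS)$. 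Therefore, conditionally on $\wh h$, the boundary length $L := \nu_{\wh h + \mathbf c}(\partial\cS)$ is a deterministic monotone function of $\mathbf c$, and I can push forward the measure $\frac\gamma2 e^{(\beta-Q)c}\,dc$ through the map $c \mapsto e^{\gamma c/2}\nu_{\wh h}(\partial\cS)$.

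Carrying out this change of variables: writing $\ell = e^{\gamma c/2}\nu_{\wh h}(\partial\cS)$, we get $c = \frac2\gamma\log\ell - \frac2\gamma\log\nu_{\wh h}(\partial\cS)$ and $dc = \frac2{\gamma\ell}\,d\ell$, so the pushforward of $\frac\gamma2 e^{(\beta-Q)c}\,dc$ is
\[
\frac\gamma2 \cdot \left(\frac{\ell}{\nu_{\wh h}(\partial\cS)}\right)^{\frac2\gamma(\beta-Q)} \cdot \frac{2}{\gamma\ell}\,d\ell
= \nu_{\wh h}(\partial\cS)^{-\frac2\gamma(\beta-Q)}\,\ell^{\frac2\gamma(\beta-Q)-1}\,d\ell.
\]
Since $\beta - Q = \frac\gamma2 - \frac2{\gamma^2}$, one computes $\frac2\gamma(\beta-Q) = 1 - \frac4{\gamma^2}$, so the exponent of $\ell$ is $-\frac4{\gamma^2}$; thus the law of the boundary length under $\cM_2^\disk(2)$ is $\left(\E[\nu_{\wh h}(\partial\cS)^{\frac4{\gamma^2}-1}]\right)\ell^{-\frac4{\gamma^2}}\,d\ell$ up to the constant $1$ (absorbing the $\frac\gamma2$ and the factor $2/\gamma$), provided the expectation over $\wh h$ is finite. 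Then, under $\QD = \nu_\phi(\partial\cS)^{-2}\cM_2^\disk(2)$, the boundary length law is obtained by multiplying the density by $\ell^{-2}$, giving $1_{\ell>0}C\ell^{-\frac4{\gamma^2}-2}\,d\ell$ with $C = \E[\nu_{\wh h}(\partial\cS)^{\frac4{\gamma^2}-1}]$ times an explicit constant.

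The main obstacle is showing $C \in (0,\infty)$, i.e.\ that $\E[\nu_{\wh h}(\partial\cS)^{\frac4{\gamma^2}-1}]$ is finite and strictly positive. Positivity is immediate since $\nu_{\wh h}(\partial\cS) > 0$ almost surely. For finiteness, I would note that $\wh h = h^1 + h^2_\cS$ where $h^1(z) = Y_{\Re z}$ with $Y$ a Brownian motion with negative drift $-(Q-\beta)|t| = -(\frac4{\gamma^2} - \frac\gamma2\cdot\text{(stuff)})|t|$ — the drift being such that $Y$ has the right tail behavior — so $\nu_{\wh h}(\partial\cS)$ has finite moments of the appropriate order; this is a standard Gaussian multiplicative chaos moment bound, and in fact such integrability statements about quantum disk boundary lengths are already implicit in \cite{wedges} and \cite{ahs-disk-welding}. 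Alternatively, and perhaps more cleanly, one can cite the known formula for the law of the total boundary length of a quantum disk (e.g.\ from the LCFT description in \cite{AHS-SLE-integrability} or from \cite{ahs-disk-welding}), which directly gives the exponent $-\frac4{\gamma^2}-2$; the scaling computation above then serves to identify that exponent and reduce the claim to a single finite-moment check.
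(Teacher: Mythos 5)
Your proposal is correct and its second half coincides exactly with the paper's proof: the paper simply quotes \cite[Lemma 3.3]{AHS-SLE-integrability} for the fact that the boundary length of $\cM_2^\disk(2)$ has law $1_{\ell>0}C\ell^{-4/\gamma^2}\,d\ell$ and then applies the $\ell^{-2}$ weighting from Definition~\ref{def-QD}, which is precisely your final step (and your own ``alternatively, cite the known formula'' remark). What you do differently is to rederive the $\cM_2^\disk(2)$ input by hand: conditionally on $\wh h$, push forward the measure $\frac\gamma2 e^{(\beta-Q)c}\,dc$ under $c\mapsto e^{\gamma c/2}\nu_{\wh h}(\partial\cS)$, which correctly produces the density $\E\bigl[\nu_{\wh h}(\partial\cS)^{4/\gamma^2-1}\bigr]\,\ell^{-4/\gamma^2}$; this is essentially how the cited lemma is proved, so the extra content you buy is self-containedness at the cost of one genuinely nontrivial check, namely $\E\bigl[\nu_{\wh h}(\partial\cS)^{4/\gamma^2-1}\bigr]<\infty$, which you only gesture at. That check is standard but not free: one needs that $4/\gamma^2-1$ is strictly below the critical positive moment $4/\gamma^2$ of the boundary GMC, plus a summability argument over unit cells of the strip using the negative drift of $Y$ (subadditivity for $p\le 1$, Minkowski for $p>1$); writing a line to this effect, or citing it, would close the gap. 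One small slip to fix: with $\beta-Q=\frac\gamma2-\frac2{\gamma^2}$ as written (following the exponent in Definition~\ref{def-thick-disk}), one gets $\frac2\gamma(\beta-Q)=1-\frac4{\gamma^3}$, not $1-\frac4{\gamma^2}$; the stated exponent $-4/\gamma^2$ corresponds to the usual convention $\beta=\gamma$ for $W=2$, i.e.\ $\beta-Q=\frac\gamma2-\frac2\gamma$, so your final answer is right but the intermediate identity should be stated with $\frac W\gamma$ in place of $\frac W{\gamma^2}$.
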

\begin{proof}
	\cite[Lemma 3.3]{AHS-SLE-integrability} implies that the total boundary length of a sample from $\cM_2(2)$ has law $1_{\ell > 0} C \ell^{-\frac4{\gamma^2}}\, d\ell$. By Definition~\ref{def-QD}, weighting by $\ell^{-2}$ gives the corresponding result for $\QD$.
\end{proof}
Consequently, we can define a \emph{disintegration} $\{\QD(\ell)\}_{\ell>0}$ of $\QD$ on its quantum boundary length, i.e.\ $\QD= \int_0^\infty \QD(\ell) \, d\ell$ for measures $\QD(\ell)$ supported on the space of quantum surfaces with boundary length $\ell$. This only specifies $\QD(\ell)$ for a.e.\ $\ell$, but by continuity we can canonically define $\QD(\ell)$ for all $\ell$; see e.g.\ \cite[Section 4.5]{wedges} or \cite[Section 2.6]{ahs-disk-welding}.

Define the horizontal cylinder $\cC:= \R \times [0,2\pi]/{\sim}$ by identifying $(x,0)\sim (x, 2\pi)$ for all $x\in \R$. Let $m$ be the uniform measure on $(\{0\} \times [0,2\pi])/{\sim}$, and let $H(\cC)$ be the Hilbert space closure of smooth compactly-supported functions on $\cC$ under the Dirichlet inner product.
 Then, as for $\cS$, we define $h_\cC = \sum_i \alpha_i \xi_i$ where $(\xi_i)$ is an orthonormal basis of $H(\cC)$ and $(\alpha_i)$ are independent standard Gaussians. We call $h_\cC$ the GFF on $\cC$ normalized so that $\int_\cC h_\cC\, dm = 0$. 
 
 As before, we can decompose $H(\cC) = H_1(\cC) \oplus H_2(\cC)$ where $H_1(\cC)$ (resp.\ $H_2(\cC)$) is the subspace of functions which are constant (resp. have average zero) on $(\{ t \} \times [0,2\pi])/{\sim}$ for all $t \in \R$. 

\begin{definition}\label{def-2pt-sph}
	Let $W > 0$ and $\alpha = Q - \frac{W}{2\gamma}$. Let 
\[Y_t =
\left\{
\begin{array}{ll}
	B_{t} - (Q -\alpha)t  & \mbox{if } t \geq 0 \\
	\wt B_{-t} +(Q-\alpha) t & \mbox{if } t < 0
\end{array}
\right. , \]
where $(B_s)_{s \geq 0}$ is a standard Brownian motion  conditioned on $B_{s} - (Q-\alpha)s<0$ for all $s>0$, and $(\wt B_s)_{s \geq 0}$ is an independent copy of $(B_s)_{s \geq 0}$. Let $h^1(z) = Y_{\Re z}$ and let $h^2_\cC$ be the projection of an independent GFF $h_\cC$ to $H_2(\cC)$. Set $\wh h = h^1 + h^2_\cC$. Sample an independent real number $\mathbf c$ from the measure $[\frac\gamma2 e^{2(\alpha - Q)c} \, dc]$ on $\R$, and let $\phi = \wh h + \mathbf c$. Let $\cM_2^\sph(W)$ be the infinite measure describing the law of $(\cC, \phi, -\infty, +\infty)/{\sim_\gamma}$. We call a sample from 	$\cM_2^\sph(W)$ a \emph{quantum sphere with two insertions of weight $W$}. 
\end{definition}
The weight $W = 4-\gamma^2$ is special because the two marked points are independent samples from the quantum area measure \cite[Proposition A.13]{wedges}, so the following definition is natural. 

\begin{definition}\label{def-QS}
	Sample $(\cC, \phi, -\infty, +\infty)/{\sim_\gamma}$ from the weighted measure $\mu_\phi(\cC)^{-2} \cM_2^\sph(4-\gamma^2)$. Then we call $(\cC, \phi)/{\sim_\gamma}$ a \emph{quantum sphere}, and denote its law by $\QS$. 
\end{definition}

\begin{remark}
	For $W < \gamma Q$ and compact $I \subset \R$ the $\cM_2^\disk(W)$-mass of the event $\{\text{left boundary length} \in I\}$ is finite (so one can condition on boundary length), but for $W = \gamma Q$ this mass is infinite \cite[Lemma 2.16]{ahs-disk-welding}. The same calculation shows that $\cM_2^\sph(W)[ \text{area }\in I] < \infty$ if and only if $W < 4$. 
\end{remark}

Finally, we will need an area-weighted variant of $\cM_2^\sph(W)$.
\begin{definition}\label{def-sph-weighted}
	Fix $W>0$ and let $(\cC, \phi, -\infty, +\infty)$ be an embedding of a sample from the quantum-area-weighted measure $\mu_\phi(\cC)\cM_2^\sph(W)$. Given $\phi$, sample $\mathbf z$ from the probability measure proportional to $\mu_\phi$. We write $\cM_{2,\bullet}^\sph(W)$ for the law of the marked quantum surface $(\cC, \phi, -\infty, +\infty, \mathbf z)/{\sim_\gamma}$. 
\end{definition}

\subsection{The Liouville field}\label{subsec:LCFT}
In this section we recall the Liouville field which  was constructed in \cite{dkrv-lqg-sphere}.
Let $\exp: \cC \to {\wh \C}$ be the exponential map $z \mapsto e^z$. Let $h_\cC$ be the GFF on the cylinder as defined in the previous section, and let $h_{\wh \C} = h_\cC \circ \exp$. Then $h_{\wh \C}$ is the \emph{GFF on ${\wh \C}$ with average zero on the unit circle}. We write $P_{\wh \C}$ for the law of $h_{\wh \C}$. Its covariance kernel is 
\[G_{\wh \C}(z,w) = -\log|z-w| + \log|z|_++\log|w|+, \qquad |z|_+ := \max (|z|, 1). \]

\begin{definition}\label{def:LF-C}
	Sample $(h, \mathbf c)$ from $P_{\wh \C}\times [e^{-2Qc}\,dc]$ and let $\phi = h - 2Q \log |\cdot|_+ + \mathbf c$. We call $\phi$ the \emph{Liouville field} on ${\wh \C}$ and denote its law by $\LF_{\wh \C}$. 
\end{definition}
For a finite collection of weights $\alpha_i$ and points $z_i \in \C$, we want to define ``$\LF_{\wh \C}^{(\alpha_i, z_i)_i} = \prod_i e^{\alpha_i \phi(z_i)}\LF_{\wh \C}(d\phi)$''. This can be understood via regularization and renormalization, see e.g.\ \cite[Lemma 2.6]{AHS-SLE-integrability}. We give a direct definition below. 
\begin{definition}\label{def:LF-C-insert}
	Let $(\alpha_i, z_i) \in \R \times \C$ for $i = 1, \dots, m$, where $m \geq 1$ and the $z_i$ are distinct. Sample $(h, \mathbf c)$ from $C_{\wh \C}^{(\alpha_i,z_i)_i} P_{\wh \C}\times [e^{(\sum_i \alpha_i - 2Q)c}\,dc]$ where
	\[C_{\wh \C}^{(\alpha_i,z_i)_i} = \prod_{i=1}^m |z_i|_+^{-\alpha_i (2Q-\alpha_i)} e^{\sum_{j=i+1}^m \alpha_i\alpha_j G_{\wh \C}(z_i, z_j)}.\]
	Let $\phi = h - 2Q \log|\cdot|_+ +\sum_{i=1}^m \alpha_i G_{\wh \C}(\cdot, z_i) + \mathbf c$. We call $\phi$ the \emph{Liouville field on ${\wh \C}$ with insertions $(\alpha_i, z_i)$}, and denote its law by $\LF_{\wh \C}^{(\alpha_i, z_i)_i}$. 
\end{definition}

For a conformal automorphism $f: \wh \C \to \wh \C$ and a measure $M$ on the space of distributions on $\C$, let $f_* M$ be the pushforward of $M$ under the map $\phi \mapsto \phi \circ f^{-1}  + Q \log |(f^{-1})'|$. The following change-of-coordinates result is \cite[Theorem 3.5]{dkrv-lqg-sphere} with different notation. We present the version stated in \cite[Proposition 2.29]{AHS-SLE-integrability}.
\begin{proposition}[{\cite[Theorem 3.5]{dkrv-lqg-sphere}}]\label{prop-RV-invariance}
	For $\alpha \in \R$ let $\Delta_\alpha := \frac\alpha2(Q-\frac\alpha2)$. Let $f$ be a conformal automorphism of $\wh \C$ and let $(\alpha_i, z_i) \in \R \times \C$ satisfy $f(z_i)\neq \infty$ for $i = 1, \dots, m$. Then 
	\[ \LF_{\wh \C} = f_* \LF_{\wh \C}, \quad \text{ and } \quad \LF_{\wh \C}^{(\alpha_i, f(z_i))_i} = \prod_{i=1}^m |f'(z_i)|^{-2\Delta_{\alpha_i}} f_* \LF_{\wh \C}^{(\alpha_i, z_i)_i}.\]
\end{proposition}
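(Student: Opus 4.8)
The plan is a direct change-of-variables computation resting on two facts about the field $\phi$ under a conformal automorphism $f$ of $\wh\C$. The first is the transformation rule for the covariance kernel,
\[ G_{\wh\C}(f(z), f(w)) = G_{\wh\C}(z,w) + g_f(z) + g_f(w), \qquad g_f(z) := \log|f(z)|_+ - \log|z|_+ - \tfrac12 \log|f'(z)|, \]
which follows from the elementary Möbius identity $|f(z) - f(w)| = |z-w|\,|f'(z)|^{1/2}|f'(w)|^{1/2}$ (check it on $f(z) = \frac{az+b}{cz+d}$ with $ad - bc = 1$, where $f(z)-f(w) = \frac{z-w}{(cz+d)(cw+d)}$ and $f'(z) = (cz+d)^{-2}$). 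The second is that if $h \sim P_{\wh\C}$ then $h \circ f^{-1}$ is the whole-plane GFF normalized to have average zero with respect to $\sigma_f := f_*(\text{unif on }\partial\D)$, so that jointly $h \circ f^{-1} \overset{d}{=} h_1 - A$, where $h_1 \sim P_{\wh\C}$ and $A := \int h_1 \, d\sigma_f$ is a centered Gaussian. (When $f$ is a rotation or $z \mapsto 1/z$ one has $\sigma_f = $ unif on $\partial\D$, hence $A = 0$, and the identity is immediate; the substance is in the parabolic and hyperbolic directions.)

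Given these, I would compute $f_* \LF_{\wh\C}^{(\alpha_i, z_i)_i}$ as the law of $\phi \circ f^{-1} + Q \log|(f^{-1})'|$, substituting $g_f(f^{-1}(w)) = -g_{f^{-1}}(w)$, the covariance rule in the forms $-2Q\log|f^{-1}(w)|_+ + Q\log|(f^{-1})'(w)| = -2Q g_{f^{-1}}(w) - 2Q\log|w|_+$ and $G_{\wh\C}(f^{-1}(w), z_i) = G_{\wh\C}(w, f(z_i)) + g_{f^{-1}}(w) - g_f(z_i)$, and $h \circ f^{-1} = h_1 - A$. Collecting terms, the field becomes $h_1 - 2Q\log|\cdot|_+ + \sum_i \alpha_i G_{\wh\C}(\cdot, f(z_i)) + \beta\, g_{f^{-1}} + (\mathbf c - A - \sum_i \alpha_i g_f(z_i))$ with $\beta = \sum_i \alpha_i - 2Q$. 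To match this with $\LF_{\wh\C}^{(\alpha_i, f(z_i))_i}$ I would then: (i) condition on $h_1$, note that $\mathbf c - A$ then has law $e^{\beta A}(e^{\beta c}\,dc)$, and absorb the resulting reweighting $e^{\beta A}$ of $P_{\wh\C}$ into a Cameron–Martin shift $h_1 \mapsto h_1 + \beta \kappa$ with $\kappa(w) = \int G_{\wh\C}(w, \cdot)\,d\sigma_f$, picking up a constant $e^{\beta^2 \Var(A)/2}$; (ii) observe that comparing the two expressions for the covariance of $h \circ f^{-1}$ forces $\kappa + g_{f^{-1}} \equiv \tfrac12 \Var(A)$, so $\beta\kappa + \beta g_{f^{-1}}$ is a constant that can be absorbed into $\mathbf c$ at the cost of a factor $e^{-\beta^2 \Var(A)/2}$, which cancels the one from (i); (iii) check that the leftover multiplicative constant $C_{\wh\C}^{(\alpha_i, z_i)_i}\, e^{\beta \sum_i \alpha_i g_f(z_i)}$ equals $\prod_i |f'(z_i)|^{2\Delta_{\alpha_i}} \cdot C_{\wh\C}^{(\alpha_i, f(z_i))_i}$, using $G_{\wh\C}(f(z_i), f(z_j)) = G_{\wh\C}(z_i, z_j) + g_f(z_i) + g_f(z_j)$, $\beta = \sum_j \alpha_j - 2Q$, $e^{g_f(z_i)} = |f(z_i)|_+ |z_i|_+^{-1} |f'(z_i)|^{-1/2}$, and the identity $\tfrac12 \alpha_i(2Q - \alpha_i) = 2\Delta_{\alpha_i}$. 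This yields $f_*\LF_{\wh\C}^{(\alpha_i,z_i)_i} = \prod_i|f'(z_i)|^{2\Delta_{\alpha_i}}\,\LF_{\wh\C}^{(\alpha_i,f(z_i))_i}$, which is the claim; the insertion-free identity $\LF_{\wh\C} = f_*\LF_{\wh\C}$ is the specialization with no insertions (formally, all $\alpha_i = 0$, $\beta = -2Q$).

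The main obstacle is the bookkeeping in the previous paragraph, specifically the fact that the change of normalization of the GFF contributes a \emph{random} constant $A$ rather than a deterministic function: one cannot simply absorb it into the background charge, and must instead condition, use the shift behavior of the non-Lebesgue reference measure $e^{\beta c}\,dc$ of $\mathbf c$, and restore $P_{\wh\C}$ via Cameron–Martin. That everything reassembles — the $e^{\pm \beta^2 \Var(A)/2}$ factors cancelling, and the remaining constants collapsing to exactly $\prod_i |f'(z_i)|^{2\Delta_{\alpha_i}}$ — is precisely the statement that $e^{\alpha \phi(z)}$ is a primary field of conformal weight $\Delta_\alpha$. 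An alternative route avoiding some of this is to start from a regularization characterization of $\LF_{\wh\C}^{(\alpha_i,z_i)_i}$ (cf.\ \cite[Lemma 2.6]{AHS-SLE-integrability}), prove only the insertion-free identity by the above, and then obtain the factor $\prod_i |f'(z_i)|^{-2\Delta_{\alpha_i}}$ from the scaling of the $\eps$-regularized circle average $\phi_\eps(z_i)$, whose scale $\eps$ is distorted by $|f'(z_i)|$ under $f$.
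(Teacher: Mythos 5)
Your proposal is correct. Note, however, that the paper does not prove this statement at all: Proposition~\ref{prop-RV-invariance} is quoted from \cite[Theorem 3.5]{dkrv-lqg-sphere} (in the form restated in \cite[Proposition 2.29]{AHS-SLE-integrability}), so there is no internal proof to compare against. What you have written is essentially a correct self-contained reconstruction of the standard DKRV-style argument: the M\"obius identity $|f(z)-f(w)|=|z-w|\,|f'(z)|^{1/2}|f'(w)|^{1/2}$ giving $G_{\wh\C}(f(z),f(w))=G_{\wh\C}(z,w)+g_f(z)+g_f(w)$; the change of normalization $h\circ f^{-1}=h_1-A$ with $A=\int h_1\,d\sigma_f$; the shift of the $e^{\beta c}\,dc$ measure combined with a Cameron--Martin shift absorbing $e^{\beta A}$; the identity $\kappa+g_{f^{-1}}\equiv\tfrac12\Var(A)$ forcing the two factors $e^{\pm\beta^2\Var(A)/2}$ to cancel; and the bookkeeping of $C_{\wh\C}^{(\alpha_i,z_i)_i}$ via $\tfrac12\alpha(2Q-\alpha)=2\Delta_\alpha$, which I checked and which does collapse to exactly $\prod_i|f'(z_i)|^{2\Delta_{\alpha_i}}$, equivalent to the stated claim. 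The one genuine input you take as given is the M\"obius invariance of the whole-plane GFF modulo additive constants (your fact (2)), which is standard and which you correctly isolate; everything downstream of it is verified. So the proposal is a valid proof, matching in spirit the argument of the cited reference rather than anything in this paper.
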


As the next lemma illustrates, 
sampling points from quantum measures of the Liouville field corresponds to adding insertions to the Liouville field. We recall the proof for the reader's convenience since a closely related argument will be used later. 
\begin{lemma}[{\cite[Lemma 2.31]{AHS-SLE-integrability}}]\label{lem-inserting}
	We have $\mu_\phi(d^2u) \LF_{\wh \C}^{(\alpha_i, z_i)_i}(d\phi) = \LF_{\wh \C}^{(\gamma, u), (\alpha_i, z_i)_i}(d\phi) d^2u$. 
\end{lemma}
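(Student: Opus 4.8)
The plan is to prove the identity $\mu_\phi(d^2u)\,\LF_{\wh\C}^{(\alpha_i,z_i)_i}(d\phi) = \LF_{\wh\C}^{(\gamma,u),(\alpha_i,z_i)_i}(d\phi)\,d^2u$ by testing both sides against an arbitrary nonnegative measurable function $F(\phi)$ of the field and a test function $g(u)$ of the extra point, i.e. by computing $\int\!\!\int F(\phi)g(u)\,\mu_\phi(d^2u)\,\LF_{\wh\C}^{(\alpha_i,z_i)_i}(d\phi)$ in two ways. First I would unwind the definition of $\mu_\phi$ as the $\eps\to 0$ limit of the regularized measures $\mu_\phi^\eps(d^2u)=\eps^{\gamma^2/2}e^{\gamma\phi_\eps(u)}\,d^2u$; by Fubini and monotone convergence it suffices to compute $\lim_{\eps\to0}\eps^{\gamma^2/2}\int g(u)\,\E_{\LF_{\wh\C}^{(\alpha_i,z_i)_i}}[F(\phi)e^{\gamma\phi_\eps(u)}]\,d^2u$.

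The core computation is a Girsanov/Cameron-Martin argument. Writing $\phi = h - 2Q\log|\cdot|_+ + \sum_i \alpha_i G_{\wh\C}(\cdot,z_i) + \mathbf c$ with $h$ a GFF of law $P_{\wh\C}$, the quantity $\phi_\eps(u)$ is (up to the deterministic shift coming from $-2Q\log|\cdot|_+$, the $\alpha_i$-insertions, and $\mathbf c$) a Gaussian with variance $\sim \log\eps^{-1}$, so $\E[e^{\gamma h_\eps(u)}] \asymp \eps^{-\gamma^2/2}|u|_+^{\dots}$, which cancels the prefactor $\eps^{\gamma^2/2}$. More precisely, applying Girsanov to tilt $P_{\wh\C}$ by $e^{\gamma h_\eps(u)}$ shifts the field $h$ by $\gamma\,\E[h(\cdot)h_\eps(u)] = \gamma G_{\wh\C}(\cdot,u)$ in the limit, and produces exactly the extra deterministic term $\gamma G_{\wh\C}(\cdot,u)$ in the field, the extra factor $|u|_+^{-\gamma(2Q-\gamma)}e^{\sum_i \gamma\alpha_i G_{\wh\C}(u,z_i)}$ in the normalizing constant (matching $C_{\wh\C}^{(\gamma,u),(\alpha_i,z_i)_i}/C_{\wh\C}^{(\alpha_i,z_i)_i}$, recalling the convention in Definition~\ref{def:LF-C-insert} that only pairs $j>i$ contribute so the new insertion at $u$ interacts with all the $z_i$), and the shift $e^{\gamma c}$ in the $\mathbf c$-law, turning $e^{(\sum_i\alpha_i-2Q)c}dc$ into $e^{(\gamma+\sum_i\alpha_i-2Q)c}dc$. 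Collecting these terms one recognizes the right-hand side $\LF_{\wh\C}^{(\gamma,u),(\alpha_i,z_i)_i}(d\phi)\,d^2u$.

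The main obstacle is the technical justification of the limit: one must verify uniform integrability (or dominated convergence) to pass $\lim_{\eps\to0}$ inside the integral over $u$, and handle the behavior of $\phi_\eps(u)$ near the insertion points $z_i$ and near $u=\infty$, where $G_{\wh\C}$ has logarithmic singularities and the circle-average variance formula picks up boundary corrections. This is exactly the content of the regularization-and-renormalization discussion referenced after Definition~\ref{def:LF-C-insert} (see \cite[Lemma 2.6]{AHS-SLE-integrability}), so I would invoke that framework to control the regularized quantities and reduce the problem to the clean Gaussian computation above. Since the statement is quoted from \cite[Lemma 2.31]{AHS-SLE-integrability}, I expect the proof here to be a streamlined recollection emphasizing the Girsanov shift, with the analytic estimates cited rather than redone in full.
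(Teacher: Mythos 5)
Your proposal is correct and follows essentially the same route as the paper: a Girsanov tilt of $P_{\wh\C}$ by the regularized exponential $\eps^{\gamma^2/2}e^{\gamma h_\eps(u)}$, with the $\eps\to 0$ limit delegated to the cited references, followed by bookkeeping of the ratio $C_{\wh\C}^{(\gamma,u),(\alpha_i,z_i)_i}/C_{\wh\C}^{(\alpha_i,z_i)_i}$ and the shift of the $\mathbf c$-density from $e^{(\sum_i\alpha_i-2Q)c}$ to $e^{(\gamma+\sum_i\alpha_i-2Q)c}$. The only (immaterial) difference is organizational: the paper first runs Girsanov for the plain GFF and then inserts the log-singular part $q$ and the constant $c$ via the substitution $f=\wt f(\cdot+q+c)$, $g=e^{\gamma(q+c)}\wt g$, whereas you tilt the full field directly.
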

\begin{proof}
	Sample $h\sim P_{\wh \C}$.
	Let $h_\eps(u)$ be the average of $h$ on $\partial B_\eps(u)$ and write $G_{{\wh \C}, \eps}(z, u) := \E[h(z)h_\eps(u)]$. Let $f$ be a non-negative continuous function on the Sobolev space $H^{-1}(\C)$, and $g$ a non-negative measurable function on $\R$.  Girsanov's theorem gives
	\[ \E\left[ f(h) \eps^{\gamma^2/2} e^{\gamma h_\eps(u)} \right] = \E\left[f(h + \gamma G_{{\wh \C}, \eps}(\cdot, u)) \right]\E[\eps^{\gamma^2/2} e^{\gamma h_\eps(u)}]. \]
	With $\mu_h^\eps(d^2u) := \eps^{\gamma^2/2} e^{\gamma h_\eps(u)} \,d^2u$ and $\rho_\eps(u) := \E[\eps^{\gamma^2/2} e^{\gamma h_\eps(u)}]$, integrating against $g(u)\,d^2u$ gives
	\[ \E\left[ \int_\C f(h) g(u) \mu_h^\eps (d^2u)\right] = \int_\C \E\left[f(h + \gamma G_{{\wh \C}, \eps}(\cdot, u)) \right]g(u) \rho_\eps(u) \, d^2u.\]
	Taking the $\eps \to 0$ limit yields, with $\rho(u)$ defined by $\rho(u) \, d^2u = \E[\mu_h(d^2u)]$, 
	\[\E\left[\int f(h) g(u)\, \mu_h(d^2u) \right] = \int \E[f(h + \gamma G_{\wh \C}(\cdot, u))] g(u) \rho(u)\, d^2u. \]
	See, e.g., \cite[Section 2.4]{berestycki-lqg-notes} or \cite[Lemma 2.31]{AHS-SLE-integrability} for details on taking this limit.
	
	Let $c \in \R$ and $q(z) = \sum_i \alpha_i G_{\wh \C}(z, z_i) - 2Q \log |z|_+$. For $\wt f$ any non-negative continuous function on $H^{-1}(\C)$ and $\wt g$ any non-negative measurable function on $\R$, choose $f = \wt f(\cdot + q + c)$ and $g = e^{\gamma (q+c)} \wt g$. The above equation, together with $\mu_{h+p} = e^{\gamma p} \mu_h$ for  any continuous function $p: \C \to \R$,  gives
	\[\E\left[\int \wt f(h + q + c)  \wt g(u) \mu_{h+q+c}(d^2u) \right] = \int \E[\wt f( h + \gamma G_{\wh \C}(\cdot, u) + q + c)] \wt g(u) e^{\gamma q(u) +\gamma c}  \rho(u)\, d^2u. \]
	On the other hand, we have 
	$C_{\wh \C}^{(\gamma, u), (\alpha_i, z_i)} = C_{\wh \C}^{(\alpha_i, z_i)_i} C_{\wh \C}^{(\gamma, u)} e^{\gamma q(u) + 2\gamma Q \log |u|_+} = C_{\wh \C}^{(\alpha_i, z_i)_i} e^{\gamma q(u)} \rho(u)$, where the first equality holds by definition and the second follows from a direct calculation; see \cite[Lemma 2.12]{AHS-SLE-integrability} for a similar calculation. Thus, multiplying the previous identity by $C_{\wh \C}^{(\alpha_i, z_i)_i}$ gives 
		\[C_{\wh \C}^{(\alpha_i, z_i)_i}\E\left[\int \wt f(h + q + c)  \wt g(u) \mu_{h+q+c}(d^2u) \right] = \int C_{\wh \C}^{(\gamma, u),(\alpha_i, z_i)_i}\E[\wt f(h + \gamma G_{\wh \C}(\cdot, u) + q + c)] \wt g(u) e^{\gamma c} \, d^2u. \]
		Multiplying by $e^{(\sum_i \alpha_i - 2Q)c}$ and integrating over $c$ gives
		\[\LF_{\wh \C}^{(\alpha_i, z_i)_i} \left[ \int_\C \wt f(\phi) \wt g(u) \mu_\phi(d^2u) \right] = \int_\C \LF_{\wh \C}^{(\gamma, u), (\alpha_i, z_i)_i}[\wt f(\phi)] \wt g(u) \, d^2u. \]
		The functions $\wt f, \wt g$ are arbitrary so the desired result holds. 
\end{proof}

We need the following Liouville field description of  $\cM_{2,\bullet}^\sph(W)$.
\begin{proposition}\label{prop-3pt-QS-LF}
	Fix $W>0$, let $\alpha = Q - \frac{W}{2\gamma}$ and sample $\phi$ from $\frac{2\pi \gamma}{(Q-\alpha)^2}\LF_{\wh \C}^{(\alpha, 0), (\alpha, 1), (\gamma, -1)}$.  Then the law of $({\wh \C}, \phi, 0, 1, -1)/{\sim_\gamma}$ is $\cM_{2,\bullet}^\sph(W)$. 
\end{proposition}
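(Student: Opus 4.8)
\textbf{Proof plan for Proposition~\ref{prop-3pt-QS-LF}.}

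The plan is to first establish the two-insertion version, namely that a suitable constant multiple of $\LF_{\wh\C}^{(\alpha,0),(\alpha,\infty)}$ (interpreted via the change-of-coordinates map $z\mapsto 1/z$, or via a limiting procedure sending the insertion point to $\infty$) describes $\cM_2^\sph(W)$, and then to add the third $(\gamma,-1)$ insertion by the area-weighting lemma. Concretely, I would start from Definition~\ref{def-2pt-sph} in the cylinder coordinate: a sample from $\cM_2^\sph(W)$ is $(\cC,\phi,-\infty,+\infty)$ with $\phi=h^1+h^2_\cC+\mathbf c$, where the radial part $Y_t$ is a two-sided Brownian motion with linear drift $-(Q-\alpha)|t|$ conditioned to stay negative, and $\mathbf c$ has law $\frac\gamma2 e^{2(\alpha-Q)c}\,dc$. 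On the other side, pushing $\LF_{\wh\C}^{(\alpha,0),(\alpha,\infty)}$ forward under $\exp^{-1}:\wh\C\to\cC$ and using the explicit form of $G_{\wh\C}$ together with $\sum_i\alpha_i G_{\wh\C}(\cdot,z_i)-2Q\log|\cdot|_+$, one sees that the insertions at $0$ and $\infty$ produce exactly the linear drift $-(Q-\alpha)|\Re z|$ in the radial direction, the GFF $h_\cC$ decomposes as $h^1+h^2_\cC$, and the $\mathbf c$-law matches $e^{(\sum_i\alpha_i-2Q)c}\,dc=e^{2(\alpha-Q)c}\,dc$. The one subtlety here is the conditioning on negativity of $Y$: this corresponds to the fact that $\cM_2^\sph(W)$ embeds the surface so that the ``thick point'' $-\infty$ and the maximum coincide, i.e.\ it is a choice of embedding within the $\sim_\gamma$ class, and the Liouville field is a different (non-canonical) embedding; these are reconciled by the standard decomposition of Brownian motion with drift into its running-maximum process plus an independent conditioned-negative part (as in \cite{wedges,dms-mating-of-trees} and \cite[Section 3]{AHS-SLE-integrability}), which accounts for the multiplicative constant via the exponential law of the maximum. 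I expect this matching of the unmarked two-insertion sphere with the Liouville field to be the main obstacle — not conceptually, but in getting all constants right — and it is essentially the content of \cite[Proposition 2.29 and related results]{AHS-SLE-integrability}, so I would cite that.

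With the two-insertion statement in hand, the third insertion comes for free from Lemma~\ref{lem-inserting}. Indeed, Definition~\ref{def-sph-weighted} says $\cM_{2,\bullet}^\sph(W)$ is obtained from $\cM_2^\sph(W)$ by first weighting by the total quantum area $\mu_\phi(\cC)$ and then sampling the marked point $\mathbf z$ from the probability measure $\mu_\phi(\cC)^{-1}\mu_\phi$; the two operations compose to: weight by $\mu_\phi(\cC)$, then sample from $\mu_\phi^\#$, which is the same as sampling $\mathbf z$ from the (unnormalized) measure $\mu_\phi(d^2z)$ on top of $\cM_2^\sph(W)$. Translating to the Liouville field via the two-insertion identity and applying Lemma~\ref{lem-inserting}, the law of $(\wh\C,\phi,0,\infty,\mathbf z)$ is (a constant times) $\LF_{\wh\C}^{(\gamma,\mathbf z),(\alpha,0),(\alpha,\infty)}\,d^2\mathbf z$. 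Finally I would use Proposition~\ref{prop-RV-invariance}: choosing the Möbius transformation $f$ of $\wh\C$ that sends the marked triple $(0,\infty,\mathbf z)$ to the fixed triple $(0,1,-1)$, the covariance factors $|f'(\cdot)|^{-2\Delta_{\alpha_i}}$ together with the Jacobian of the substitution $\mathbf z\mapsto$ (its image) integrate the $d^2\mathbf z$ away and leave precisely $\LF_{\wh\C}^{(\alpha,0),(\alpha,1),(\gamma,-1)}$ up to a multiplicative constant. Matching this constant against the normalization in Definitions~\ref{def-QS}–\ref{def-sph-weighted} pins down the factor $\frac{2\pi\gamma}{(Q-\alpha)^2}$.

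The bookkeeping of constants is where care is needed: the factor $2\pi$ comes from the area form / angular integration when passing between $\cC$ and $\wh\C$, the $\gamma$ comes from the $\frac\gamma2$ in the law of $\mathbf c$ combined with the $\gamma$ in the $(\gamma,u)$ insertion weight, and the $(Q-\alpha)^{-2}$ comes from the drift $Q-\alpha$ of the radial process (the density at the maximum of a Brownian motion with drift $-(Q-\alpha)$ run over the whole line contributes a factor proportional to $(Q-\alpha)^{-1}$ from each of the two sides). I would organize the proof so that all these constants are collected at the end into a single verification, rather than tracked through every step. An alternative, and perhaps cleaner, route is to prove the proposition directly in the three-insertion form by mimicking the proof of Lemma~\ref{lem-inserting} — i.e.\ run the Girsanov/Cameron–Martin computation once, with the target insertions $(\alpha,0),(\alpha,1),(\gamma,-1)$ already in place, and compare the resulting field law on $\cC$ (after $\exp^{-1}$) directly with Definition~\ref{def-2pt-sph} plus the area-weighting — but I expect the two-step route above to be the most transparent given what has already been set up in the excerpt.
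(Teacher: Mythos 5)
Your plan breaks at the very first step. There is no two-insertion identity of the kind you propose to ``establish first'': $\cM_2^\sph(W)$ is not a constant multiple of (the pushforward to two-pointed quantum surfaces of) $\LF_{\wh\C}^{(\alpha,0),(\alpha,\infty)}$, in either direction. The obstruction is the noncompact stabilizer of the two marked points: with equal weights $\alpha$ at $0$ and $\infty$, the covariance factors of Proposition~\ref{prop-RV-invariance} for a dilation $f(z)=\lambda z$ cancel between the two insertions, so $\LF_{\wh\C}^{(\alpha,0),(\alpha,\infty)}$ is invariant under the full noncompact group $z\mapsto\lambda z$, i.e.\ under horizontal translations of the cylinder. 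Hence its pushforward to two-pointed quantum surfaces assigns infinite mass to every event of positive $\cM_2^\sph(W)$-measure, and conversely no measurable embedding of $\cM_2^\sph(W)$ can have field law proportional to this (translation-invariant) Liouville field. You can see the divergence concretely in exactly the cylinder picture you describe: under the Liouville field the radial part is an \emph{unconditioned} two-sided Brownian motion with drift $-(Q-\alpha)|\Re z|$ plus $\mathbf c\sim e^{2(\alpha-Q)c}\,dc$, and the intrinsic observable ``maximum of the radial part'', which under $\cM_2^\sph(W)$ is just $\mathbf c$ with density $\frac\gamma2 e^{2(\alpha-Q)c}$, acquires under the Liouville field the ``density'' $e^{2(\alpha-Q)v}\int_0^\infty e^{2(Q-\alpha)m}f_{M_0}(m)\,dm=\infty$, since the density $f_{M_0}$ of the maximum of the drifted two-sided Brownian motion decays at exactly the competing exponential rate. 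So the ``decomposition at the maximum'' you invoke does not reconcile the two measures up to a finite constant at the two-point level; the factor it produces is infinite. This is precisely why the statement is formulated for the three-pointed measure $\cM_{2,\bullet}^\sph(W)$: a third marked point is needed to kill the conformal automorphisms before any Liouville-field description of an embedded field is possible, and the constant $\frac{2\pi\gamma}{(Q-\alpha)^2}$ is produced when one trades the radial-maximum embedding of Definition~\ref{def-2pt-sph} for the embedding fixed by the third point.

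For comparison, the paper's proof simply quotes the companion results: \cite[Proposition B.7]{AHS-SLE-integrability}, which performs exactly this three-point computation on the cylinder (weight by area, sample the third point, re-fix the horizontal translation using that point, and carry out the Williams-type bookkeeping that yields $(Q-\alpha)^{-2}$, the $2\pi$ coming from the angular direction), together with \cite[Lemma B.4]{AHS-SLE-integrability} for the $\cC$-to-$\wh\C$ coordinate change. Your closing alternative --- running the Girsanov computation once with all three insertions in place and comparing directly with Definition~\ref{def-2pt-sph} plus the area weighting --- is essentially that route and is viable, but the step you would still have to supply is exactly the recentering argument (relating the embedding with the radial maximum at $\Re z=0$ to the embedding with the third point fixed), which is the real content of the proposition and is missing from the plan. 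A smaller point: the final M\"obius step, ``integrating the $d^2\mathbf z$ away'', should be done via a disintegration and an almost-every-triple argument (as in the proof of Proposition~\ref{prop:M3} in this paper) rather than a direct cancellation of Jacobians, though that part is routine once the three-insertion identity is in hand.
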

\begin{proof}
	\cite[Proposition B.7]{AHS-SLE-integrability} describes the field of $\cM_{2,\bullet}^\sph(W)$ in terms of the Liouville field on $\cC$, then~\cite[Lemma B.4]{AHS-SLE-integrability} gives the coordinate change from $\cC$ to ${\wh \C}$. 
\end{proof}

\subsection{Uniform embedding of quantum surfaces}\label{subsec-haar}

Let $\conf(\wh\C)$ denote the space of automorphisms of the Riemann sphere $\wh \C = \C \cup \{\infty\}$. Being a locally compact Lie group, it has a right-invariant Haar measure which is unique modulo multiplicative constant, and since $\conf(\wh \C)$ is unimodular the measure is also left-invariant. Let $\mathbf m_{\wh \C}$ be such a Haar measure. 
The following gives an explicit description of $\mathbf m_{\wh \C}$; see e.g.\ \cite[Lemma 2.28]{AHS-SLE-integrability}.
\begin{lemma}\label{lem-haar}
	Let $\mathfrak f$ be sampled from $\mathbf m_{\wh \C}$. Then there is a constant $C \in (0,\infty)$ such that the law of $(\mathfrak f(0), \mathfrak f(1), \mathfrak f(-1))$ is $C |(p-q)(q-r)(r-p)|^{-2}\, d^2p \, d^2q \, d^2r$. 
\end{lemma}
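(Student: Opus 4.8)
\textbf{Proof proposal for Lemma~\ref{lem-haar}.}

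The plan is to parametrize $\conf(\wh\C)$ explicitly by where it sends the three points $0,1,-1$, and then compute the Jacobian of this parametrization against a known reference measure on $\conf(\wh\C)$. Concretely, the map $\Phi \colon \mathfrak f \mapsto (\mathfrak f(0), \mathfrak f(1), \mathfrak f(-1))$ is a bijection from $\conf(\wh\C)$ onto the set of triples of distinct points in $\wh\C$ (given a target triple of distinct points, there is a unique Möbius transformation sending $0,1,-1$ to it). So a Haar measure on $\conf(\wh\C)$ pushes forward under $\Phi$ to \emph{some} measure on the space of distinct triples; I need to show this pushforward is $C|(p-q)(q-r)(r-p)|^{-2}\,d^2p\,d^2q\,d^2r$ up to a constant.

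First I would observe that the claimed measure $\nu(dp\,dq\,dr) := |(p-q)(q-r)(r-p)|^{-2}\,d^2p\,d^2q\,d^2r$ on triples of distinct points is invariant under the diagonal action of $\conf(\wh\C)$: if $g \in \conf(\wh\C)$ is a Möbius map, then under $(p,q,r)\mapsto (g(p),g(q),g(r))$ the Lebesgue factor $d^2p$ transforms by $|g'(p)|^2$ while the prefactor picks up $|g'(p)g'(q)g'(r)|^{-2}$ from the three differences via the identity $g(p)-g(q) = g'(p)^{1/2}g'(q)^{1/2}(p-q)$ (up to sign/phase, but only the modulus enters), so the two cancel exactly; one checks separately that nothing bad happens when a point is sent to $\infty$ by working in a coordinate chart. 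Hence $\nu$ is, up to scaling, the unique $\conf(\wh\C)$-invariant measure on the space of distinct triples. Then I would argue: since $\Phi$ intertwines the \emph{left} action of $\conf(\wh\C)$ on itself (i.e.\ $\Phi(g\circ\mathfrak f) = g\cdot\Phi(\mathfrak f)$ in the diagonal-action sense), the pushforward $\Phi_*\mathbf m_{\wh\C}$ of the (left-invariant) Haar measure is a $\conf(\wh\C)$-invariant measure on triples, hence equals $C\nu$ for some $C$, which is the claim.

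The main obstacle — really the only non-routine point — is establishing the intertwining and invariance carefully enough to conclude uniqueness. One has to check: (i) that $\Phi$ is a homeomorphism (or at least a measurable bijection with measurable inverse) between $\conf(\wh\C)$ and the open manifold of distinct triples, which follows from the explicit cross-ratio formula for the Möbius map determined by three point-pairs; (ii) that $\conf(\wh\C)$ acts transitively on distinct triples with trivial stabilizer, so that the space of distinct triples is a $\conf(\wh\C)$-torsor and therefore carries a unique invariant measure up to scaling (this is the standard fact that an invariant measure on a homogeneous space $G/H$ is unique up to scaling, here with $H$ trivial); and (iii) that $\mathbf m_{\wh\C}$, being left-invariant, really does push forward to an invariant measure under $\Phi$. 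Given these, combining with the transformation computation for $\nu$ in the previous paragraph pins down the measure. As a sanity check one can verify finiteness/local behavior: $\nu$ blows up precisely on the diagonals $\{p=q\}$ etc., consistent with $\conf(\wh\C)$ being non-compact and the degenerate triples lying "at infinity" in the group.

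\medskip

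\noindent\emph{Alternative, more computational route:} use a known concrete formula for Haar measure on $\mathrm{PSL}_2(\C)$ in the coordinates $f(z) = \frac{az+b}{cz+d}$ with $ad-bc=1$ — namely $\mathbf m_{\wh\C} \propto |c|^{-2}\,d^2a\,d^2b\,d^2d$ on a suitable chart, or the bi-invariant measure written via the Iwasawa/Cartan decomposition — then change variables to $(p,q,r) = (f(0),f(1),f(-1)) = (b/d,\ (a+b)/(c+d),\ (b-a)/(d-c))$ and grind out the Jacobian; the determinant will come out proportional to $|(p-q)(q-r)(r-p)|^{2}$, giving the stated inverse-square density. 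I would only fall back on this if the clean torsor argument needs more detail than is warranted, since the conceptual argument above is shorter and makes the invariance transparent.
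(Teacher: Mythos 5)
Your proposal is correct. Note that the paper does not actually prove Lemma~\ref{lem-haar}; it cites \cite[Lemma 2.28]{AHS-SLE-integrability}, and your argument is the natural one for this fact: the map $\mathfrak f \mapsto (\mathfrak f(0),\mathfrak f(1),\mathfrak f(-1))$ is a homeomorphism onto distinct triples intertwining left multiplication with the diagonal M\"obius action, the pushforward of Haar is therefore an invariant Radon measure, the density $|(p-q)(q-r)(r-p)|^{-2}$ is invariant by the identity $g(p)-g(q)=g'(p)^{1/2}g'(q)^{1/2}(p-q)$, and uniqueness of the invariant measure on a torsor pins down the constant. The only blemish is in the unused fallback: the chart formula for Haar measure on $\mathrm{PSL}_2(\C)$ should read, e.g., $|b|^{-2}\,d^2a\,d^2b\,d^2d$ when parametrizing by $(a,b,d)$ (or $|a|^{-2}\,d^2a\,d^2b\,d^2c$), not $|c|^{-2}\,d^2a\,d^2b\,d^2d$; since you do not rely on that route, this does not affect the proof.
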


Suppose $M$ is a measure on the space of quantum surfaces which can be embedded in $\wh \C$. Sample a pair $(\mathfrak f,S)$ from the product measure $\mathbf m_{\wh \C} \times M$, and let $\phi_0$ be a distribution on $\C$ chosen in a way measurable with respect to $S$ such that $S = (\wh \C, \phi_0)/{\sim_\gamma}$. We define $\mathbf m_{\wh \C} \ltimes M$ to be the law of $\mathfrak f \bullet_\gamma \phi_0$. We call $\mathbf m_{\wh \C} \ltimes M$ the \emph{uniform embedding of $M$}. Note that the definition of uniform embedding does not depend on the choice of $\phi_0$. Recall that $\QS$ is the law of the quantum sphere from Definition~\ref{def-QS}.

\begin{proposition}[{\cite[Theorem 1.2]{AHS-SLE-integrability}}]\label{prop-unif-embed}
	There is a constant $C$ such that $\mathbf m_{\wh \C} \ltimes \QS = C \cdot \LF_{\wh \C}$. 
\end{proposition}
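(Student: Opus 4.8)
\textbf{Proof proposal for Proposition~\ref{prop-unif-embed}.}

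The plan is to show both measures induce the same law on the curve-decorated (here just: marked) quantum surface obtained by sampling three points, and then invoke the description of the Haar measure from Lemma~\ref{lem-haar} together with Lemma~\ref{lem-inserting} to strip the marked points back off. Concretely, I would start from the measure $\mathbf m_{\wh\C}\ltimes \QS$ and augment it by three extra marked points sampled from the quantum area measure $\mu_\phi$. Recall from Definition~\ref{def-QS} that $\QS = \mu_\phi(\cC)^{-2}\cM_2^\sph(4-\gamma^2)$, and that for weight $4-\gamma^2$ the two insertions of $\cM_2^\sph$ are themselves $\mu_\phi$-typical points \cite[Proposition A.13]{wedges}; iterating the area-weighting/point-sampling bookkeeping, sampling three further points from $\mu_\phi$ against $\QS$ should produce (a constant times) $\mu_\phi(\cC)\,\cM_{2,\bullet}^\sph(4-\gamma^2)$ with the weight-$(4-\gamma^2)$ insertions forgotten, i.e. a weight-$W$ three-pointed sphere measure; by the remark after Definition~\ref{def-QS} the right choice here corresponds to $\alpha=\gamma$ in Proposition~\ref{prop-3pt-QS-LF}, since $Q-\frac{4-\gamma^2}{2\gamma} = \frac\gamma2 + \frac2\gamma - \frac2\gamma + \frac\gamma2 = \gamma$. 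Thus Proposition~\ref{prop-3pt-QS-LF} identifies the field law, after fixing the three marked points at $0,1,-1$, as a constant multiple of $\LF_{\wh\C}^{(\gamma,0),(\gamma,1),(\gamma,-1)}$.

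Next I would run the same operation on the right-hand side. By Lemma~\ref{lem-inserting} applied three times, sampling three points $u_1,u_2,u_3$ from $\mu_\phi$ against $\LF_{\wh\C}$ gives $\LF_{\wh\C}^{(\gamma,u_1),(\gamma,u_2),(\gamma,u_3)}(d\phi)\,d^2u_1\,d^2u_2\,d^2u_3$. Then by Proposition~\ref{prop-RV-invariance}, pushing forward under a conformal automorphism $f$ sending $(u_1,u_2,u_3)$ to $(0,1,-1)$ multiplies by the Jacobian factors $\prod_i |f'(u_i)|^{-2\Delta_\gamma}$; combining this with the explicit density of $(\mathfrak f(0),\mathfrak f(1),\mathfrak f(-1))$ from Lemma~\ref{lem-haar} (which is $C|(p-q)(q-r)(r-p)|^{-2}\,d^2p\,d^2q\,d^2r$) one checks that the two reparametrization effects are compatible, so that $\mathbf m_{\wh\C}\ltimes\LF_{\wh\C}$ with three $\mu_\phi$-points, read in the embedding where those points sit at $0,1,-1$, is again a constant multiple of $\LF_{\wh\C}^{(\gamma,0),(\gamma,1),(\gamma,-1)}$. (Here one uses the Möbius-invariance $\LF_{\wh\C} = f_*\LF_{\wh\C}$ from Proposition~\ref{prop-RV-invariance} to make sense of "$\mathbf m_{\wh\C}\ltimes\LF_{\wh\C}$" — indeed this should just be $\mathbf m_{\wh\C}$-measure times $\LF_{\wh\C}$, up to a constant, which is a clean sanity check.) Matching the two constants, the three-pointed versions of $\mathbf m_{\wh\C}\ltimes\QS$ and (a constant times) $\LF_{\wh\C}$ agree.

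Finally, to conclude for the unmarked surfaces, I would argue that the map "add three $\mu_\phi$-sampled marked points" is injective on measures in the relevant sense: two measures $M_1,M_2$ on quantum surfaces (without marked points) that yield the same law after sampling three points from $\mu_\phi$ must be equal, because one can integrate out / forget the marked points. More precisely, sampling three points and then forgetting them reweights $M_i$ by $\mu_\phi(\cC)^3$, a fixed measurable function of the surface, so equality of the marked versions forces equality of $\mu_\phi(\cC)^3 M_1 = \mu_\phi(\cC)^3 M_2$ and hence $M_1 = M_2$ (the weight is a.e.\ finite and nonzero). Applying this with $M_1 = \mathbf m_{\wh\C}\ltimes\QS$ and $M_2 = C\cdot\LF_{\wh\C}$ gives the proposition.

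The main obstacle I anticipate is the second step: carefully reconciling the Jacobian factors $|f'(u_i)|^{-2\Delta_\gamma}$ from Proposition~\ref{prop-RV-invariance} with the Haar density $|(p-q)(q-r)(r-p)|^{-2}$ from Lemma~\ref{lem-haar}, including verifying that $2\Delta_\gamma$ and the change-of-variables exponents conspire to leave a constant — this is exactly the computation that pins down that the uniform embedding "sees" $\LF_{\wh\C}$ rather than some differently-weighted field, and it is where the special value $\alpha=\gamma$ (equivalently $W = 4-\gamma^2$) must enter. A secondary technical point is justifying that forgetting marked points is well-defined and measure-preserving in the category of quantum surfaces, and that sampling $\mu_\phi$-points commutes appropriately with the $\mathbf m_{\wh\C}$-average; both are bookkeeping but need the non-probability-measure conventions of Section~\ref{sec-prelim} to be handled with care.
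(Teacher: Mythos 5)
This proposition is not proved in the paper at all: it is imported verbatim from \cite[Theorem 1.2]{AHS-SLE-integrability}, and your strategy (add three $\mu_\phi$-typical points; identify the three-pointed sphere embedded at $(0,1,-1)$ with a constant times $\LF_{\wh \C}^{(\gamma,0),(\gamma,1),(\gamma,-1)}$ via Proposition~\ref{prop-3pt-QS-LF} with $W=4-\gamma^2$, $\alpha=\gamma$; cancel the M\"obius Jacobians $\prod_i|f'(z_i)|^{-2\Delta_\gamma}$, using $\Delta_\gamma=1$ and $f'(0)f'(1)f'(-1)=2(p-q)(q-r)(r-p)$, against the Haar density of Lemma~\ref{lem-haar}; then strip the points with Lemma~\ref{lem-inserting} and the a.e.\ finite positive weight $\mu_\phi(\C)^3$) is precisely the argument of that cited proof and of this paper's own Section~\ref{subsec:loop-haar} analogue (Lemmas~\ref{lem:M3-haar}--\ref{lem:M3-embed} and \eqref{eq:M3-haar}), so your proposal is essentially the same approach and is sound. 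The one caveat is your parenthetical claim that $\mathbf m_{\wh \C}\ltimes \LF_{\wh \C}$ is a constant multiple of $\LF_{\wh \C}$: since $\mathbf m_{\wh \C}$ has infinite total mass this object is not well defined as a finite multiple, so the cancellation must be performed, as your main line in fact does, at the level of the three-pointed measures, where the density $|(p-q)(q-r)(r-p)|^{-2}\,d^2p\,d^2q\,d^2r$ from Lemma~\ref{lem-haar} absorbs the group volume.
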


\subsection{Conformal welding}\label{subsec:welding}

Let $\kappa>0$ and let $(D,p,q)$ be a simply-connected domain with two marked boundary points.
$\SLE_\kappa$ is a conformally invariant random curve in $D$ from $p$ to $q$ introduced by Schramm \cite{schramm-sle}, which describes the scaling limits of many statistical physics models. When $\kappa <4$, almost surely $\SLE_\kappa$ is simple and only intersects $\partial D$ at $\{p,q\}$. We will also need a spherical variant of $\SLE$: for distinct points $p,q \in \C$ and $\rho > -2$, there is a random curve from $p$ to $q$ called \emph{whole-plane $\SLE_\kappa(\rho)$}, see e.g.\ \cite[Section 2.1.3]{ig4} for its definition. 
 
 For $\kappa \in (0,8)$ and distinct points $p,q \in \C$, the \emph{two-sided whole-plane SLE}, which we denote by $\SLE^{p \rightleftharpoons q}_\kappa$, is the probability measure on pairs of curves $(\eta_1,\eta_2)$ on $\C$ connecting $p$ and $q$ where $\eta_1$ is a whole-plane $\SLE_\kappa(2)$ from $p$ to $q$, and conditioning on $\eta_1$, the curve $\eta_2$ is a chordal $\SLE_\kappa$ on the complement of $\eta_1$  from $q$ to $p$. This pair of curves $(\eta_1,\eta_2)$ satisfies the following resampling property: conditioning on one, the other has the law of 
 chordal $\SLE_\kappa$ in the complement, see e.g.\ \cite[Section 2.2]{zhan-loop-measures}.

We need a special case of \cite[Theorem 2.4]{ahs-disk-welding}. Let $\wh \C = \C\cup \{ \infty\}$ be the Riemann sphere. Let $\{\cM_2^\disk (2; \ell_1, \ell_2)\}_{\ell_1, \ell_2}$ be a disintegration of $\cM_2^\disk(2)$ on its two boundary arc lengths, i.e.\ $\cM_2^\disk(2) = \iint \cM_2^\disk(2;\ell_1, \ell_2)\,d\ell_1\,d\ell_2$, and a sample from $\cM_2^\disk(2;\ell_1, \ell_2)$ a.s.\ has boundary lengths $(\ell_1, \ell_2)$. 
\begin{proposition}\label{prop:AHS-orig}
	Fix distinct $p,q \in \C$ and let $(\wh \C, \phi, p, q)$ be an embedding of a sample from $\cM_2^\sph(4)$. Independently sample $(\eta_1, \eta_2)$ from the probability measure  $\SLE^{p \rightleftharpoons q}_\kappa$, and let $D_1$ and $D_2$ be the connected components of $\wh \C \backslash (\eta_1 \cup \eta_2)$ lying to the left and right of $\eta_1$ respectively. Then there is a constant $C$ such that the joint law of $(D_1, \phi, p,q)/{\sim_\gamma}$ and $(D_2, \phi, p,q)/{\sim_\gamma}$ is
	\[ C \int_0^\infty\int_0^\infty \cM_2^\disk(2; \ell_1, \ell_2) \times \cM_2^\disk(2; \ell_2, \ell_1) \, d\ell_1 \, d\ell_2. \]
\end{proposition}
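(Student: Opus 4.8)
The plan is to obtain Proposition~\ref{prop:AHS-orig} as the $W_1 = W_2 = 2$ specialization of the general conformal welding theorem \cite[Theorem~2.4]{ahs-disk-welding}; there is no new analytic content, only a dictionary of conventions to check. Recall that \cite[Theorem~2.4]{ahs-disk-welding} conformally welds two two-pointed quantum disks of weights $W_1, W_2 \ge \tfrac{\gamma^2}{2}$ by matching up their boundary arcs of equal quantum length, and shows that the resulting curve-decorated surface is a two-pointed quantum sphere whose weight is the sum $W_1 + W_2$, decorated by a pair of interfaces $(\eta_1, \eta_2)$ joining the two marked points and forming a theta-graph, with marginal and conditional laws governed by $W_1$ and $W_2$. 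Setting $W_1 = W_2 = 2$ then immediately yields that the ambient surface is $\cM_2^\sph(4)$ and that the joint law of the two welded disks is $C \int_0^\infty \int_0^\infty \cM_2^\disk(2; \ell_1, \ell_2) \times \cM_2^\disk(2; \ell_2, \ell_1)\, d\ell_1\, d\ell_2$, with the power of $\ell$ in the disintegration and the constant $C$ read off from that theorem.

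The next step is to identify the interface pair with $\SLE_\kappa^{p \rightleftharpoons q}$. In the weight-$(W_1, W_2)$ welding, conditionally on $\eta_1$ the curve $\eta_2$ is a chordal $\SLE_\kappa(W_1 - 2; W_2 - 2)$ in the complementary domain, which for $W_1 = W_2 = 2$ is an ordinary chordal $\SLE_\kappa$ from $q$ to $p$; and the marginal law of a single welding seam between two weight-$2$ disks is whole-plane $\SLE_\kappa(2)$ --- the whole-plane counterpart of the statement that weight-$2$ boundary points are quantum typical (cf.\ Proposition~\ref{prop-QD-typical}), as recorded in \cite{ahs-disk-welding}. Hence $(\eta_1, \eta_2)$ has exactly the law $\SLE_\kappa^{p \rightleftharpoons q}$, and since this interface law depends only on $\gamma$ and not on the field or its embedding, one may equivalently take an \emph{independent} sample of $\SLE_\kappa^{p \rightleftharpoons q}$, as in the statement. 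Finally, since the welding in \cite[Theorem~2.4]{ahs-disk-welding} is the measurable inverse of cutting along $\eta_1 \cup \eta_2$ --- an input that relies on conformal removability of $\SLE_\kappa$ for $\kappa \in (0,4)$ --- the identity may be read in the ``cutting'' direction stated here, so that beginning from an embedding $(\wh\C, \phi, p, q)$ of $\cM_2^\sph(4)$ together with an independent $\SLE_\kappa^{p \rightleftharpoons q}$, the surfaces $(D_1, \phi, p, q)/{\sim_\gamma}$ and $(D_2, \phi, p, q)/{\sim_\gamma}$ have the claimed joint law.

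The points that actually need care are the bookkeeping ones, and these are where I would concentrate the effort. One must confirm that the disk-welding sum rule of \cite{ahs-disk-welding} really produces weight $W_1 + W_2 = 4$ for the ambient sphere (and not, say, $W_1 + W_2 - \gamma^2$), and one must match orientation conventions so that $D_1$ lying to the left and $D_2$ to the right of the oriented curve $\eta_1$ agrees with the labelling of the two glued arcs in \cite{ahs-disk-welding}, yielding boundary-arc lengths $(\ell_1, \ell_2)$ for $D_1$ and $(\ell_2, \ell_1)$ for $D_2$; here one uses that each of $\eta_1$ and $\eta_2$ carries a well-defined quantum length seen from either side, equal to the common length of the two arcs glued to form it. All the genuine substance --- existence and uniqueness of the welding, the mating-of-trees/Brownian-motion computation identifying the joint law, and the removability input --- is supplied by \cite{ahs-disk-welding}, so the proof here reduces to invoking that theorem with the above parameters; the only real obstacle would arise if one had to establish \cite[Theorem~2.4]{ahs-disk-welding} from scratch, where the crux is precisely the removability of $\SLE_\kappa$ and the mating-of-trees identification of the law.
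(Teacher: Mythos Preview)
Your proposal is correct and matches the paper's approach: the paper simply introduces Proposition~\ref{prop:AHS-orig} as ``a special case of \cite[Theorem 2.4]{ahs-disk-welding}'' with no further argument, and your write-up carries out exactly this specialization to $W_1=W_2=2$, identifying the interface pair with $\SLE_\kappa^{p\rightleftharpoons q}$ and the ambient sphere with $\cM_2^\sph(4)$. The additional bookkeeping you flag (the weight sum rule, the left/right labelling of $D_1,D_2$) is more than the paper records but is the right thing to check.
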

The above statement of Proposition~\ref{prop:AHS-orig} is in terms of cutting a sphere to get two disks. It can be equivalently expressed in terms of gluing two disks to get a loop-decorated sphere. 
For fixed $\ell_1, \ell_2>0$, a pair of quantum disks sampled from $ \cM_2^\disk(2; \ell_1, \ell_2) \times \cM_2^\disk(2; \ell_2, \ell_1) $ can be \emph{conformally welded} along their boundary arcs according to quantum boundary length, to get a quantum surface with the sphere topology decorated by two points and a loop passing through them. 
For more details on the conformal welding of quantum surfaces, see e.g.\  \cite{shef-zipper, wedges, ghs-mating-survey}, and see \cite{ahs-disk-welding} for more information on the conformal welding of quantum disks. 

We now give a more precise definition of the measure $\mathrm{Weld}(\QD,\QD)$ appearing in Theorem \ref{thm-loop}. Let $\ell>0$ and let $(\cD_1, \cD_2) \sim \QD(\ell) \times \QD(\ell)$. For $i=1,2$, let $\phi_i: \mathbb S^1_\ell \rightarrow \cD_i$ be a parametrization of the boundary of $\cD_i$ according to its quantum  boundary length such that $\phi_i$ traces the boundary in counterclockwise direction when the disk is embedded in $\BB D$. Namely, for $0<s<t<1$, $\phi_i([s,t])$ is an arc on the boundary of $\cD_i$ with quantum length $t-s$, 
where we represent  $\mathbb S^1_\ell$ as the interval $[0,\ell]$ with endpoints identified.  Let $U$ be a uniform point on $\mathbb S^1_\ell$ independent of everything else. Let $\mathrm{Weld}(\cD_1,\cD_2)$ be the  curve-decorated quantum surface obtained by conformally welding $\cD_1$ and $\cD_2$ along their boundaries where $\phi_1(t)$ is identified with $\phi_2(U-t)$ for all $t\in \mathbb S^1_\ell$. In words, $\mathrm{Weld}(\cD_1,\cD_2)$ means we conformally weld $\cD_1$ and $\cD_2$ according to their boundary length uniformly at random. Let $\mathrm{Weld}(\QD(\ell), \QD(\ell))$ be the law of $\mathrm{Weld}(\cD_1, \cD_2)$, and define $\mathrm{Weld}(\QD,\QD) = \int_0^\infty \ell \cdot \mathrm{Weld}(\QD(\ell), \QD(\ell))$.

\subsection{Zhan's construction of the SLE loop measure}\label{sec-zhan-loop-measure}
Given a simple loop $\eta$ and some $d\in[0,2]$, let $\cont_{\eta,\eps}$ be $\eps^{d-2}$ times Lebesgue area measure restricted to the $\eps$-neighborhood of $\eta$.
	If $\lim\limits_{\eps\to0} \cont_{\eta,\eps}$ exists for the weak topology then we denote the limit by $\cont_\eta$ and call it the \emph{$d$-dimensional Minkowski content} of $\eta$.

We can view $\SLE^{p \rightleftharpoons q}_\kappa$ as a measure on oriented loops by concatenating $\eta_1$ and $\eta_2$. Given a loop $\eta$ sampled from $\SLE^{p \rightleftharpoons q}_\kappa$, with probability 1 the dimension of $\eta$ is $d=1+\frac{\kappa}{8}$~\cite{beffara-dim} and its $d$-dimensional Minkowski content $\cont_\eta$ exists~\cite{lawler-rezai-nat}.  The \emph{(unrooted) SLE loop measure} $\SLE_\kappa^\mathrm{loop}$ on $\C$ is an infinite measure on oriented loops defined by (see \cite[Theorem 4.2]{zhan-loop-measures}) 
\begin{equation}\label{eq:zhan-loop}
\SLE_\kappa^\mathrm{loop}(d\eta)= |\cont_\eta|^{-2} \iint_{\C\times \C}  |p-q|^{-2(2-d)} \SLE^{p \rightleftharpoons q}_\kappa(d\eta) \,d^2p\,d^2q.
\end{equation}
The operation of forgetting $p$ and $q$ in~\eqref{eq:zhan-loop} is natural, since given $\eta$, the points $p,q$ are conditionally independent points sampled from the Minkowski content measure $\mathrm{Cont}_\eta$ on $\eta$; precisely, \cite[Theorem 4.2 (i)]{zhan-loop-measures} states
\begin{equation}\label{eq:zhan-loop-pts}
\SLE_\kappa^\mathrm{loop} (d\eta)\, \mathrm{Cont}_\eta(dp)\,\mathrm{Cont}_\eta(dq) = |p-q|^{\frac{\gamma^2}4-2} \,\SLE_\kappa^{p \rightleftharpoons q}(d\eta) \, d^2p \, d^2q.
\end{equation}
For $\kappa\in (0,4]$, Zhan \cite{zhan-loop-measures} shows that $\SLE_\kappa^\mathrm{loop}$
is an example of a Malliavin-Kontsevich-Suhov (MKS) loop measure.

\subsection{The Gromov-Hausdorff-Prokhorov-uniform metric} 
\label{sec:ghpu}
In this subsection we will define precisely the space of compact loop-decorated metric measure spaces and the Gromov-Hausdorff-Prokhorov-uniform metric. This, along with definitions in Section \ref{sec-loop}, will make precise the statement of Theorem \ref{thm:saw-loop}. We remark that the analogous definitions in the setting of curve-decorated metric measure spaces were first made in \cite{gwynne-miller-uihpq}; see also \cite{gromov-metric-book,bbi-metric-geometry,gpw-metric-measure,miermont-survey,adh-ghp}.

For a metric space $(X,d)$ let $C_0(X)$ denote the space of parametrized and oriented loops on $X$ with no distinguished starting point. More precisely, identifying the circle $\mathbb S^1_\ell$ of length $\ell$ with the interval $[0,\ell]$ with endpoints identified, $C_0(X)$ is the space of continuous functions $\eta:\mathbb S^1_\ell\to X$, where we identify $\eta$ and $\wt\eta$ if $\wt \eta(t) = \eta(t+r)$ for some $r \in [0,\ell)$ and all $t \in [0,\ell)$. Let $\BB d_d^{\op{H}}$ denote the $d$-Hausdorff metric on compact subsets of $X$ and let $\BB d_d^{\op{P}}$ denote the $d$-Prokhorov metric on finite measures on $X$. Finally, let $\BB d_d^{\op{U}}$ denote the $d$-uniform metric on $C_0(X)$, i.e., 
$$
\BB d_d^{\op{U}}(\eta,\wt\eta) = \inf_{r\in[0,\ell)} \sup_{t\in[0,\ell)} d( \eta(t),\wt\eta(t+r) ).
$$

Let $\BB M^{\op{GHPU}}$ be the set of compact \emph{loop-decorated metric measure spaces}, i.e., $\BB M^{\op{GHPU}}$ is the set of 4-tuples $\frk X=(X,d,\mu,\eta)$ where $(X,d)$ is a compact metric space, $\mu$ is a finite Borel measure on $X$, and $\eta \in C_0(X)$. Given elements 
$\frk X_1=(X_1,d_1,\mu_1,\eta_1)$ and 
$\frk X_2=(X_2,d_2,\mu_2,\eta_2)$ of $\BB M^{\op{GHPU}}$, we define their Gromov-Hausdorff-Prokhorov-uniform (GHPU) distance by
$$
\BB d^{\op{GHPU}}(\frk X_1,\frk X_2)
= \inf_{(W,d),\iota_1,\iota_2} 
\BB d_D^{\op{H}}( \iota_1(X_1),\iota_2(X_2) ) + 
\BB d_D^{\op{P}}( (\iota_1)_*\mu_1, (\iota_2)_*\mu_2 ) +
\BB d_D^{\op{U}}(\iota_1\circ\eta_1,\iota_2\circ\eta_2),
$$ 
where we take the infimum over all compact metric spaces $(W,D)$ and isometric embeddings $\iota_1:X_1\to W$ and $\iota_2:X_2\to W$. It is shown in \cite{gwynne-miller-uihpq} that this defines a complete separable metric in the setting of curve-decorated (rather than loop-decorated) metric measure spaces if we identify two elements of this space which differ by a measure- and curve-preserving isometry. The analogous statement holds in the setting of loop-decorated metric measure spaces since we obtain a loop by considering a curve that forms a loop and identifying two such curves which differ by a time shift.

\section{The SLE loop via conformal welding: proof of Theorem~\ref{thm-loop}}\label{sec-loop}

In Section~\ref{subsec:proof} we prove Theorem~\ref{thm-loop} modulo  Proposition~\ref{prop:M3}, whose proof is given in Section~\ref{subsec:loop-haar}.

\subsection{Proof of Theorem~\ref{thm-loop}}\label{subsec:proof}

Fix $\gamma\in (0,2)$ and $\kappa=\gamma^2\in (0,4)$. 
Let $M=\QS\otimes \SLE_{\kappa}^{\op{loop}}$ be the law of the loop-decorated quantum surface called the MKS-loop-decorated quantum sphere with parameter $\gamma$.
Namely, if $(S, \eta)$ is sampled from $\QS \times  \SLE_{\kappa}^{\op{loop}}$ and $\phi$ is a distribution on $\C$ chosen in a way measurable with respect to $S$ such that $S = (\wh \C, \phi)/{\sim_\gamma}$, then $M$ is the law of the loop-decorated quantum surface $(\wh \C, \phi, \eta)/{\sim_\gamma}$.
Theorem~\ref{thm-loop} asserts that   $M=C\mathrm{Weld}(\QD,\QD)$ for some constant $C\in (0,\infty)$. 
To prove it,   we need a variant of Proposition~\ref{prop:AHS-orig} where the marked points on the quantum disks are forgotten.
\begin{lemma}\label{lem:AHS}
	Let $(\wh \C,h,p,q)$ be an embedding of a sample from $\cM^\sph_2(4)$. Let $(\eta_1, \eta_2)$ be a sample from $\SLE^{p \rightleftharpoons q}_\kappa$ independent of $h$, and let $\eta$ be the oriented loop obtained by concatenating $\eta_1$ and $\eta_2$. 
	Then viewed as a loop-decorated quantum surface the law of $(\wh \C,h,\eta)$ equals 
	\(C\int_0^\infty \ell^3\cdot  \mathrm{Weld}(\QD(\ell),\QD(\ell)) \, d\ell\) for some constant $C\in (0,\infty)$.
\end{lemma}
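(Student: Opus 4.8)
The strategy is to start from Proposition~\ref{prop:AHS-orig}, which gives the welding identity at the level of \emph{two-marked-point} quantum disks, and then integrate out the marked points on each side. More precisely, Proposition~\ref{prop:AHS-orig} says that cutting a sample from $\cM_2^\sph(4)$ (embedded with two fixed marked points $p,q$) along an independent $\SLE^{p\rightleftharpoons q}_\kappa$ yields, jointly, a pair of surfaces with law $C\iint \cM_2^\disk(2;\ell_1,\ell_2)\times \cM_2^\disk(2;\ell_2,\ell_1)\,d\ell_1\,d\ell_2$, where on each disk the two marked points are the two endpoints $p,q$ of the cut. Reading this in the gluing direction: welding $\cM_2^\disk(2;\ell_1,\ell_2)$ to $\cM_2^\disk(2;\ell_2,\ell_1)$ along \emph{matched} boundary arcs (the length-$\ell_1$ arc of the first to the length-$\ell_1$ arc of the second, and likewise for $\ell_2$), with the two welding seams' endpoints being the marked points, produces $\cM_2^\sph(4)$ decorated by the two-sided whole-plane $\SLE_\kappa$ loop through its two marked points. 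The first step is therefore to rewrite the right-hand side of Proposition~\ref{prop:AHS-orig} as a welding of \emph{unmarked} quantum disks $\QD(\ell)$ against each other, at the cost of bookkeeping the marked-point data and the disintegration structure.

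The key computation is to relate $\cM_2^\disk(2)$ with its two marked points integrated against $\nu_\phi\times\nu_\phi$ to $\QD$. By Proposition~\ref{prop-QD-typical} and Definition~\ref{def-QD}, $\cM_2^\disk(2)$ is exactly $\QD$ with two independently $\nu_\phi$-sampled marked points added back, i.e.\ $\cM_2^\disk(2)=\nu_\phi(\partial)\,d\mathrm{pt}_1\,\nu_\phi(\partial)\,d\mathrm{pt}_2$ over $\QD$ weighted by $\nu_\phi(\partial\cS)^2$. Disintegrating on the two arc lengths $\ell_1,\ell_2$ (so $\ell=\ell_1+\ell_2$ is the total boundary length), sampling the two marked points from $\nu_\phi$ on a boundary of total length $\ell$ amounts to: first pick the total length $\ell$ from $\QD$, which by Lemma~\ref{lem:QD-len} carries a density $\propto \ell^{-4/\gamma^2-2}$, then pick two uniform points on $\mathbb S^1_\ell$, which splits $\ell$ into $(\ell_1,\ell_2)$ and contributes a factor; matching the powers of $\ell$ recovers the $\ell^{-4/\gamma^2}$ density of $\cM_2^\disk(2)$ from \cite[Lemma 3.3]{AHS-SLE-integrability}. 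In the gluing picture, for a \emph{fixed} total length $\ell$, welding two such doubly-marked disks with matched arc lengths corresponds to welding two samples from $\QD(\ell)$ after choosing where the two welding seams sit — which is exactly the uniform point $U\in\mathbb S^1_\ell$ in the definition of $\mathrm{Weld}(\QD(\ell),\QD(\ell))$, together with a second marked point that then becomes irrelevant once we concatenate $\eta_1,\eta_2$ into the single loop $\eta$ and forget the two marked points. The $\SLE^{p\rightleftharpoons q}_\kappa$ resampling/concatenation structure ensures that the concatenated curve $\eta$ is precisely the welding interface as an unrooted oriented loop. Carefully tracking the Jacobian factors (the $\nu_\phi(\partial)^{\pm 2}$ reweightings on each side, the density of $\ell$, and the change from the $d\ell_1\,d\ell_2$ parametrization to the $\ell\,d\ell\times(\text{choice of }U)$ parametrization) should collapse all $\ell$-dependence into an overall $\ell^3$, matching the statement; here the bookkeeping needs: two factors of $\ell$ from adding back a pair of marked points (one per disk) beyond what $\QD\to\cM_2^\disk(2)$ already accounts for, and one factor of $\ell$ from the welding seam location.

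I expect the \textbf{main obstacle} to be the careful bookkeeping of these measure-theoretic Jacobians — in particular, making rigorous the passage between ``disintegrate $\cM_2^\disk(2)$ on its two arc lengths and weld matched arcs'' and ``sample from $\QD(\ell)$ and weld with a uniformly random seam offset,'' and verifying that the marked-point-forgetting operation combined with the $\SLE^{p\rightleftharpoons q}_\kappa\to$ unrooted-loop operation multiplies the density of $\ell$ by exactly $\ell^3$ rather than some other power. A clean way to organize this is to compute both sides against an arbitrary test function of the (unmarked) loop-decorated quantum surface, using Proposition~\ref{prop-QD-typical} to insert/remove marked points and \eqref{eq:zhan-loop}–\eqref{eq:zhan-loop-pts}-style identities only implicitly through the structure of $\SLE^{p\rightleftharpoons q}_\kappa$; alternatively one can invoke the analogous already-proved statement in \cite{ahs-disk-welding} and simply translate. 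The conformal welding itself (existence, uniqueness, measurability of the welded surface and curve) is supplied by \cite{ahs-disk-welding} and does not need to be reproven.
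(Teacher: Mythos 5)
Your plan follows essentially the same route as the paper's proof: start from Proposition~\ref{prop:AHS-orig}, forget the marked points via Proposition~\ref{prop-QD-typical} and Definition~\ref{def-QD} (so that the fixed-arc-length disintegration $\cM_2^\disk(2;\ell_1,\ell-\ell_1)$, once unmarked, depends only on $\ell=\ell_1+\ell_2$ and equals $\ell\,\QD(\ell)$), pick up one more factor of $\ell$ from integrating out the seam location $\ell_1$, and use the quantum-typicality of the marked points to conclude the welding seam is uniform. Your factor accounting (two factors of $\ell$ from the marked points plus one from the seam, giving $\ell^3$) is exactly the paper's, so the proposal is correct and matches the paper's argument.
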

\begin{proof}
		Let $F$ be the map that forgets the marked points of a quantum surface. By Definition~\ref{def-QD}, 
		\alb
		\int_0^\infty \ell^2 \QD(\ell)\, d\ell = F_*\cM_2^\disk(2) &= F_* \int_0^\infty\int_0^\infty \cM_2^\disk(2; \ell_1, \ell_2)\, d\ell_1\,d\ell_2 \\ &= \int_0^\infty \int_0^\ell F_* \cM_2^\disk (2;\ell_1, \ell - \ell_1)\, d\ell_1 \, d\ell.
		\ale
		In the last equality, we change variables $\ell = \ell_1 +\ell_2$ so $1_{\ell_1, \ell_2 >0} \, d\ell_1 \, d \ell_2$ corresponds to $1_{\ell > \ell_1>0} \, d\ell_1 \, d \ell$.
		By Proposition~\ref{prop-QD-typical}, the measure $F_* \cMtwo(2; \ell_1, \ell - \ell_1)$ does not depend on the choice of $\ell_1$, and hence must equal $\ell \QD(\ell)$. 
		
		Let $D_1$ and $D_2$ be the connected components of $\wh \C \backslash \eta$. 
		By Proposition~\ref {prop:AHS-orig}, the law of the pair of marked quantum surfaces $((D_1,h, p,q)/{\sim_\gamma}, (D_2,h, p,q)/{\sim_\gamma})$ equals 
		\[C \int_0^\infty \int_0^\ell \cMtwo(2; \ell_1,\ell-\ell_1) \times \cMtwo(2; \ell - \ell_1, \ell_1) \, d\ell_1 \, d\ell.\]
		Applying $F$ to both sides and using $F_* \cMtwo(2; \ell_1, \ell - \ell_1) = \ell \QD(\ell)$, the law of $((D_1, h)/{\sim_\gamma}, (D_2, h)/{\sim_\gamma})$ is $C \int_0^\infty \ell^3 \QD(\ell)^2\, d\ell$. 
		Finally, since the conformal welding of $(D_1, h,p,q)/{\sim_\gamma}$ and $(D_2, h,p,q)/{\sim_\gamma}$ is determined by the locations of the marked points, and the marked points on each disk are uniformly chosen from quantum length measure (Proposition~\ref{prop-QD-typical}),
the conformal welding of $(D_1, h)/{\sim_\gamma}$ to $(D_2, h)/{\sim_\gamma}$ is uniform, as desired.
\end{proof}

Similarly as for the proof of Proposition~\ref{prop-unif-embed} from \cite{AHS-SLE-integrability}, we prove Theorem~\ref{thm-loop} by adding three marked points. 
Suppose $(\wh\C, h,\eta)$ is an embedding of a  sample from $M$ weighted by $\mu_h(\C)$ times the square of the quantum length of $\eta$. Given $(h,\eta)$,
independently  sample $p,q$ from the probability measure proportional to the quantum length measure on $\eta$, and $r$ from the probability measure proportional to the quantum area measure, so $p, q \in \eta$ and $r \in \C$. Let $M_3$ be the law of $(\wh\C, h, \eta, p,q,r)$ viewed as a loop-decorated quantum surface with three marked points. 
Recall that $\SLE_\kappa^{p \rightleftharpoons q}$ is the law of a two-sided whole plane $\SLE_\kappa$ from $p$ to $q$.
Moreover, we view a sample $(\eta_1,\eta_2)$ from $\SLE_\kappa^{p \rightleftharpoons q}$ as an oriented loop by concatenating $\eta_1$ with $\eta_2$.
Recall   $\cM^\sph_{2,\bullet}(W)$ from Definition~\ref{def-sph-weighted}.
The following proposition describes 
$M_3$ in terms of  $\cM^\sph_{2,\bullet}(W)$ and $\SLE_\kappa^{p \rightleftharpoons q}$.
\begin{proposition}\label{prop:M3}
	Let $(\wh \C, h, p,q,r)$ be an embedding of  a sample from $\cM^\sph_{2,\bullet}(4)$.
	Independently sample $\eta$ from  $\SLE_\kappa^{p \rightleftharpoons q}$. Let $\wt M_3$ be the law of $(\wh\C, h, \eta, p,q,r)$ viewed as a loop-decorated quantum surface with three marked points. 
	Then there exists a constant $C\in (0,\infty)$ such that $M_3=C\wt M_3$.
\end{proposition}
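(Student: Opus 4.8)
The plan is to prove $M_3=C\wt M_3$ by computing both sides explicitly as a Liouville field on $\wh\C$ decorated by an independent two-sided whole-plane SLE, after using the three marked points to fix a canonical embedding. First I would simplify the two measures. In $M_3$ the weighting by $\mu_h(\C)$ and by the square of the quantum length of $\eta$ exactly cancels the normalizing constants produced when sampling $r$ from the normalized quantum area measure and $p,q$ from the normalized quantum length measure on $\eta$, so that
\[M_3=\big(\QS\otimes\SLE^{\mathrm{loop}}_\kappa\big)(d(\mathrm{surface},\eta))\times\nu_\eta(dp)\,\nu_\eta(dq)\times\mu_h(dr),\]
where $\nu_\eta$ denotes the quantum length measure on $\eta$ (defined via conformal welding, cf.\ \cite{ahs-disk-welding}). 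Unwinding Definition~\ref{def-sph-weighted}, $\cM^\sph_{2,\bullet}(4)=\cM^\sph_2(4)\times\mu_\phi(d\mathbf z)$, so
\[\wt M_3=\cM^\sph_2(4)(d(\mathrm{surface};p,q))\times\SLE^{p\rightleftharpoons q}_\kappa(d\eta)\times\mu_h(dr).\]
Both carry the identical factor $\mu_h(dr)$, and since three distinct points of $\wh\C$ have trivial M\"obius stabilizer, each of $M_3,\wt M_3$ is determined by its law in the a.s.\ unique embedding sending $(p,q,r)$ to $(0,1,-1)$; the quantum-area point $r$ is present precisely to supply this third point and remove the M\"obius freedom. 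For $\wt M_3$ the answer is then immediate: with $W=4$ one has $\alpha=Q-\tfrac2\gamma=\tfrac\gamma2$, so Proposition~\ref{prop-3pt-QS-LF} gives that in this embedding the field of $\cM^\sph_{2,\bullet}(4)$ has law $\tfrac{\pi\gamma^3}{2}\LF_{\wh\C}^{(\gamma/2,0),(\gamma/2,1),(\gamma,-1)}$, with $p=0,q=1$ the two weight-$\tfrac\gamma2$ insertions (the endpoints of the two-sided SLE) and $r=-1$ the weight-$\gamma$ insertion from the area point; hence $\wt M_3$ in this embedding is $\tfrac{\pi\gamma^3}{2}\LF_{\wh\C}^{(\gamma/2,0),(\gamma/2,1),(\gamma,-1)}(d\phi)\times\SLE^{0\rightleftharpoons1}_\kappa(d\eta)$.

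For $M_3$ I would proceed in three moves. (a) Apply Zhan's rooting identity \eqref{eq:zhan-loop-pts}, $\SLE^{\mathrm{loop}}_\kappa(d\eta)\,\cont_\eta(dp)\,\cont_\eta(dq)=|p-q|^{\gamma^2/4-2}\SLE^{p\rightleftharpoons q}_\kappa(d\eta)\,d^2p\,d^2q$, to trade the loop measure for the two-sided whole-plane SLE. Since $M_3$ samples $p,q$ from quantum length $\nu_\eta$ rather than from Minkowski content $\cont_\eta$, I first invoke the comparison of these two natural measures on an $\SLE_\kappa$ curve independent of the field: conditionally on $(h,\eta)$, $\nu_\eta$ is a deterministic multiple of the $\tfrac\gamma2$-Liouville measure along $\eta$ with base measure $\cont_\eta$, i.e.\ the regularized limit of $\eps^{a}e^{\frac\gamma2 h_\eps}\cont_{\eta,\eps}$. (b) Feed this into a Girsanov computation carried out exactly as in the proof of Lemma~\ref{lem-inserting}, but along the curve $\eta$ against $\cont_\eta$ in place of Lebesgue measure on $\C$: this converts ``sample $p$ from $\nu_\eta$'' into ``insert a weight-$\tfrac\gamma2$ operator at $p$, times an explicit finite density'', and similarly at $q$; for $r$ the same is done directly by Lemma~\ref{lem-inserting}, inserting weight $\gamma$. (c) Uniformly embed: by Proposition~\ref{prop-unif-embed}, $\mathbf m_{\wh\C}\ltimes\QS=C\LF_{\wh\C}$, so after the three insertions $\mathbf m_{\wh\C}\ltimes M_3$ is proportional to $\LF_{\wh\C}^{(\gamma/2,p),(\gamma/2,q),(\gamma,r)}$ with $p,q,r$ now carrying Lebesgue-type intensities, weighted by $|p-q|^{\gamma^2/4-2}$ and by the densities from (b). Unfolding the Haar integral with Lemma~\ref{lem-haar} and changing coordinates to $(p,q,r)=(0,1,-1)$ via Proposition~\ref{prop-RV-invariance}, the Jacobians $|f'(z_i)|^{-2\Delta_{\alpha_i}}$, the weight $|p-q|^{\gamma^2/4-2}$ (note $\gamma^2/4-2=-2(2-d)$ with $d=1+\kappa/8$), and the densities collapse into the constant $C_{\wh\C}^{(\gamma/2,0),(\gamma/2,1),(\gamma,-1)}$, leaving $M_3$ in the $(0,1,-1)$-embedding equal to a finite nonzero constant times $\LF_{\wh\C}^{(\gamma/2,0),(\gamma/2,1),(\gamma,-1)}(d\phi)\times\SLE^{0\rightleftharpoons1}_\kappa(d\eta)$. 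Comparing with $\wt M_3$ above yields $M_3=C\wt M_3$.

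The step I expect to be the main obstacle is (a)--(b): rigorously identifying the quantum length measure on the SLE loop, conditionally on the surface and the loop, with the regularized $\tfrac\gamma2$-Liouville measure relative to the Minkowski content --- equivalently, that sampling a point from $\nu_\eta$ is the same as inserting a weight-$\tfrac\gamma2$ operator at a Minkowski-content-typical point of $\eta$ --- and then checking that the Girsanov argument along $\eta$ really goes through with $\eta$ held fixed and independent of $h$. Everything else is bookkeeping, but the bookkeeping is delicate: one must keep track of which component of $\wh\C\setminus\eta$ the point $r$ falls in, of the rooting and orientation conventions relating $\SLE^{\mathrm{loop}}_\kappa$, $\SLE^{p\rightleftharpoons q}_\kappa$ and the concatenation of $(\eta_1,\eta_2)$, and of the chain of multiplicative constants, to be certain the resulting $C$ is finite and strictly positive.

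If one prefers to avoid quoting the measure comparison in (a)--(b), there is a more synthetic alternative: apply Proposition~\ref{prop:AHS-orig} to $\wt M_3$ with the area point $r$ carried along, so that cutting along the loop produces two weight-$2$ quantum disks bearing the marked points $p,q$; then, using Proposition~\ref{prop-QD-typical} (that the marked points of a weight-$2$ disk are quantum-length-typical), forgetting $p,q$ makes the welding uniform and identifies $\wt M_3$ with $p,q$ forgotten as a weighting of $\QS\otimes\SLE^{\mathrm{loop}}_\kappa$ decorated by $r$ --- essentially Lemma~\ref{lem:AHS} upgraded with an interior marked point. Running this identification in reverse gives $M_3=C\wt M_3$ once one checks that, conditionally on everything else, $(p,q)$ is an i.i.d.\ pair from $\nu_\eta$ in both $M_3$ and $\wt M_3$; verifying that matching of conditional laws is then the crux of this route.
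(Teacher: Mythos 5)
Your proposal is correct and follows essentially the same route as the paper: uniform embedding via $\haar_{\wh\C}$, Girsanov insertions of weight $\tfrac\gamma2$ at the quantum-length-typical points on $\eta$ and weight $\gamma$ at the area-typical point (the paper's Lemmas~\ref{lem:cont} and~\ref{lem-inserting}), Zhan's rooted identity~\eqref{eq:zhan-loop-pts}, the Liouville-field description of $\cM^\sph_{2,\bullet}(4)$ from Proposition~\ref{prop-3pt-QS-LF}, and M\"obius covariance (Propositions~\ref{prop-RV-invariance} and Lemma~\ref{lem-haar}) to compare the two sides at $(p,q,r)$, noting $4=\gamma\cdot 2/\gamma$ gives $\alpha=\tfrac\gamma2$. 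The obstacle you flag in (a)--(b) is resolved in the paper exactly as you suggest: the quantum length of $\eta$ is identified (up to constant) with the GMC over the Minkowski content via~\cite[Section~3.2]{benoist-lqg-chaos}, with the finite-energy estimate of Lemma~\ref{lem:cont0} guaranteeing the GMC is well defined, so the Girsanov argument of Lemma~\ref{lem-inserting} carries over verbatim with $\cont_\eta$ in place of Lebesgue measure.
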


\begin{proof}[Proof of Theorem~\ref{thm-loop} given Proposition~\ref{prop:M3}]
	See Figure~\ref{fig-M3}. 
	Fix $p, q, r \in \C$. Sample a decorated quantum surface from $\wt M_3$ and embed it as $(\wh \C, \phi, \eta, p,q,r)$. Let $(A, L)$ be its quantum area and the quantum length of its loop. By Definition~\ref{def-sph-weighted}, after weighting by $A^{-1}$ the law of $(\wh \C, \phi, \eta, p,q)$ is $\cM^\sph_{2}(4) \otimes \SLE_\kappa^{p \rightleftharpoons q}$, then by Lemma~\ref{lem:AHS} the law of $(\wh \C, \phi, \eta)$ is $C\int_0^\infty \ell^3 \mathrm{Weld}(\QD(\ell), \QD(\ell))\, d\ell$ for some $C>0$. Further weighting by $L^{-2}$, the law of $(\wh \C, \phi, \eta)$ is $C\int_0^\infty \ell \mathrm{Weld}(\QD(\ell), \QD(\ell))\, d\ell = C\mathrm{Weld}(\QD, \QD)$.
	
	By the definition of $M_3$, if we embed a sample from $M_3$ as $(\wh \C, \phi, \eta, p,q,r)$ and let $(A, L)$ be its quantum area and the quantum length of its loop, then the law of $(\wh \C, \phi, \eta)$ after weighting by $A^{-1}L^{-2}$ is $\QS \otimes \SLE_\kappa^\mathrm{loop}$. 
	
	Proposition~\ref{prop:M3} states that $M_3$ and $\wt M_3$ agree up to multiplicative constant, so by the above two paragraphs $\QS \otimes \SLE_\kappa^\mathrm{loop}$ and $\mathrm{Weld}(\QD,\QD)$ agree up to multiplicative constant.
\end{proof} 

\begin{figure}[ht!]
	\begin{center}
		\includegraphics[scale=0.37]{}
	\end{center}
	\caption{\label{fig-M3}   Illustration for proof of Theorem~\ref{thm-loop}. Measures are displayed without multiplicative constants. We denote the quantum area by $A$ and the quantum length of the loop by $L$.}
\end{figure}

\subsection{Proof of Proposition~\ref{prop:M3} via the uniform embedding}\label{subsec:loop-haar}
We will prove Proposition~\ref{prop:M3}  by first establishing Proposition \ref{prop:M3-haar}, which gives its counterpart under the uniform embedding.
As for $\QS$ in Section~\ref{subsec-haar},
 	suppose we sample $(\mathfrak f, (\wh\C, h,\eta,0,1,-1)/{\sim_\gamma})$ from $\haar_{\wh\C} \times M_3$.
The uniform embedding of $M_3$ via $\haar_{\wh \C}$, which we denote by $\haar_{\wh \C} \ltimes M_3$, 
is the law of $ (\mathfrak f\bullet_\gamma h, \mathfrak f\circ \eta, \mathfrak f(0),\mathfrak f(1), \mathfrak f(-1))$. We can similarly define $\haar_{\wh \C} \ltimes M$ and $\haar_{\wh \C} \ltimes \wt M_3$.
\begin{proposition}\label{prop:M3-haar}
	There exists a  constant $C\in (0,\infty)$ such that  $\haar_{\wh \C} \ltimes M_3= C\haar_{\wh \C}\ltimes \wt M_3$.
\end{proposition}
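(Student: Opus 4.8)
The plan is to compute both sides of the claimed identity $\haar_{\wh\C}\ltimes M_3 = C\,\haar_{\wh\C}\ltimes\wt M_3$ explicitly as Liouville-field measures decorated by $\SLE$, and match them. The key leverage is Proposition~\ref{prop-unif-embed}, which says $\haar_{\wh\C}\ltimes\QS = C\LF_{\wh\C}$, together with Lemma~\ref{lem-inserting}, which converts the sampling of a point from the quantum area measure into the insertion of a weight-$\gamma$ point into the Liouville field. The analogous statement for inserting a point from the \emph{quantum length} measure on the loop $\eta$ is the technical heart of the argument and will need its own Girsanov/regularization computation; I will describe it below.

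First I would handle the $\wt M_3$ side. Starting from an embedding of $\cM^\sph_{2,\bullet}(4)$, Proposition~\ref{prop-3pt-QS-LF} (with $W=4$, $\alpha=Q-\tfrac2\gamma=\tfrac\gamma2$) identifies its Liouville-field description as (a constant times) $\LF_{\wh\C}^{(\gamma/2,0),(\gamma/2,1),(\gamma,-1)}$ with marked points $0,1,-1$, and Lemma~\ref{lem-haar} tells us that the uniform embedding produces exactly the configuration where the three marked points land at $0,1,-1$ after averaging over $\conf(\wh\C)$ with the weight $|(p-q)(q-r)(r-p)|^{-2}$. Since by Proposition~\ref{prop-RV-invariance} the insertion weights are $\Delta_{\gamma/2}$-covariant and $2\Delta_{\gamma/2}+2\Delta_{\gamma/2}+2\Delta_\gamma$ matches the exponent $2$ in the Haar-measure density (this is the usual DOZZ-type bookkeeping, and is exactly how Proposition~\ref{prop-unif-embed} is proved in \cite{AHS-SLE-integrability}), the uniform embedding $\haar_{\wh\C}\ltimes\wt M_3$ becomes a constant times $\LF_{\wh\C}^{(\gamma/2,0),(\gamma/2,1),(\gamma,-1)}$ decorated by an independent $\SLE_\kappa^{0\rightleftharpoons1}$. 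So $\haar_{\wh\C}\ltimes\wt M_3 = C\,\LF_{\wh\C}^{(\gamma/2,0),(\gamma/2,1),(\gamma,-1)}(d\phi)\,\SLE_\kappa^{0\rightleftharpoons1}(d\eta)$.

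Next the $M_3$ side. By definition $M_3$ is $\QS\otimes\SLE_\kappa^{\op{loop}}$ weighted by $\mu_h(\C)\cdot(\text{quantum length of }\eta)^2$, with $p,q$ then sampled from the normalized quantum-length measure on $\eta$ and $r$ from the normalized quantum area measure. The weighting-by-mass-then-sampling operations cancel the normalizations, so $M_3$ is the law of $(\wh\C,h,\eta,p,q,r)$ where $(\wh\C,h)/{\sim_\gamma}\sim\QS$, $\eta\sim\SLE_\kappa^{\op{loop}}$ independently, $p,q$ are two points from $\cont$-weighted quantum length on $\eta$ and $r$ is one point from quantum area. Taking the uniform embedding and using Proposition~\ref{prop-unif-embed}, the underlying field becomes $C\LF_{\wh\C}$; then applying Lemma~\ref{lem-inserting} for the point $r$ turns $\mu_\phi(d^2r)\LF_{\wh\C}(d\phi)$ into $\LF_{\wh\C}^{(\gamma,r)}(d\phi)\,d^2r$; and applying the analogous ``insert a $\gamma/2$ point from the loop's quantum length'' lemma twice, for $p$ and $q$, turns $\nu$-type sampling on $\eta$ into the two insertions $(\gamma/2,p),(\gamma/2,q)$, together with the Radon–Nikodym factor $|p-q|^{\gamma^2/4-2}$ coming from the relation \eqref{eq:zhan-loop-pts} between $\SLE_\kappa^{\op{loop}}$ decorated by its two Minkowski-content points and $\SLE_\kappa^{p\rightleftharpoons q}$. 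After these insertions and using Möbius covariance of everything in sight to move $(p,q,r)$ to $(0,1,-1)$ — again absorbing the Jacobians into the Haar density exactly as on the $\wt M_3$ side — one obtains $\haar_{\wh\C}\ltimes M_3 = C\,\LF_{\wh\C}^{(\gamma/2,0),(\gamma/2,1),(\gamma,-1)}(d\phi)\,\SLE_\kappa^{0\rightleftharpoons1}(d\eta)$, the same measure as before.

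\textbf{Main obstacle.} The crux is the ``loop-length insertion lemma'': a Liouville-field analogue of Lemma~\ref{lem-inserting} but with the quantum boundary-length measure along the $\SLE$ loop $\eta$ in place of the bulk area measure. Because $\eta$ depends on $\phi$ only through independence (it is sampled independently of $h$ and then $\phi$ is the uniform embedding), one can condition on $\eta$ and run Girsanov exactly as in the proof of Lemma~\ref{lem-inserting}: the Gaussian part $h$ gets shifted by $\frac\gamma2 G_{\wh\C}(\cdot,p)$ when one samples a point $p$ from $\nu_\phi^\eps$-approximations along $\eta$, the normalizing density $\rho_\eps(p)\to\rho_\eta(p)$ exists along a curve of Minkowski dimension $1$, and the bookkeeping constants match those defining $\LF_{\wh\C}^{(\gamma/2,p),\dots}$. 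One must check the $\eps\to0$ limit for the one-dimensional length measure (rather than the two-dimensional area measure) — this is where the Minkowski-content input of Zhan and Lawler–Rezaei is used — and one must track carefully how the two insertions at $p$ and $q$ interact, which is precisely the source of the $|p-q|^{\gamma^2/4-2}$ factor that dovetails with \eqref{eq:zhan-loop-pts}. Everything else is the standard change-of-coordinates/Haar-measure computation already carried out in \cite{AHS-SLE-integrability} for the undecorated sphere.
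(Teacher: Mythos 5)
Your route is the same as the paper's: describe both uniform embeddings as explicit Liouville-field measures decorated by two-sided whole-plane SLE, using Proposition~\ref{prop-3pt-QS-LF} on the $\wt M_3$ side, a Girsanov ``insertion along the loop'' lemma on the $M_3$ side (this is exactly the paper's Lemmas~\ref{lem:cont0} and~\ref{lem:cont}), Zhan's identity~\eqref{eq:zhan-loop-pts}, Lemma~\ref{lem-haar}, and M\"obius covariance (Proposition~\ref{prop-RV-invariance}). Your identification of the loop-length insertion lemma as the technical heart, including the finite-energy/Minkowski-content input, is correct and matches the paper.

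However, the central bookkeeping step as you state it is wrong, and it would make the two sides fail to match. You claim that the covariance factors $|f'(0)|^{2\Delta_{\gamma/2}}|f'(1)|^{2\Delta_{\gamma/2}}|f'(-1)|^{2\Delta_\gamma}$ exactly cancel the Haar density $|(p-q)(q-r)(r-p)|^{-2}$, so that $\haar_{\wh\C}\ltimes \wt M_3$ (and likewise $\haar_{\wh\C}\ltimes M_3$) equals a constant times $\LF_{\wh\C}^{(\gamma/2,0),(\gamma/2,1),(\gamma,-1)}\times \SLE_\kappa^{0\rightleftharpoons 1}$ with insertions pinned at $0,1,-1$. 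This cannot be right: the uniform embedding keeps the three marked points random (they are $\mathfrak f(0),\mathfrak f(1),\mathfrak f(-1)$ for Haar-distributed $\mathfrak f$), and, quantitatively, $\Delta_{\gamma/2}=\tfrac12+\tfrac{\gamma^2}{16}\neq 1$ (only $\Delta_\gamma=1$), so the product of Jacobians is $C|p-q|^{\gamma^2/4-2}|(p-q)(q-r)(r-p)|^{2}$ and after multiplying by the Haar density a factor $|p-q|^{\gamma^2/4-2}$ survives; the correct statement is $\haar_{\wh\C}\ltimes \wt M_3=C|p-q|^{\gamma^2/4-2}\,\LF_{\wh\C}^{(\gamma/2,p),(\gamma/2,q),(\gamma,r)}(d\phi)\,\SLE_\kappa^{p\rightleftharpoons q}(d\eta)\,d^2p\,d^2q\,d^2r$ (the paper's Lemmas~\ref{lem:M3-embed} and~\ref{lem:M3-haar}). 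This surviving factor is precisely what must match the $|p-q|^{\gamma^2/4-2}$ produced by~\eqref{eq:zhan-loop-pts} on the $M_3$ side --- this is the KPZ identity $d=2\Delta_{\gamma/2}$ the paper highlights --- so under your asserted exact cancellation the two sides would differ by $|p-q|^{\gamma^2/4-2}$ and the comparison would break; indeed your own account is internally inconsistent, since you keep this factor on the $M_3$ side while claiming the Jacobians are absorbed ``exactly as on the $\wt M_3$ side.'' The fix is simply to not pin the points: leave both uniform embeddings as densities against $d^2p\,d^2q\,d^2r$ and compare them there, which is what the paper does.
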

We first  give the uniform embedding of $M$.
\begin{lemma}\label{lem:M-haar}
	There exists a  constant $C\in (0,\infty)$ such that $\haar_{\wh \C} \ltimes M=C\cdot \LF_{\wh \C} \times \SLE^{\mathrm{loop}}_\kappa$.
\end{lemma}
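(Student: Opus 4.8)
The plan is to prove Lemma~\ref{lem:M-haar} by combining the definition of $M = \QS \otimes \SLE_\kappa^{\op{loop}}$ with the uniform-embedding identity for the unmarked quantum sphere, Proposition~\ref{prop-unif-embed}. The key observation is that the loop $\eta$ in the definition of $M$ is sampled independently of the quantum surface $S$, and that $\SLE_\kappa^{\op{loop}}$ is M\"obius invariant \cite[Theorem 4.2]{zhan-loop-measures}. So when we sample $(\mathfrak f, (\wh\C, h, \eta)/{\sim_\gamma})$ from $\haar_{\wh\C} \times M$ and form $(\mathfrak f \bullet_\gamma h, \mathfrak f \circ \eta)$, the field part and the loop part can be decoupled.

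Concretely, I would argue as follows. Write the sample from $M$ as $(\wh\C, \phi_0, \eta_0)/{\sim_\gamma}$ where, conditionally on the quantum surface, $\phi_0$ is a measurable choice of embedding and $\eta_0$ is sampled from $\SLE_\kappa^{\op{loop}}$ independently. Since $\eta_0$ is independent of $\phi_0$ and $\mathfrak f$, and since $\SLE_\kappa^{\op{loop}}$ is invariant under $\mathfrak f \in \conf(\wh\C)$, the pair $(\mathfrak f \bullet_\gamma \phi_0, \mathfrak f \circ \eta_0)$ has the same law as $(\mathfrak f \bullet_\gamma \phi_0, \wt\eta)$ where $\wt\eta \sim \SLE_\kappa^{\op{loop}}$ is independent of $\mathfrak f \bullet_\gamma \phi_0$. [One should be slightly careful here because $M$ and $\haar_{\wh\C}$ are infinite measures; but $\SLE_\kappa^{\op{loop}}$-many of the manipulations are genuinely fiber-wise over the $\QS$-component, and M\"obius invariance of the probability measure $(\SLE_\kappa^{\op{loop}})^\#$ together with homogeneity handles the infinite total mass — alternatively restrict to finite-mass events and extend.] By definition $\mathfrak f \bullet_\gamma \phi_0$ is exactly a sample from $\haar_{\wh\C} \ltimes \QS$, which by Proposition~\ref{prop-unif-embed} equals $C \cdot \LF_{\wh\C}$. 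Hence $\haar_{\wh\C} \ltimes M = C \cdot \LF_{\wh\C} \times \SLE_\kappa^{\op{loop}}$, as claimed.

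The step I expect to require the most care is making the decoupling rigorous in the infinite-measure setting: one needs to know that $\haar_{\wh\C} \ltimes M$ is a well-defined measure (not depending on the choice of $\phi_0$), and that ``pushing forward an independent $\SLE_\kappa^{\op{loop}}$-decoration through a random M\"obius map, where the M\"obius map is entangled with the rest but the decoration is not'' genuinely yields a product measure with an independent $\SLE_\kappa^{\op{loop}}$ factor. This is where M\"obius invariance of $\SLE_\kappa^{\op{loop}}$ is used: conditionally on $(\mathfrak f, \phi_0)$, the law of $\mathfrak f \circ \eta_0$ is $(\SLE_\kappa^{\op{loop}})^{\#}$ pushed forward by $\mathfrak f$, which is again $(\SLE_\kappa^{\op{loop}})^\#$, independent of $(\mathfrak f,\phi_0)$; therefore the joint law of $(\mathfrak f \bullet_\gamma \phi_0, \mathfrak f \circ \eta_0)$ is the product of the law of $\mathfrak f\bullet_\gamma\phi_0$ with $\SLE_\kappa^{\op{loop}}$ (up to the overall scaling constant absorbing total masses). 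Everything else — that $\mathfrak f\bullet_\gamma\phi_0 \sim \haar_{\wh\C}\ltimes\QS$ and the invocation of Proposition~\ref{prop-unif-embed} — is immediate from the definitions in Section~\ref{subsec-haar}.
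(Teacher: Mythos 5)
Your proposal is correct and follows essentially the same route as the paper: the paper's proof is exactly the combination of the M\"obius invariance of $\SLE^{\mathrm{loop}}_\kappa$ (so the loop decoration decouples from the field under the uniform embedding) with Proposition~\ref{prop-unif-embed} giving $\haar_{\wh\C}\ltimes\QS = C\cdot\LF_{\wh\C}$. One small caveat: since $\SLE^{\mathrm{loop}}_\kappa$ is an infinite measure, the normalized measure $(\SLE_\kappa^{\op{loop}})^\#$ you invoke does not exist; the decoupling should instead be phrased via Fubini and the invariance of the infinite measure itself under each fixed $f\in\conf(\wh\C)$ (which is exactly \cite[Theorem 4.2]{zhan-loop-measures}), after which your argument goes through unchanged.
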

\begin{proof}
	The measure $\SLE^{\mathrm{loop}}_\kappa$ is conformally invariant, namely, for each $f\in \conf(\wh \C)$, 
	the law of $f\circ \eta$ is $\SLE^{\mathrm{loop}}_\kappa$ if $\eta$ is sampled from $\SLE^{\mathrm{loop}}_\kappa$.
	Now Lemma~\ref{lem:M-haar} follows from Proposition~\ref{prop-unif-embed}.
\end{proof}
We now describe the uniform embedding of $M_3$. 
\begin{lemma}\label{lem:M3-haar}
	There exists a constant  $C\in (0,\infty)$ such that 
	\[
	\haar_{\wh \C} \ltimes  M_3 = C|p-q|^{\frac{\gamma^2}4-2} \,\LF_{\wh\C}^{(\frac\gamma2, p),(\frac\gamma2, q), (\gamma, r)} (d\phi)\, \SLE_\kappa^{p \rightleftharpoons q} (d\eta) \,d^2 p \,d^2q\, d^2r.
	\]
\end{lemma}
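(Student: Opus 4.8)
The plan is to start from the definition of $M_3$ and push it through the uniform embedding, using Lemma~\ref{lem:M-haar} (which handles $M$) together with the two ``adding an insertion'' mechanisms available to us: Lemma~\ref{lem-inserting} for sampling an area point (which costs a $(\gamma, \cdot)$ insertion), and an analogous boundary-length version for sampling points from $\mathrm{Cont}_\eta$ along the loop. Recall that $M_3$ is obtained from $M$ by first weighting by $\mu_h(\C)$ times the squared quantum length of $\eta$, then sampling $p, q$ from (normalized) quantum length measure on $\eta$ and $r$ from (normalized) quantum area measure. Since $\mathrm{Cont}_\eta$ is the scaling limit defining the quantum length measure along the loop (in the coupling with LQG, the quantum length of the loop is a deterministic functional, and the natural parametrization of the loop corresponds to quantum length up to a constant), the weighting-then-sampling operation for $p, q$ produces exactly $\mathrm{Cont}_\eta(dp)\,\mathrm{Cont}_\eta(dq)$ (no normalization, because the weighting by $L^2$ cancels the two normalizing factors), and similarly the weighting by $\mu_h(\C)$ and sampling of $r$ produces $\mu_\phi(d^2r)$.

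First I would apply the uniform embedding to this description. Since $\haar_{\wh\C}$ acts on the field and the loop only (it does not interact with the marked points, which get carried along), and since $\mu_\phi$, $\mathrm{Cont}_\eta$ are covariant under conformal maps, we get
\[
\haar_{\wh\C}\ltimes M_3 = (\haar_{\wh\C}\ltimes M)(d\phi\, d\eta)\,\mathrm{Cont}_\eta(dp)\,\mathrm{Cont}_\eta(dq)\,\mu_\phi(d^2r).
\]
By Lemma~\ref{lem:M-haar} the first factor is $C\,\LF_{\wh\C}(d\phi)\,\SLE_\kappa^{\mathrm{loop}}(d\eta)$. Now I would invoke~\eqref{eq:zhan-loop-pts} to rewrite $\SLE_\kappa^{\mathrm{loop}}(d\eta)\,\mathrm{Cont}_\eta(dp)\,\mathrm{Cont}_\eta(dq)$ as $|p-q|^{\frac{\gamma^2}{4}-2}\,\SLE_\kappa^{p\rightleftharpoons q}(d\eta)\,d^2p\,d^2q$. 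This turns the right-hand side into
\[
C\,|p-q|^{\frac{\gamma^2}{4}-2}\,\LF_{\wh\C}(d\phi)\,\mu_\phi(d^2r)\,\SLE_\kappa^{p\rightleftharpoons q}(d\eta)\,d^2p\,d^2q.
\]
Finally I would apply Lemma~\ref{lem-inserting}, which gives $\mu_\phi(d^2r)\,\LF_{\wh\C}(d\phi) = \LF_{\wh\C}^{(\gamma,r)}(d\phi)\,d^2r$, yielding
\[
\haar_{\wh\C}\ltimes M_3 = C\,|p-q|^{\frac{\gamma^2}{4}-2}\,\LF_{\wh\C}^{(\gamma,r)}(d\phi)\,\SLE_\kappa^{p\rightleftharpoons q}(d\eta)\,d^2p\,d^2q\,d^2r.
\]
It then remains to upgrade $\LF_{\wh\C}^{(\gamma,r)}$ to $\LF_{\wh\C}^{(\frac\gamma2,p),(\frac\gamma2,q),(\gamma,r)}$ by also recording the two boundary insertions coming from $p,q\in\eta$; concretely I would establish a boundary analogue of Lemma~\ref{lem-inserting} stating that $\mathrm{Cont}_\eta(dp)\,\mathbb P(d\eta)\,\LF_{\wh\C}(d\phi)$ (in the coupled measure) corresponds to adding a $(\frac\gamma2,p)$ insertion to the field, and apply it twice — with the Girsanov argument essentially identical to the one reproduced in the proof of Lemma~\ref{lem-inserting}, but using the quantum length regularization $\eps^{\gamma^2/4}e^{\frac\gamma2 h_\eps(p)}$ along the interface instead of the area regularization.

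The main obstacle I expect is making rigorous the statement that sampling a point from $\mathrm{Cont}_\eta$ along the SLE loop corresponds, in the coupled LQG/SLE measure, to inserting a weight-$\frac\gamma2$ boundary field singularity at that point — this requires knowing that the Minkowski content of the SLE loop coincides (up to a deterministic constant) with the LQG boundary length measure on the welding interface, and that the Girsanov computation can be run against this measure; this is the place where one genuinely uses the welding result of~\cite{ahs-disk-welding} rather than just the abstract formalism, and it is cleaner to route the argument through Lemma~\ref{lem:AHS} and Proposition~\ref{prop:AHS-orig} (which already identify the interface length with quantum length) rather than analyzing $\mathrm{Cont}_\eta$ directly. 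Everything else — conformal covariance of $\mu_\phi$ and $\mathrm{Cont}_\eta$, the bookkeeping of normalizing constants absorbed into $C$, and the application of~\eqref{eq:zhan-loop-pts} and Lemma~\ref{lem-inserting} — is routine.
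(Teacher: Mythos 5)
Your overall route is the paper's route: start from Lemma~\ref{lem:M-haar}, produce the three insertions by Girsanov-type arguments (Lemma~\ref{lem-inserting} for $r$ and a loop analogue for $p,q$), and convert $\SLE_\kappa^{\mathrm{loop}}(d\eta)\,\cont_\eta(dp)\,\cont_\eta(dq)$ into $|p-q|^{\gamma^2/4-2}\,\SLE_\kappa^{p\rightleftharpoons q}(d\eta)\,d^2p\,d^2q$ via~\eqref{eq:zhan-loop-pts}. However, there is a genuine confusion at the crucial step. The quantum length measure on $\eta$ is \emph{not} $\cont_\eta$ (up to constant); it is the field-dependent GMC measure $\nu^\eta_\phi=\lim_{\eps\to0}\eps^{\gamma^2/8}e^{\frac\gamma2\phi_\eps}\cont_\eta$ (note the exponent $\gamma^2/8$, not $\gamma^2/4$, since one uses full-circle averages along a bulk curve of dimension $1+\kappa/8$). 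Consequently your first displayed identity should read $\nu^\eta_\phi(dp)\,\nu^\eta_\phi(dq)$ in place of $\cont_\eta(dp)\,\cont_\eta(dq)$; as written it is false, and applying~\eqref{eq:zhan-loop-pts} at that stage is not yet justified because the factors $\cont_\eta(dp)\,\cont_\eta(dq)$ have not been earned. Relatedly, the ``boundary analogue of Lemma~\ref{lem-inserting}'' you propose --- that weighting by $\cont_\eta(dp)$ produces a $(\frac\gamma2,p)$ insertion --- cannot hold as stated, since $\cont_\eta$ does not depend on the field and Girsanov has nothing to reweight. The correct statement (the paper's Lemma~\ref{lem:cont}) goes the other way: $\nu^\eta_\phi(dp)\,\LF_{\wh\C}(d\phi)=\LF_{\wh\C}^{(\frac\gamma2,p)}(d\phi)\,\cont_\eta(dp)$, i.e.\ the Minkowski content appears only \emph{after} the Girsanov step, and only then does~\eqref{eq:zhan-loop-pts} apply.

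You also misidentify the external input needed to make this rigorous. What is required is not the welding result (Proposition~\ref{prop:AHS-orig} or Lemma~\ref{lem:AHS}), which plays no role in this lemma, and not an identification of $\cont_\eta$ with quantum length (false). The needed inputs are: (i) the a.s.\ existence of the $(1+\frac\kappa8)$-dimensional Minkowski content of the loop \cite{lawler-rezai-nat}; (ii) a finite-energy estimate (the paper's Lemma~\ref{lem:cont0}, via Green's function bounds and local absolute continuity) guaranteeing that the GMC $\nu^\eta_\phi$ over $\cont_\eta$ is well defined for a.e.\ $\eta$; and (iii) the identification, from \cite{benoist-lqg-chaos}, of the quantum length of $\eta$ with this GMC up to a deterministic constant. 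With these in place, the Girsanov computation of Lemma~\ref{lem-inserting} carries over verbatim with $\mu_\phi$ replaced by $\nu^\eta_\phi$, and your chain of steps, reordered as above, yields the lemma.
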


To prove Lemma~\ref{lem:M3-haar} we use an analog of  Lemma~\ref{lem-inserting} based on the Girsanov theorem. We first review some background on the Minkowski content of SLE and its relation to  quantum length.
As before we denote the  $(1+\frac\kappa8)$-dimensional Minkowski content  of an $\SLE_\kappa$-type curve $\eta$ by $\mathrm{Cont}_\eta$.
\begin{lemma}\label{lem:cont0}
	Let $d=1+\frac{\kappa}{8}$. Let $\eta$ be sampled from $\SLE_\kappa^\lp$. Then almost surely 
	\begin{equation}\label{eq:energy}
		\int_{\C^2}  \frac{\cont_\eta(dx)\,  \cont_\eta(dy) }{|x-y|^{d-\eps}}<\infty \quad \textrm{for each }\eps\in (0,d).
	\end{equation}	
\end{lemma}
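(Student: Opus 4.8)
The plan is to prove the integrability bound \eqref{eq:energy} by reducing to a corresponding estimate for a single chordal (or whole-plane) $\SLE_\kappa$ strand and then to a two-point moment bound for the Minkowski content measure. First I would recall from \cite{lawler-rezai-nat} that for a chordal $\SLE_\kappa$ curve $\eta$ in a bounded domain, the Minkowski content measure $\cont_\eta$ exists a.s.\ and has finite expectation, with a Green's-function type formula: $\E[\cont_\eta(dx)\cont_\eta(dy)]$ has a density comparable to $|x-y|^{-(2-d)}$ times bounded factors (this is the content of the one-point and two-point estimates for natural parametrization; see also \cite{lawler-rezai-nat, beffara-dim}). Integrating such a density against $|x-y|^{-(d-\eps)}$ over a bounded region gives a convergent integral precisely because $(2-d)+(d-\eps) = 2-\eps < 2$, so the singularity at $x=y$ is integrable in $\R^2 \times \R^2$. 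Hence the \emph{expected} value of the integral in \eqref{eq:energy} is finite, which gives the a.s.\ finiteness.

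The main structural point is to pass from this statement about bounded chordal $\SLE_\kappa$ to the loop measure $\SLE_\kappa^\lp$. Here I would use the decomposition built into Zhan's construction, recalled in \eqref{eq:zhan-loop}: a sample $\eta$ from $\SLE^{p\rightleftharpoons q}_\kappa$ is the concatenation of a whole-plane $\SLE_\kappa(2)$ strand $\eta_1$ from $p$ to $q$ and, conditionally on $\eta_1$, a chordal $\SLE_\kappa$ strand $\eta_2$ in the complement from $q$ to $p$, and $\cont_\eta = \cont_{\eta_1} + \cont_{\eta_2}$. Splitting the double integral into the four cross terms $\cont_{\eta_i}(dx)\cont_{\eta_j}(dy)$ and using Cauchy--Schwarz to control the $i\neq j$ terms by the diagonal ones, it suffices to bound $\int |x-y|^{-(d-\eps)} \cont_{\eta_i}(dx)\cont_{\eta_i}(dy)$ for $i=1,2$. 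Each strand is, away from its endpoints, locally absolutely continuous with respect to a bounded chordal $\SLE_\kappa$ (for the whole-plane $\SLE_\kappa(2)$ strand, use local absolute continuity with respect to chordal $\SLE_\kappa$ away from $p$ and $q$, as in \cite[Section 2]{zhan-loop-measures} or \cite{ig4}), so the expected two-point bound transfers, with the endpoint neighborhoods handled by the uniform bounds near the tip. Finally, since \eqref{eq:energy} holds a.s.\ under each $\SLE^{p\rightleftharpoons q}_\kappa$ and the loop measure $\SLE_\kappa^\lp$ is (up to the $|p-q|^{-2(2-d)}$ weighting and the $|\cont_\eta|^{-2}$ factor) a mixture of these over $p,q$, the property holds $\SLE_\kappa^\lp$-a.s.

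The hard part will be making the local absolute continuity argument clean near the two special points $p$ and $q$ where the whole-plane and chordal pieces are glued: there the curve is not dominated by a single bounded chordal $\SLE_\kappa$, and one must either invoke the known local behavior of natural parametrization near the tip (the Minkowski content of a short initial segment has all moments, uniformly) or decompose the strand at a stopping time and use the Markov property. I would also need to be slightly careful that the constants in the two-point content estimate are locally uniform in the domain and boundary points so the mixture over $(p,q)$ integrates; but since in \eqref{eq:energy} we only want a.s.\ finiteness (not an integrated bound), it is enough that the bound holds for each fixed $(p,q)$, which sidesteps the uniformity issue. Everything else — the four-term split, Cauchy--Schwarz, and the elementary computation that $|x-y|^{-(2-d)-(d-\eps)}$ is locally integrable on $\R^4$ — is routine.
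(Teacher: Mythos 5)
Your proposal takes essentially the same route as the paper: its two-line proof likewise establishes \eqref{eq:energy} in expectation for chordal $\SLE_\kappa$ via the Green's function (two-point Minkowski content) estimates of Lawler--Rezaei and then transfers the almost-sure statement to $\SLE_\kappa^{\mathrm{loop}}$ by local absolute continuity. Your additional detail---splitting the loop into the two strands of $\SLE^{p \rightleftharpoons q}_\kappa$, handling cross terms, and passing through the mixture formula \eqref{eq:zhan-loop}---simply spells out what the paper's appeal to local absolute continuity leaves implicit, so the argument is correct and not a genuinely different approach.
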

\begin{proof}
	By 
	Green's function estimates for chordal SLE (see e.g.\ \cite{lawler-rezai-nat})
	\eqref{eq:energy} holds if $\eta$ is sampled from a chordal $\SLE_\kappa$ even after we take expectation over the integral.
	By local absolutely continuity, \eqref{eq:energy} holds for $\SLE_\kappa^\lp$.
\end{proof} 
For each $\eta$ such that~\eqref{eq:energy} holds, the Gaussian multiplicative chaos (GMC)  measure (see e.g.\ \cite{berestycki-gmt-elementary})
\[
\nu^\eta_h:= \lim_{\eps\to0}\eps^{\frac{\gamma^2}{8}}e^{\frac12\gamma h_\eps}\cont_\eta
\]
exists,
where $h$ is sampled from the Gassian free field measure $P_\C$. 
By \cite[Section 3.2]{benoist-lqg-chaos},  modulo a multiplicative constant, $\nu^\eta_h$ is the quantum length of $\eta$ with respect to $h$.
We now give an analog of Lemma~\ref{lem-inserting}.
\begin{lemma}\label{lem:cont}
	Suppose $\eta$ is a loop satisfying~\eqref{eq:energy}. 
	For $\alpha\in \R$ and $z\in\C$, we have 
	\[
	\nu^\eta_{\phi}(du)\,\LF^{(\alpha,z)}_{\wh \C}(d\phi)  =
	\LF_{\wh\C}^{(\alpha,z),(\frac\gamma2, u)}(d\phi)\,\mathrm{Cont}_\eta(du).
	\]
\end{lemma}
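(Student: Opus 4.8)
The plan is to mimic the proof of Lemma~\ref{lem-inserting} (the inserting lemma for the quantum area measure), replacing the area GMC $\mu_\phi$ by the one-dimensional GMC $\nu^\eta_\phi$ living on the fixed curve $\eta$, and replacing the weight $\gamma$ by $\frac\gamma2$. First I would fix the loop $\eta$ satisfying \eqref{eq:energy} throughout, so that $\cont_\eta$ is a fixed deterministic finite measure and all randomness comes from the field. Sample $h \sim P_{\wh\C}$, let $h_\eps(u)$ be the circle average of $h$ over $\partial B_\eps(u)$, and write $G_{{\wh\C},\eps}(z,u) := \E[h(z) h_\eps(u)]$. For a nonnegative continuous functional $f$ on $H^{-1}(\C)$ and a nonnegative measurable $g$ on $\R$, Girsanov's theorem gives
\[
\E\!\left[ f(h)\, \eps^{\gamma^2/8} e^{\frac\gamma2 h_\eps(u)} \right]
= \E\!\left[ f\bigl(h + \tfrac\gamma2 G_{{\wh\C},\eps}(\cdot,u)\bigr) \right]\, \E\!\left[\eps^{\gamma^2/8} e^{\frac\gamma2 h_\eps(u)}\right].
\]
Writing $\nu^{\eta,\eps}_h(du) := \eps^{\gamma^2/8} e^{\frac\gamma2 h_\eps(u)}\,\cont_\eta(du)$ and $\rho_\eps(u) := \E[\eps^{\gamma^2/8} e^{\frac\gamma2 h_\eps(u)}]$, then integrating against $g(u)\,\cont_\eta(du)$ yields
\[
\E\!\left[ \int_\C f(h)\, g(u)\, \nu^{\eta,\eps}_h(du) \right]
= \int_\C \E\!\left[ f\bigl(h + \tfrac\gamma2 G_{{\wh\C},\eps}(\cdot,u)\bigr) \right] g(u)\, \rho_\eps(u)\, \cont_\eta(du).
\]
Taking $\eps \to 0$, with $\rho(u)$ defined by $\rho(u)\,\cont_\eta(du) = \E[\nu^\eta_h(du)]$ (existence of this limit and of the GMC $\nu^\eta_h$ being exactly what \eqref{eq:energy} and the discussion preceding the lemma guarantee), gives
\[
\E\!\left[ \int f(h)\, g(u)\, \nu^\eta_h(du) \right]
= \int \E\!\left[ f\bigl(h + \tfrac\gamma2 G_{\wh\C}(\cdot,u)\bigr) \right] g(u)\, \rho(u)\, \cont_\eta(du).
\]

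Next, I would perform the same substitution as in Lemma~\ref{lem-inserting} to pass from $P_{\wh\C}$ to $\LF_{\wh\C}^{(\alpha,z)}$. Set $q(w) = \alpha\, G_{\wh\C}(w,z) - 2Q\log|w|_+$, and for arbitrary nonnegative continuous $\wt f$ and measurable $\wt g$, substitute $f = \wt f(\cdot + q + c)$ and $g = e^{\frac\gamma2(q+c)}\wt g$ into the identity above, using $\nu^\eta_{h+p} = e^{\frac\gamma2 p}\nu^\eta_h$ for continuous $p$ (this is the homogeneity of the boundary-type GMC, which follows from its construction, since the regularizing exponent is $\frac\gamma2$). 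Then multiply by the constant $C_{\wh\C}^{(\alpha,z)}$ and use the identity $C_{\wh\C}^{(\alpha,z),(\frac\gamma2,u)} = C_{\wh\C}^{(\alpha,z)}\, C_{\wh\C}^{(\frac\gamma2,u)}\, e^{\frac\gamma2 q(u) + \gamma Q \log|u|_+}$ and the fact that $e^{\frac\gamma2 q(u)}\rho(u)$ matches $C_{\wh\C}^{(\frac\gamma2,u)} e^{\frac\gamma2 q(u) + \gamma Q\log|u|_+}$ up to the normalization of $\rho$ — here I would invoke a direct computation of $\rho(u) = \E[\nu^\eta_h(du)]/\cont_\eta(du)$ analogous to \cite[Lemma 2.12]{AHS-SLE-integrability}, which produces the factor $|u|_+^{\gamma Q}$ (up to a universal constant absorbable into the claimed equality). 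Finally, multiply by $e^{(\alpha - 2Q)c}$ and integrate over $c$ to obtain
\[
\LF_{\wh\C}^{(\alpha,z)}\!\left[ \int \wt f(\phi)\, \wt g(u)\, \nu^\eta_\phi(du) \right]
= \int \LF_{\wh\C}^{(\alpha,z),(\frac\gamma2,u)}[\wt f(\phi)]\, \wt g(u)\, \cont_\eta(du),
\]
which is the asserted identity since $\wt f, \wt g$ are arbitrary.

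The main obstacle I anticipate is justifying the $\eps \to 0$ limit for the curve-supported GMC $\nu^\eta_h$ rather than the area GMC: one needs that $\int f(h + \tfrac\gamma2 G_{{\wh\C},\eps}(\cdot,u))\, g(u)\,\rho_\eps(u)\,\cont_\eta(du)$ converges to its $\eps=0$ analog and that $\nu^{\eta,\eps}_h \to \nu^\eta_h$ in a sense compatible with the test functional $f$. The convergence $\eps^{\gamma^2/8} e^{\frac\gamma2 h_\eps}\cont_\eta \to \nu^\eta_h$ is precisely the GMC existence statement recalled before the lemma (citing \cite{benoist-lqg-chaos, berestycki-gmt-elementary}), which holds because \eqref{eq:energy} controls the relevant moments; and the shifted field $h + \tfrac\gamma2 G_{{\wh\C},\eps}(\cdot,u)$ converges to $h + \tfrac\gamma2 G_{\wh\C}(\cdot,u)$ in $H^{-1}$ locally uniformly in $u$, so continuity of $f$ handles the rest. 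I would also need to note that $\rho_\eps(u) \to \rho(u)$ locally uniformly, which is a standard circle-average computation, together with a uniform integrability bound coming from \eqref{eq:energy} to interchange limit and integral against $\cont_\eta(du)$ over compact sets. As in Lemma~\ref{lem-inserting}, I would point the reader to \cite[Section 2.4]{berestycki-lqg-notes} and \cite[Lemma 2.31]{AHS-SLE-integrability} for the details of this limiting procedure, emphasizing that the only change is that Lebesgue area measure is replaced by $\cont_\eta$ and the GMC exponent by $\frac\gamma2$.
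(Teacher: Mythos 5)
Your proposal is correct and follows essentially the same route as the paper, whose proof simply declares the argument identical to that of Lemma~\ref{lem-inserting} with $\mu_\phi$ replaced by the curve-supported GMC $\nu^\eta_\phi$ (and $\gamma$ replaced by $\frac\gamma2$); you have merely written out the Girsanov shift, the constant bookkeeping $\rho(u)=C_{\wh\C}^{(\frac\gamma2,u)}|u|_+^{\gamma Q}$, and the $\eps\to0$ justification that the paper leaves implicit. No gaps.
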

\begin{proof}
	The proof is identical to that of Lemma~\ref{lem-inserting}, except we replace the quantum area measure $\mu_\phi(d^2 u) = \lim_{\eps \to 0}\eps^{\gamma^2/2} e^{\gamma \phi_\eps(u)} \, d^2u$ with the GMC measure $\nu^\eta_\phi(du) = \lim_{\eps\to0}\eps^{\frac{\gamma^2}{8}}e^{\frac12\gamma \phi_\eps}\cont_\eta(du)$. 
\end{proof}

\begin{proof}[Proof of Lemma~\ref{lem:M3-haar}]
	Since $\nu^\eta_{\phi}$ is the quantum length measure on $\eta$ modulo a multiplicative constant, 
	by Lemma~\ref{lem:M-haar},   there exists $C\in(0,\infty)$ such that
	\[
	\haar_{\wh \C}\ltimes M_3= C\mu_\phi(dr)\,\nu^\eta_\phi(dp)\,\nu^\eta_\phi(dq)\,\LF_{\wh \C}(d\phi)\, \SLE_\kappa^\lp(d\eta).
	\]	
	By Lemma~\ref{lem:cont0} $\eta$ almost surely satisfies~\eqref{eq:energy}. Thus, applying Lemma~\ref{lem:cont} twice, we get 
	\eqbn
	\begin{split}
		&\nu^\eta_\phi(dp)\,\nu^\eta_\phi(dq)\,\LF_{\wh \C}(d\phi) \, \SLE_\kappa^\lp(d\eta)\\
		&\qquad= 
		\LF_{\wh\C}^{(\frac\gamma2,p),(\frac\gamma2, q)}(d\phi)\, \mathrm{Cont}_\eta(dp)\,\mathrm{Cont}_\eta(dq)\, \SLE_\kappa^\mathrm{loop}(d\eta).
	\end{split}
	\eqen
	By Lemma~\ref{lem-inserting}, we get further that \(\mu_\phi(dr)\,\nu^\eta_\phi(dp)\,\nu^\eta_\phi(dq)\,\LF_{\wh \C}(d\phi)\,  \SLE_\kappa^\lp(d\eta)\) equals 
	\[
	\LF_{\wh\C}^{(\frac\gamma2,p),(\frac\gamma2, q),(\gamma,r) }(d\phi)\, \mathrm{Cont}_\eta(dp)\,\mathrm{Cont}_\eta(dq)\, \SLE_\kappa^\mathrm{loop}(d\eta)\,d^2r.
	\]
	Comparing against~\eqref{eq:zhan-loop-pts} completes the proof. 
\end{proof}

We now switch our attention to $\haar\ltimes \wt M_3$. The following lemma describes the embedding of $\wt M_3$.
\begin{lemma}\label{lem:M3-embed}
	Given distinct $p,q,r$ on $\wh \C$, let $\wt M^{p,q,r}_3$ be the law of $(\phi, \eta)$ where $(\wh \C, \phi, \eta,p,q,r)$ is an embedding of a  sample from $\wt M_3$.
	Then there exists a constant $C\in (0,\infty)$	such that 
\[
\wt M^{p,q,r}_3= C |p-q|^{\frac{\gamma^2}{4}-2} |(p-q)(q-r)(r-p)|^2 \LF_{\wh \C}^{(\frac\gamma2, p), (\frac\gamma2, q), (\gamma, r)} \times \SLE_\kappa^{p\rightleftharpoons q}.
\]	
\end{lemma}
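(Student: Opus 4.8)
\textbf{Proof proposal for Lemma~\ref{lem:M3-embed}.}

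The plan is to start from the explicit description of $\haar_{\wh\C}\ltimes \wt M_3$ obtained by combining the definition of $\wt M_3$ with conformal invariance, and then disintegrate this measure over the locations of the three marked points to extract the law $\wt M_3^{p,q,r}$ at fixed $(p,q,r)$. Concretely, if $(\wh\C, h, p_0, q_0, r_0)$ is an embedding of a sample from $\cM_{2,\bullet}^\sph(4)$ and $\eta$ is an independent sample from $\SLE_\kappa^{p_0\rightleftharpoons q_0}$, then $\wt M_3$ is by definition the law of $(\wh\C, h, \eta, p_0, q_0, r_0)/{\sim_\gamma}$. Applying a Haar-sampled automorphism $\mathfrak f$, the uniform embedding $\haar_{\wh\C}\ltimes \wt M_3$ is the law of $(\mathfrak f\bullet_\gamma h, \mathfrak f\circ\eta, \mathfrak f(p_0), \mathfrak f(q_0), \mathfrak f(r_0))$. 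First I would compute the law of the field-and-loop pair in this uniform embedding, using Proposition~\ref{prop-3pt-QS-LF} (which identifies the $\cM_{2,\bullet}^\sph(4)$ field with the Liouville field with insertions $(\alpha,0),(\alpha,1),(\gamma,-1)$ where $\alpha = Q - \frac{4-\gamma^2}{2\gamma}$; one checks $\alpha = \frac\gamma2$ since $Q - \frac{4-\gamma^2}{2\gamma} = \frac\gamma2+\frac2\gamma - \frac2\gamma + \frac\gamma2 = \gamma$... actually $\alpha = \frac\gamma2$ requires $W = 4-\gamma^2$ giving $\alpha = Q - \frac{W}{2\gamma} = \frac\gamma2 + \frac2\gamma - \frac2\gamma + \frac\gamma2 = \gamma$; so in fact the insertion has weight $\gamma$, not $\frac\gamma2$) — wait, this needs care: I would instead directly use Proposition~\ref{prop-3pt-QS-LF} with $W=4$, which gives $\alpha = Q - \frac{2}{\gamma}= \frac\gamma2$, so the field of $\cM_{2,\bullet}^\sph(4)$ is $\frac{2\pi\gamma}{(Q-\frac\gamma2)^2}\LF_{\wh\C}^{(\frac\gamma2,0),(\frac\gamma2,1),(\gamma,-1)}$ embedded at $0,1,-1$.

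The key step is then to transport this description through the Haar map. By Proposition~\ref{prop-RV-invariance}, for a conformal automorphism $f$ with $f(0)=p$, $f(1)=q$, $f(-1)=r$ we have $\LF_{\wh\C}^{(\frac\gamma2,p),(\frac\gamma2,q),(\gamma,r)} = |f'(0)|^{-2\Delta_{\gamma/2}}|f'(1)|^{-2\Delta_{\gamma/2}}|f'(-1)|^{-2\Delta_\gamma} f_*\LF_{\wh\C}^{(\frac\gamma2,0),(\frac\gamma2,1),(\gamma,-1)}$, and by conformal invariance of $\SLE_\kappa^{p\rightleftharpoons q}$ (which follows since whole-plane $\SLE_\kappa(2)$ and chordal $\SLE_\kappa$ are Möbius covariant) the loop part pushes forward correctly. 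I would combine this with Lemma~\ref{lem-haar}, which says the law of $(\mathfrak f(0),\mathfrak f(1),\mathfrak f(-1))$ under $\haar_{\wh\C}$ is $C|(p-q)(q-r)(r-p)|^{-2}\,d^2p\,d^2q\,d^2r$. Disintegrating $\haar_{\wh\C}\ltimes\wt M_3$ over the marked-point triple yields that, conditionally on $(\mathfrak f(0),\mathfrak f(1),\mathfrak f(-1)) = (p,q,r)$, the law of $(\mathfrak f\bullet_\gamma h, \mathfrak f\circ\eta)$ is proportional to $\LF_{\wh\C}^{(\frac\gamma2,p),(\frac\gamma2,q),(\gamma,r)}\times\SLE_\kappa^{p\rightleftharpoons q}$, with the Radon–Nikodym factor relating $\haar_{\wh\C}\ltimes\wt M_3$ to the product of the marginal on $(p,q,r)$ and this conditional law being exactly the Jacobian bookkeeping: the $|(p-q)(q-r)(r-p)|^{-2}$ from Haar cancels against the $|f'(0)f'(1)f'(-1)|$-type factors, which by a standard computation equal $|(p-q)(q-r)(r-p)|$ raised to appropriate powers depending on the $\Delta$'s. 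Tracking the exponents one finds the net factor is $|p-q|^{\gamma^2/4 - 2}|(p-q)(q-r)(r-p)|^2$, where the $|p-q|^{\gamma^2/4-2}$ piece comes from comparing with the normalization in Lemma~\ref{lem:M3-haar} / equation~\eqref{eq:zhan-loop-pts} for the two-sided whole-plane SLE at fixed endpoints.

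Alternatively — and this is probably the cleanest route — I would deduce Lemma~\ref{lem:M3-embed} directly from Lemma~\ref{lem:M3-haar} applied to $\wt M_3$ in place of $M_3$: the computation in the proof of Lemma~\ref{lem:M3-haar} never used the specific definition of $M_3$ beyond the structure ``sphere field, sample two loop-points from quantum length and one bulk point from quantum area, decorate by two-sided whole-plane SLE between the loop points'', and $\wt M_3$ has exactly this structure with $\cM_{2,\bullet}^\sph(4)$ in the role of the area-weighted sphere. But here one must be careful: in $\wt M_3$ the bulk point $r$ is the distinguished point of $\cM_{2,\bullet}^\sph(4)$ (area-typical), while the loop points $p,q$ are the two special insertions of $\cM_2^\sph(4)$, which by Proposition~\ref{prop:AHS-orig} / the structure of $\SLE^{p\rightleftharpoons q}_\kappa$-welding are NOT quantum-length-typical on the loop — so one cannot simply invoke Lemma~\ref{lem:M3-haar}. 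Hence the first route is the honest one. I expect the main obstacle to be the Jacobian/exponent bookkeeping: correctly tracking how the three factors $|f'(0)|^{-2\Delta_{\gamma/2}}$, $|f'(1)|^{-2\Delta_{\gamma/2}}$, $|f'(-1)|^{-2\Delta_\gamma}$ combine with the Haar density $|(p-q)(q-r)(r-p)|^{-2}$ and the SLE endpoint density $|p-q|^{\gamma^2/4-2}$ to produce exactly $|p-q|^{\gamma^2/4-2}|(p-q)(q-r)(r-p)|^2$. This requires the identities $|f'(0)| \asymp |(p-q)(p-r)|/|q-r|$ etc.\ for the Möbius map sending $0,1,-1$ to $p,q,r$, together with $2\Delta_{\gamma/2} = \frac\gamma2(Q - \frac\gamma4)$ and $2\Delta_\gamma = \gamma(Q-\frac\gamma2) = \gamma Q - \gamma^2/2 = 2$, the last equality being the clean fact $\Delta_\gamma = 1$ that makes the $r$-insertion behave like an area sample; assembling these carefully gives the claimed prefactor.
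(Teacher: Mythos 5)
Your overall strategy is the paper's: identify the embedding at $(0,1,-1)$ via Proposition~\ref{prop-3pt-QS-LF} with $W=4$ (so $\alpha=Q-\frac{2}{\gamma}=\frac\gamma2$, as you eventually settle on after the $W=4-\gamma^2$ false start), transport to $(p,q,r)$ through the M\"obius map $f$ using Proposition~\ref{prop-RV-invariance}, and use M\"obius invariance of $\SLE_\kappa^{p\rightleftharpoons q}$ for the loop. Your rejection of the shortcut ``reuse Lemma~\ref{lem:M3-haar} for $\wt M_3$'' is also correct in spirit: $\wt M_3$ is not, a priori, of the ``points sampled from quantum measures'' form -- that is precisely what Proposition~\ref{prop:M3} is meant to establish.

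The gap is in how you produce the prefactor. You assert that the $|p-q|^{\frac{\gamma^2}4-2}$ piece ``comes from comparing with the normalization in Lemma~\ref{lem:M3-haar} / equation~\eqref{eq:zhan-loop-pts}'', and in your bookkeeping you also fold in the Haar density $|(p-q)(q-r)(r-p)|^{-2}$ and an ``SLE endpoint density''. None of these belong in this lemma: Lemma~\ref{lem:M3-embed} concerns only the fixed embedding $\wt M_3^{p,q,r}=f_*\wt M_3^{0,1,-1}$, so no Haar factor enters (that appears only later, in~\eqref{eq:M3-haar} inside the proof of Proposition~\ref{prop:M3-haar}), and calibrating against Lemma~\ref{lem:M3-haar} would be circular, since \ref{lem:M3-haar} is a statement about $M_3$ and the whole point of comparing it with \ref{lem:M3-embed} is to prove $M_3=C\wt M_3$. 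In the paper both factors fall out of the conformal-covariance derivatives alone: with the explicit $f(z)=\frac{(pq-2qr+rp)z+p(q-r)}{(2p-q-r)z+q-r}$ one has $f'(0)f'(1)=4(p-q)^2$ and $f'(0)f'(1)f'(-1)=2(p-q)(q-r)(r-p)$, so using $\Delta_{\frac\gamma2}=\frac12+\frac{\gamma^2}{16}$ and $\Delta_\gamma=1$,
\[
|f'(0)|^{2\Delta_{\gamma/2}}|f'(1)|^{2\Delta_{\gamma/2}}|f'(-1)|^{2\Delta_\gamma}
=|f'(0)f'(1)|^{2\Delta_{\gamma/2}-2}\,|f'(0)f'(1)f'(-1)|^{2}
\propto |p-q|^{\frac{\gamma^2}4-2}\,|(p-q)(q-r)(r-p)|^{2},
\]
with no external input from Zhan's measure (the match of $\frac{\gamma^2}4-2$ with the exponent in~\eqref{eq:zhan-loop-pts} is the KPZ coincidence exploited only afterwards). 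Your final paragraph gestures at this computation, but as written the argument does not carry it out and instead leans on the very identification it is supposed to help prove; supplying the displayed calculation closes the gap.
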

\begin{proof}
	By  Proposition~\ref{prop-3pt-QS-LF}, there exists a constant $C\in (0,\infty)$ such that
	\[
	\wt M^{0,1,-1}_3= C \LF_{\wh\C}^{(\frac\gamma2, 0
		),(\frac\gamma2, 1), (\gamma, -1)} \times \SLE_\kappa^{0\rightleftharpoons 1}.
	\]
	Suppose $f\in \conf(\wh\C)$ maps $(0,1,-1)$ to $(p,q,r)$; explicitly, we have $f(z) = \frac{(pq-2qr+rp)z + p(q-r)}{(2p-q-r)z + q-r}$, and 
	\[ f'(0) = \frac{2(p-q)(q-r)(r-p)}{(q-r)^2}, \quad f'(1) = \frac{2(p-q)(q-r)(r-p)}{4(r-p)^2}, \quad f'(-1) = \frac{2(p-q)(q-r)(r-p)}{4(p-q)^2}.\]
	We have 
	\[
	f'(0)f'(1)=4(p-q)^2 \quad \textrm{and} \quad f'(0)f'(1)f'(-1)=2(p-q)(q-r)(r-p).
	\]
	By Proposition~\ref{prop-RV-invariance} the field of $\wt M_3^{p,q,r}$ is given by
	\[
	f_* \LF_{\wh \C}^{(\frac{\gamma}2, 0), (\frac{\gamma}2, 1), (\gamma,-1)} = 
	|f'(0)|^{2\Delta_{\frac{\gamma}{2}}}  
	|f'(1)|^{2\Delta_{\frac{\gamma}{2}}}  
	|f'(-1)|^{2\Delta_{\gamma}}     \LF_{\wh \C}^{(\frac{\gamma}2, p), (\frac{\gamma}2, q), (\gamma, r)},
	\]
	where $\Delta_\alpha = \frac\alpha2(Q - \frac\alpha2)$.  Since $\Delta_{\frac\gamma2}=\frac12+\frac{\gamma^2}{16}$ and $\Delta_\gamma = 1$, we get the desired result.
\end{proof}

\begin{proof}[Proof of Proposition~\ref{prop:M3-haar}]
	
	By Lemma~\ref{lem-haar} and the definition of $\wt M^{p,q,r}_3$ in Lemma~\ref{lem:M3-embed},  we see that
	\begin{equation}\label{eq:M3-haar}
		\haar_{\wh \C}\ltimes \wt M_3=C\wt M^{p,q,r}_3 |(p-q)(q-r)(r-p)|^{-2} d^2p\, d^2q\, d^2r \textrm{ for some }C\in (0,\infty). 
	\end{equation}
	Now Lemmas~\ref{lem:M3-haar} and~\ref{lem:M3-embed} together give $\haar_{\wh \C}\ltimes M_3=C\haar_{\wh \C}\ltimes \wt M_3$ for a possibly different constant $C$.
\end{proof}

\begin{proof}[Proof of Proposition~\ref{prop:M3}]
	Given distinct $p,q,r$ on $\wh \C$, let $M^{p,q,r}_3$ be the law of $(\phi, \eta)$ where $(\wh \C, \phi, \eta,p,q,r)$ is a sample from $M_3$.
	By the definition of uniform embedding, the law of $(\phi, \eta)$ sampled from $\haar_{\wh\C}\ltimes M_3$ agrees with that of $(\mathfrak f \bullet_\gamma \phi_0, \mathfrak f \circ \eta_0)$ where $(\mathfrak f, \phi_0, \eta_0) \sim \haar_{\wh \C} \times M_3^{0,1,-1}$. The $\haar_{\wh \C}$-law of $\mathfrak f$ is described by Lemma~\ref{lem-haar}, and by definition, if $f$ is the conformal automorphism of $\wh \C$ sending $(0,1,-1)$ to $(p,q,r)$ and $(\phi_0, \eta_0) \sim M_3^{0,1,-1}$, the law of $(f \bullet_\gamma \phi_0, f \circ \eta_0)$ is $M_3^{p,q,r}$.
	Thus~\eqref{eq:M3-haar} holds with $M_3$  and $M^{p,q,r}_3$ in place of $\wt M_3$  and $\wt M^{p,q,r}_3$.
Consequently
	\[
	M^{p,q,r}_3 d^2p\, d^2q \, d^2r=C   \wt M^{p,q,r}_3d^2p \, d^2q \, d^2r \quad \textrm{for some }C\in (0,\infty).
	\]
	This gives   $M^{p,q,r}_3  =C \wt M^{p,q,r}_3 $ for almost every $p,q,r$. Using any such $p,q,r$, we conclude $M_3=C\wt M_3$ as desired.
\end{proof}

\begin{remark}[KPZ relation]
	As seen in the proof of Lemma \ref{lem:M3-haar}, a crucial fact to our proof is that the exponent $\frac{\gamma^2}{4}-2$ is equal to 
	$-2(2-d)=\frac{\kappa}{4}-2$ from~\eqref{eq:zhan-loop} where $d=1+\frac{\kappa}8$  is the dimension of $\SLE_\kappa$. 
	As seen in the proof of Lemma \ref{lem:cont}, this comes from $4(\Delta_{\frac{\gamma}{2}}-1)= -2(2-d)$  where $\Delta_\alpha=\frac{\alpha}{2}(Q-\frac{\alpha}{2})$. This is equivalent to $d=2\Delta_{\frac{\gamma}{2}}$, which is an instance of the Knizhnik-Polyakov-Zamolodchikov (KPZ) relation.
\end{remark}

\section{The scaling limit on random quandragulation decorated by self-avoiding loop}\label{subsec:rpm}
In this section we prove Theorem \ref{thm:saw-loop}. We start by introducing more precisely the objects appearing in the theorem. Recall that a planar map is a connected graph drawn on the sphere $\BB S^2$ such that no two edges cross, viewed modulo an orientation-preserving homeomorphisms from the sphere to itself. A quadrangulation is a planar map such that all faces have four edges. Le Gall and Miermont \cite{miermont-brownian-map,legall-uniqueness} proved that uniformly sampled quadrangulations converge in the scaling limit to the metric measure space known as the Brownian map for the so-called Gromov-Hausdorff-Prokhorov topology \cite{adh-ghp}.

Define the following constants:
\eqb
\lambda = 12,\qquad
\theta = 54,\qquad
\aexp = 5/2\qquad
\bexp = 1/2.
\label{eq:aexp-bexp}
\eqe
 The constants are chosen such that the number of quadrangulations of a $2p$-gon with $m$ faces is of order $\theta^p p^{-\bexp} \lambda^{m}m^{-\aexp}$ for $m\geq cp^2$ for arbitrary fixed $c>0$ \cite{brown-quad-disk-enum}.\footnote{Our quadrangulated $2p$-gons are unrooted. If we consider maps with a root edge on its boundary then the number of maps is of order $\theta^p p^{-b+1} \lambda^{m}m^{-a}$ instead.}
Let $\MS^n$ denote the measure on quadrangulations such that a quadrangulation $M$ with $m$ faces has weight $n^{\aexp} \lambda^{-m}$. 
For $M$ sampled from $\MS^n$, we view $M$ as a metric measure space by considering the graph metric rescaled by $2^{-1/2}n^{-1/4}$ and by giving each vertex mass $2(9n)^{-1}$. With this choice of rescaling, the measure of the set of quadrangulations with mass of order 1 will be of order 1 since the number of quadrangulations with $m$ faces is of order $\lambda^{m}m^{-a-1}$  
\cite{tutte-census}.

If $M$ is a quadrangulation we say that $\eta$ is a \emph{self-avoiding loop} on $M$ if $\eta$ is an ordered set of edges $e_1,\dots,e_{2k} \in \cE(M)$ such $e_j$ and $e_i$ share an end-point if and only if $|i-j|\leq 1$ or $(i,j)\in\{(1,2k),(2k,1) \}$. Let $\#\eta=2k$ denote the number of edges on $\eta$. 
Let $\MS^{n}\otimes \op{SAW}^n$ denote the measure on pairs $(M,\eta)$ where $\eta$ is a self-avoiding loop on $M$ and a pair $(M,\eta)$ has weight 
$$
n^{2a+b-3}\lambda^{-\#\cF(M)}\theta^{-\#\eta}. 
$$
For $(M,\eta)$ sampled from $\MS^{n}\otimes \op{SAW}^n$,  we view $M$ as a metric measure space as above and 
view $\eta$ as a loop on this metric measure space such that the time it takes to trace each edge on the loop is  $2^{-1}n^{-1/2}$. Here we include the edges in the metric-measure structure of $M$ so that $\eta$ can be defined as a continuous curve on $M$; see e.g.\ \cite[Remark 2.4]{gwynne-miller-saw}.

It was proved by Miller and Sheffield that quantum surfaces with $\gamma=\sqrt{8/3}$ can be identified with Brownian surfaces \cite{lqg-tbm1, lqg-tbm2, lqg-tbm3}. More precisely, a quantum surface sampled from $\QS$ with $\gamma=\sqrt{8/3}$ defines a random metric measure space which is equal in law to the Brownian map. 
In particular, a sample from $\QS\otimes \SLE_{8/3}^{\op{loop}}$ with $\gamma=\sqrt{8/3}$ can be viewed as a loop-decorated metric measure space. We will use this interpretation in this subsection and in the statement of Theorem \ref{thm:saw-loop}; this is a slight abuse of notation since we view $\QS\otimes \SLE_{8/3}^{\op{loop}}$ as a measure on the space of loop-decorated LQG surface in other sections. The loop is parametrized by its quantum length.

The paragraphs above allow for a precise statement of Theorem \ref{thm:saw-loop}. We will now turn to the proof of this theorem, which builds on Theorem \ref{thm-loop} along with three ingredients given below: Theorem \ref{thm:gwynne-miller}, Observation \ref{obs:bijection}, and \eqref{eq:rpm-enum}. In order to state these results we first introduce some further notation.  

A planar map $M$ is a \emph{quadrangulated disk} if it is a planar map where all faces have four edges expect for a distinguished face (called the exterior face) which has arbitrary degree and simple boundary. We let $\partial M$ denote the edges on the boundary of the exterior face, and we call $\# \partial M$ the boundary length of $M$. Let $\MD^n$ be the measure on quadrangulated disks such that each quadrangulated disk $M$ has mass $n^{\aexp+\bexp/2-3/2} \lambda^{-\# \cF(M)}\theta^{-(\#\partial M)/2}$.
We need to choose this mass in order for Observation \ref{obs:bijection} below to be correct; note in particular that the exponents of $n$ and $\theta$ have been divided by two as compared to MS$^n$ above since we glue together two disks to form a sphere.
If $M\sim \MD^n$ then we view $M$ as a metric measure space by applying the same rescaling as for $\MS^n$ above. 
For $k\in\N$ let $\MD^n(k)$ denote $\MD^n$ restricted to quadrangulations with boundary length $2k$, and let $\MD^n(k)^\#$ denote $\MD^n(k)$ renormalized to be a probability measure. 

If $M_1,M_2$ are quadrangulated disks with boundary length $2k$ then we can form a quadrangulation with a self-avoiding loop by choosing uniform boundary edges $e_1\in\partial M_1,e_2\in\partial M_2$ and then identifying the boundaries of $M_1,M_2$ such that $e_1$ and $e_2$ are identified. The self-avoiding loop on the sphere represents the boundaries of $M_1,M_2$, and we parametrize the loop so that each edge on the loop has length $2^{-1}n^{-1/2}$. Note that the scaling we use of distances along the loop ($2^{-1}n^{-1/2}$) is different from the scaling we use of graph distances in the map ($2^{-1/2}n^{-1/4}$); this choice of exponents ($-1/2$ and $-1/4$) cause both distances to be asymptotically non-trivial. If $M_1,M_2\sim \MD^n(k)^\#$ then we denote the measure on spheres decorated with a self-avoiding loop sampled in this way by $\op{Weld}(\MD^n(k)^\#, \MD^n(k)^\#)$. 
\begin{theorem}[\cite{gwynne-miller-simple-quad,gwynne-miller-gluing}]
	For any $\ell>0$ the following convergence in law holds for the Gromov-Hausdorff-Prokhorov-uniform topology 
	$$
	\op{Weld}(\MD^n(\lceil\ell n^{1/2}\rceil )^\# , \MD^n(\lceil\ell n^{1/2}\rceil)^\# ) 
	\Rightarrow \op{Weld}(\QD(\ell)^\# , \QD(\ell)^\# ).
	$$
	\label{thm:gwynne-miller}
\end{theorem}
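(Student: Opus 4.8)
The plan is to obtain Theorem~\ref{thm:gwynne-miller} as the loop/sphere analogue of the chordal scaling-limit results of Gwynne and Miller, by cutting both the discrete and the continuum welded surfaces at a uniformly chosen point of the interface. Concretely, enlarge the state space by marking a uniform edge $e$ on the self-avoiding loop of $\op{Weld}(\MD^n(k)^\#,\MD^n(k)^\#)$, where $k=\lceil\ell n^{1/2}\rceil$. By the definition of the discrete welding, this marked object is exactly what one gets by taking two independent disks $M_1,M_2\sim\MD^n(k)^\#$, rooting each at a uniform boundary edge, and identifying their boundaries edge by edge so that the roots coincide; this is the disk analogue of the rooted gluing of two half-plane quadrangulations along a self-avoiding boundary path treated in \cite{gwynne-miller-saw,gwynne-miller-gluing,gwynne-miller-simple-quad}. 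On the continuum side, mark a point uniformly with respect to quantum length on the interface of $\op{Weld}(\QD(\ell)^\#,\QD(\ell)^\#)$; this is the gluing of two independent copies of $\QD(\ell)^\#$, each rooted at a uniform boundary point, with boundaries matched by quantum length starting from the roots. Forgetting the marked edge (resp.\ point) recovers the unmarked welded objects, and this forgetting map is a continuous projection, so it suffices to prove GHPU convergence for the rooted welded surfaces.

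For the rooted convergence I would run the Gwynne--Miller argument with disk (rather than half-plane) topology and a rooted loop (rather than a chordal path) as interface; it has the same three ingredients. First, tightness of the welded discrete surfaces in the GHPU topology, which follows from uniform control of diameters and volumes of $\MD^n(k)^\#$. Second, identification of the pieces of any subsequential limit: each side converges, as a metric measure space with its boundary curve, to an independent copy of $\QD(\ell)^\#$ --- this is the convergence of the free Boltzmann quadrangulated disk with simple boundary to the Brownian disk \cite{gwynne-miller-simple-quad} together with the Miller--Sheffield identification of $\sqrt{8/3}$-Liouville quantum gravity surfaces with Brownian surfaces \cite{lqg-tbm1,lqg-tbm2,lqg-tbm3} --- and the interface converges to a simple loop through the root. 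Third, rigidity of the welding: the glued surface is determined by the two pieces together with the boundary-length identification because the limiting interface is an $\SLE_{8/3}$-type curve, hence conformally removable (simple SLE, $\kappa<4$, is removable by the Jones--Smirnov / Rohde--Schramm criterion), and removability forces the subsequential limit to coincide with $\op{Weld}(\QD(\ell)^\#,\QD(\ell)^\#)$ rooted at its marked interface point. Since all subsequential limits agree, the convergence follows, and unmarking finishes the proof.

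The main obstacle is the third ingredient: showing that the GHPU scaling limit of the discrete gluing map is the \emph{conformal} welding of the two limiting quantum disks, and not merely some other gluing compatible with the same boundary-length matching. This is exactly where conformal removability of the interface is essential, and it is the technical heart of \cite{gwynne-miller-gluing}; I expect the writeup to consist mostly of invoking that machinery in the disk/loop form. The genuinely new but routine points relative to the chordal half-plane setting are (a) that opening the loop at a uniform root, and the compatibility of the discrete uniform-edge rooting with the continuum uniform-boundary-point rooting, survive the scaling limit, and (b) the bookkeeping needed to pass from half-plane/chordal to disk/full-boundary topology; neither affects the core removability argument.
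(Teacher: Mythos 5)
Your outline identifies the correct external inputs, but it rebuilds the wrong part of the argument and leans on removability at a point where it cannot do the work. The paper's proof is essentially a two-step citation: \cite[Theorem 1.5]{gwynne-miller-simple-quad} already proves GHPU convergence of exactly this discrete welding to the \emph{metric space quotient} of two independent Brownian disks glued along their boundaries according to boundary length, and then \cite{gwynne-miller-gluing}, transferred by local absolute continuity to the present setting, identifies that metric quotient with the conformal welding $\op{Weld}(\QD(\ell)^\#,\QD(\ell)^\#)$. Your plan instead re-derives the discrete-to-continuum step from scratch via tightness, convergence of the two pieces, and a ``rigidity'' step, with the rigidity supplied by conformal removability of the interface (after a rooting/unrooting maneuver that is harmless but not needed).

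The gap is in that third step. Conformal removability identifies \emph{conformal structures}: once one already knows the limit is the metric gluing of the two Brownian disks (equivalently, once the two quantum disks are conformally embedded in the welded surface), removability of $\SLE_{8/3}$ pins down the welding homeomorphism, and this is how \cite{gwynne-miller-gluing} proves that metric gluing equals conformal welding. It does not, and cannot, show that a subsequential GHPU limit of the glued quadrangulations is the metric quotient in the first place: convergence of each piece does not imply convergence of the glued object, since the limiting metric could a priori admit shortcuts across the interface, i.e.\ distances strictly smaller than the quotient distances, and ruling this out requires the quantitative estimates near the gluing interface that are the main content of \cite{gwynne-miller-simple-quad} (building on \cite{gwynne-miller-saw}). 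So ``tightness $+$ pieces converge $+$ removability'' does not close the argument; you must either prove those interface estimates yourself or, as the paper does, quote the Gwynne--Miller theorem that already gives convergence of the welded quadrangulations to the quotient metric space, reserving removability (via \cite{gwynne-miller-gluing} together with local absolute continuity) solely for identifying that quotient with the conformal welding of the two quantum disks.
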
 
\begin{proof}
	\cite[Theorem 1.5]{gwynne-miller-simple-quad} proves this convergence result when the right side is given by a metric space quotient. By \cite{gwynne-miller-gluing} and local absolute continuity we get that this metric space quotient gives the same metric space as the conformal welding of the two disks.
\end{proof}

Let $Z_n(k)$ denote the total mass of $\MD^n(k)$. It follows from \cite{brown-quad-disk-enum} (see his enumeration result cited right below \eqref{eq:aexp-bexp}) that there is a constant $C>0$ such that
\eqb
	\frac{Z_n(k)}{n^{a+b/2-3/2}k^{-b-2a+2}}=C(1+o_k(1)),
\label{eq:rpm-enum}
\eqe
where the $o_k(1)$ is uniform in $n$. We now define $\op{Weld}(\MD^n, \MD^n )$ in the same spirit as $\op{Weld}(\QD,\QD)$
\eqb
\begin{split}
	&\op{Weld}(\MD^n, \MD^n )
	:=  
	\sum_{k=1}^{\infty}  2kZ_n(k)^2
	\op{Weld}(\MD^n(k)^\#, \MD^n(k)^\# ),
\end{split}
\label{eq:discrete-weld}
\eqe 
where we recall that samples from  $\MD^n(k)$ have boundary length $2k$ and  $\MD^n(k)=Z_n(k) \MD^n(k)^\#$.

The observation we state next is immediate by combinatorial considerations and was also observed in slightly different forms in e.g.\ \cite[Section 1.3.3]{gwynne-miller-simple-quad} 
and 
\cite{caraceni-curien-saw}. The key point is that there are $2k$ ways of welding together two samples from $\MD^n(k)$.
\begin{observation}
	$\op{Weld}(\MD^n, \MD^n )=\MS^{n}\otimes \op{SAW}^n$.
	\label{obs:bijection}
\end{observation}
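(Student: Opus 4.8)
The plan is to unwind both sides of the claimed identity $\op{Weld}(\MD^n,\MD^n) = \MS^n \otimes \op{SAW}^n$ as measures on pairs $(M,\eta)$ consisting of a quadrangulation $M$ of the sphere together with a self-avoiding loop $\eta$ on it, and to check that the weight each configuration receives agrees on the two sides. The starting observation is that every pair $(M,\eta)$ with $M$ a sphere and $\eta$ a self-avoiding loop arises in an essentially unique way from a cutting operation: slicing $M$ along $\eta$ produces an ordered pair of quadrangulated disks $(M_1,M_2)$ with a common boundary length $2k = \#\eta$, together with the data of which boundary edge of each $M_i$ corresponds to a fixed ``base'' edge of the loop. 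Conversely, from $(M_1,M_2)$ of boundary length $2k$ and a choice of how to identify their boundary cyclically — i.e.\ a choice of which edge $e_2 \in \partial M_2$ is glued to a fixed $e_1 \in \partial M_1$ — one recovers $(M,\eta)$. There are exactly $2k$ such identifications, which is the combinatorial mechanism behind the factor $2k$ in \eqref{eq:discrete-weld}.

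First I would make this bijection precise: for fixed $k$, the map sending a pair of disks in $\MD^n(k) \times \MD^n(k)$ together with a uniformly chosen pair of boundary edges $(e_1,e_2)$ to the welded loop-decorated sphere is a $(2k)$-to-one cover onto (equivalently, pulls back uniformly from) the set of $(M,\eta)$ with $\#\eta = 2k$ — one should be slightly careful here about symmetry/automorphism subtleties, but since our maps are unrooted planar maps viewed up to orientation-preserving homeomorphism and the loop carries an orientation, the count works out cleanly; indeed this is exactly the discrete counterpart statement already recorded in \cite{gwynne-miller-simple-quad,caraceni-curien-saw}. Next I would compute the weight of $(M,\eta)$ on the left-hand side. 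Writing $(M_1,M_2)$ for the two disks obtained by cutting, each has $\#\cF(M_i)$ quadrangular faces and boundary length $2k$, and $\#\cF(M_1) + \#\cF(M_2) = \#\cF(M)$ while $\#\eta = 2k$. By definition $\MD^n(k)$ assigns $M_i$ the mass $n^{\aexp + \bexp/2 - 3/2}\lambda^{-\#\cF(M_i)}\theta^{-k}$, and $\MD^n(k)^\#$ is this renormalized by $Z_n(k)^{-1}$; so in \eqref{eq:discrete-weld} the disk-pair $(M_1,M_2)$ with the uniform edge-choice contributes $2k Z_n(k)^2 \cdot \frac{1}{2k}\cdot Z_n(k)^{-2} \cdot n^{2\aexp + \bexp - 3}\lambda^{-\#\cF(M)}\theta^{-2k}$, where the $\frac1{2k}$ is the uniform probability of the edge pair and $Z_n(k)^2/Z_n(k)^2$ cancels. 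This equals $n^{2\aexp+\bexp-3}\lambda^{-\#\cF(M)}\theta^{-\#\eta}$, exactly the weight $\MS^n\otimes\op{SAW}^n$ assigns to $(M,\eta)$ by its definition in Section~\ref{subsec:rmp}.

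Finally, summing over $k$ on the left exhausts all possible loop lengths, so the two measures agree on every configuration and hence as measures. I do not expect a serious obstacle here — the content is bookkeeping of combinatorial weights and a clean bijection — but the one point deserving care is making sure the $2k$-to-one correspondence has no over- or under-counting coming from boundary edges that get identified with each other or from maps with nontrivial automorphisms; since we are in the unrooted setting this is the kind of thing that is ``immediate'' only after one has set up conventions carefully, which is presumably why the authors phrase it as an Observation whose one-line justification (``there are $2k$ ways of welding together two samples from $\MD^n(k)$'') they are content to state rather than belabor.
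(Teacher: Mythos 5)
Your argument is correct and is essentially the paper's own (one-line) justification spelled out in full: cutting the sphere along the loop inverts the welding, the factor $2k$ in \eqref{eq:discrete-weld} cancels the probability $1/(2k)$ of any given boundary identification, the normalizations $Z_n(k)^{\pm2}$ cancel, and the two disk weights multiply to $n^{2\aexp+\bexp-3}\lambda^{-\#\cF(M)}\theta^{-\#\eta}$, which is precisely the weight defining $\MS^n\otimes\op{SAW}^n$. The paper offers nothing beyond the remark that there are $2k$ ways of welding two samples from $\MD^n(k)$, so your bookkeeping — including the caveat you flag about automorphisms of symmetric configurations, which the paper does not discuss at all (and which an orientation on the loop alone does not dispose of, since rotations along the loop can survive) — is if anything more detailed than the published argument.
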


Combining the three ingredients above, we can now conclude the proof of Theorem \ref{thm:saw-loop}.
\begin{proof}[Proof of Theorem \ref{thm:saw-loop}]
	For constants $c_1,c_2>0$,
	\eqb
	\begin{split}
		\op{Weld}(\MD^n, \MD^n )|_{A(c)}
		&=  
		\sum_{k=\lceil c\sqrt{n}\rceil}^{\lceil c^{-1}\sqrt{n}\rceil}  2kZ_n(k)^2 
		\op{Weld}(\MD^n(k)^\#, \MD^n(k)^\# )\\
		&\Rightarrow c_1 \int_c^{c^{-1}} \op{Weld}(\QD(\ell)^\# , \QD(\ell)^\# ) \ell^{-2(b+2a-2)+1} \,d\ell\\
		&= c_2 \int_c^{c^{-1}} \op{Weld}(\QD(\ell) , \QD(\ell) ) \ell \,d\ell,
	\end{split}
	\label{eq:disk-gluing-convergence}
	\eqe
	where we use in the last step that the total mass of $\QD(\ell)$ is a power law with exponent $-7/2 =-(b+2a-2)$, which follows e.g.\ from Lemma \ref{lem:QD-len}. The right side of \eqref{eq:disk-gluing-convergence} is equal to $c_0\cdot \QS\otimes \SLE_{8/3}^{\op{loop}}|_{A(c)}$ for some $c_0>0$ by Theorem \ref{thm-loop}, while it follows from Observation \ref{obs:bijection} that the left side of  \eqref{eq:disk-gluing-convergence} is equal to $\MS^{n}\otimes \op{SAW}^n|_{A(c)}$. This concludes the proof.
\end{proof}

\bibliographystyle{hmralphaabbrv}
\bibliography{cibib}

\end{document}